\newcommand{\bburl}[1]{\textcolor{blue}{\url{#1}}}
\newtheorem{thm}{Theorem}[section]
\newtheorem{cor}[thm]{Corollary}
\newtheorem{lem}[thm]{Lemma}
\newtheorem{prop}[thm]{Proposition}
\newtheorem{defi}[thm]{Definition}
\newtheorem{rek}[thm]{Remark}
\DeclareMathOperator{\supp}{supp}
\numberwithin{equation}{section}
\newcommand{\Max}{\text{\rm MAX}}
\DeclareFontFamily{U}{mathx}{}
\DeclareFontShape{U}{mathx}{m}{n}{<-> mathx10}{}
\DeclareSymbolFont{mathx}{U}{mathx}{m}{n}
\DeclareMathAccent{\widehat}{0}{mathx}{"70}
\DeclareMathAccent{\widecheck}{0}{mathx}{"71}
\begin{document}

\title[Higher order Tsirelson Spaces  and their modified version]{Higher order Tsirelson Spaces  and their modified versions  are isomorphic}
\author{H\`ung Vi\d{\^e}t Chu}
\address{Department of Mathematics, Texas A\&M University, College
  Station, TX 77843, USA}
  \email{hungchu1@tamu.edu}
\author{Thomas Schlumprecht}
\address{Department of Mathematics, Texas A\&M University, College
  Station, TX 77843, USA and Faculty of Electrical Engineering, Czech
  Technical University in Prague, Zikova 4, 166 27, Prague, Czech
  Republic}
\email{t-schlumprecht@tamu.edu}
\thanks{The work was supported in part by  the grant DMS-2054443 from the National Science Foundation.}

\keywords{Tsirelson spaces of higher order, modified Tsirelson spaces, closed operator ideals}
\subjclass{Primary: 46B20. Secondary: 46B06, 46B25.}

\begin{abstract}
We prove that for every countable ordinal $\xi$, the Tsirelson's space $T_\xi$ of order  $\xi$, is 
naturally, i.e.,  via the identity, $3$-isomorphc to its modified version. For the first step, 
we prove that the Schreier family $\mathcal{S}_\xi$ is the same as its modified version $
\mathcal{S}^M_\xi$, thus answering a question by Argyros and Tolias. 
 As an application, we show that the algebra of linear bounded operators on $T_\xi$ has $2^{\mathfrak c}$ closed ideals.
\end{abstract}

\maketitle

\tableofcontents
\section{Introduction}

In 1974, Tsirelson  \cite{Tsirelson74}  constructed the first known space that does not contain any isomorphic copies of $\ell_p$, $1\leqslant p<\infty$, or $c_0$. Soon later,
Figiel and Johnson \cite{FigielJohnson74} introduced  the  dual of Tsirelson's original space, and showed that the  $p$-convexification of it  is a uniformly convex Banach space, not containing  any $\ell_q$,
$1<q<\infty$. Nowadays the space defined by Figiel and Johnson is called {\em Tsirelson space}, and denoted by $T$, while 
Tsirelson's original space is often denoted by $T^*$.

In 1976, Johnson  \cite{Johnson76} defined a modified version $T^M$  of $T$, nowadays  called the {\em modified Tsirelson space}, which also does not contain  $\ell_p$, $1\leqslant p<\infty$, or $c_0$, but has the additional property that for any $m\in \mathbb N$, there is a co-finite dimensional subspace $Y$ in which every $m$-dimensional subspace is 2-isomorphic to a subspace of a finite-dimensional subspace 
$F$ of $Y$ which is $2$-isomorphic to $\ell_1^{\mbox{dim}(F)}$.
Later,  Casazza and Odell \cite{CasazzaOdell82/83} showed that the norms on  $T^M$ and $T$ are actually equivalent. 

The space $T$ is a reflexive space with a $1$-unconditional basis $(e_i)$, which has the property that the averages of any normalized block basis are not converging in norm to zero. It is therefore a reflexive example of a space in which a weakly null sequence may not have  Ces\'aro summing subsequences. In \cite{AlspachArgyros92}, Alspach and Argyros introduced {\em iterated averages 
of order $\xi$}, $\xi$ being a countable ordinal, and constructed reflexive spaces $T_\xi$ for which 
 the iterated averages of order $\xi$ do not converge to $0$. We refer to these spaces as {\em higher order Tsirelson spaces}.
As in the base case, we can also define their modified version. The main goal of this paper is to show that 
Tsirelson spaces of any order $\xi<\omega_1$ are isomorphic (via the identity) to their modified version. This extends 
 a result by Manoussakis  \cite{Manoussakis04} who proved this result for finite ordinals $\xi$.

Our paper is organized as follows: in Section 2, we recall Schreier families  $\mathcal{S}_\xi$ of order $\xi<\omega_1$, as well as their modified version $\mathcal{S}^M_\xi$,  $\omega_1$ denoting the first uncountable ordinal. The main goal of Section 3, is to show the equality of 
  $\mathcal{S}_\xi$ and  $\mathcal{S}^M_\xi$.
  In Section 4, we recall the definition of  Tsirelson spaces  $T_\xi$ of order  $\xi<\omega_1$,  and their modified versions $T^M_\xi$, and it is shown that their norms are equivalent.
   Section 5 is concerned with the cardinality of the closed ideals of the algebra of bounded 
   linear operators on $T_\xi$.

 \section{Schreier families of higher order}\
In this section, we recall  the definition of  \textit{Schreier families}    of order $\xi$, where $\xi$ is a countable ordinal, and their modified version. 
We introduce some notation: $[\mathbb{N}]$,   $[\mathbb{N}]^{<\omega}$,  and $[\mathbb{N}]^{\omega}$ denote the subsets, the finite subsets, and the infinite subsets of the natural numbers $\mathbb{N}$.
For $A\in [\mathbb{N}]^{<\omega}$ and $B\in[\mathbb{N}]$, we write $A < B$, if $a< b$ for all $a\in A$ and $b\in B$. 
For $A\in [\mathbb{N}]$ and $k\in \mathbb{N}$, we let 
$A_{>k}=A\cap\{k+1,k+2,\ldots\}$, $A_{\geqslant k}=A\cap\{k,k+1,k+2,\ldots\}$, $A_{<k}=A\cap\{1,2,\ldots, k-1\}$, and 
$A_{\leqslant k}=A\cap\{1,2,\ldots, k\}$.
  As a matter of convention, we put $\max \emptyset = 0$ and $\min \emptyset = \infty$,
  and thus, $A < \emptyset$ and $A > \emptyset$ for all $A \in [\mathbb{N}]^{<\omega}$.
  For $m\in\mathbb{N}$, we write $m\leqslant A$ (or $m<A$, respectively), if 
 $m\leqslant \min A$ (or $m<\min A$, respectively).
 
 Identifying  elements of  $[\mathbb{N}]$ in the canonical way with $\{0,1\}$-valued functions on $\mathbb{N}$ and thus elements of  $\{0,1\}^\mathbb{N}$, we equip $[\mathbb{N}]$ with the product
  topology of the discrete topology on $\{0,1\}$.
 We say that $\mathcal{F}\subset[\mathbb{N}]^{<\omega}$  is  \textit{compact} if it is compact in that product
topology. 
We say that $\mathcal{F}\subset[\mathbb{N}]^{<\omega}$  is  \textit{hereditary} if $A\in \mathcal{F}$ and $B\subset A$ imply that $B\in\mathcal{F}$.
Given $a_1 < \cdots < a_n, b_1 < \cdots < b_n$ in $\mathbb{N}$, we say that $\{b_1,\dots,b_n\}$
is \textit{a spread} of $\{a_1,\ldots,a_n\}$ if $a_i\leqslant b_i$ for all $i \in\{1,\ldots,n\}$. A family $\mathcal{F}\subset[\mathbb{N}]^{<\omega}$ is called
\textit{spreading} if every spread of every element of $\mathcal{F}$ is also in
$\mathcal{F}$. A family that is compact, hereditary, and spreading is said to be \textit{regular}.

We now define the Schreier family $\mathcal{S}_\xi\subset [\mathbb{N}]^{<\omega}$ for each $\xi < \omega_1$ by transfinite induction. Particularly,
$$\mathcal{S}_0\ :=\ \{\{n\}\,:\, n\in \mathbb{N}\}\cup \{\emptyset\}.$$
Assuming that $\mathcal{S}_\gamma$ has been defined for all $\gamma < \xi$. If $\xi = \gamma + 1$, we let 
\begin{equation}\label{e00}\mathcal{S}_\xi \ =\ \left\{\bigcup_{i=1}^n E_i\,:\, n\leqslant E_1 < E_2< \cdots < E_n, E_i\in \mathcal{S}_\gamma, i = 1, 2, \ldots, n\right\}.\end{equation}
If $\xi$ is a limit ordinal, we choose a fixed sequence of successor ordinals $(\alpha(\xi, n))_{n=1}^\infty\subset [1, \xi)$, which increases to $\xi$ and put
\begin{equation}\label{e12}\mathcal{S}_{\xi}\ = \ \{E: \exists n\ \leqslant\ E\mbox{ with }E\in \mathcal{S}_{\alpha(\xi, n)}\}.\end{equation}
The sequence $(\alpha(\xi, n))_{n=1}^\infty$ is called a $\xi$-approximating sequence. As needed later in Section \ref{S:6},
it follows by transfinite induction that $\mathcal{S}_1\subset \mathcal{S}_\xi$ for all $\xi<\omega_1$. Furthermore, we can and will later put additional conditions on $(\alpha(\xi, n))_{n=1}^\infty$.

The standard transfinite induction gives that the families $\mathcal{S}_\xi$, for $\xi < \omega_1$, are regular, and they are almost increasing in the following sense: for all ordinals $\gamma < \xi$,
\begin{equation}\label{e10}
\mbox{ there is an }m\in \mathbb{N}\mbox{ so that if }A\in \mathcal{S}_\gamma\mbox{ and }\min A\geqslant m,\mbox{ then }A\in \mathcal{S}_\xi. 
\end{equation}
In the case of a limit ordinal $\xi$, Property \eqref{e10} enables us to require the following  additional property on a $\xi$-approximating sequence $(\alpha(\xi, n))_{n=1}^\infty$:
\begin{align}\label{e11}
&\alpha(\xi,1)=1,\notag\\
 &\text{and for all  }n\in \mathbb{N},\text{ if }A, B\in \mathcal{S}_{\alpha(\xi, n)}\mbox{ with }1 < A < B,\mbox{ then }A\cup B \in \mathcal{S}_{\alpha(\xi, n+1)}.
\end{align}
Indeed, suppose that we have a limit ordinal $\xi$ and have chosen approximating sequences $(\alpha(\gamma,n))_{n=1}^\infty$ for all limit ordinals $\gamma<\xi$. We first choose an arbitrary  $\xi$-approximating sequence $(\alpha'(\xi, n))_{n=1}^\infty$. We shall build a new $\xi$-approximating sequence $(\alpha(\xi, n))_{n=1}^\infty$ from $(\alpha'(\xi, n))_{n=1}^\infty$
be choosing $\ell_1<\ell_2<\ell_3<\cdots $ recursively so that 
$$\{ A\cup B: A,B\in \mathcal{S}_{\alpha'(\xi,n)+\ell_n}, 1<A<B\} \ \subset\ \mathcal{S}_{\alpha'(\xi,n+1)+\ell_{n+1}},$$ and then putting $\alpha(\xi,n)=\alpha'(\xi,n)+\ell_1$.

Set $\ell_1 = 0$. Assume that $\ell_n$ has been defined. Consider $\alpha'(\xi, n+1) + \ell_n >  \alpha'(\xi, n) + \ell_n =\alpha(\xi, n) $. By \eqref{e10}, there exists $k_n\in \mathbb{N}$ such that $A_{\geqslant k_n}\in \mathcal{S}_{\alpha'(\xi, n+1) + \ell_n}$, for any $A\in \mathcal{S}_{\alpha(\xi, n)}$.
 Thus for any two sets $1 < A < B$ in $\mathcal{S}_{\alpha(\xi, n)} = \mathcal{S}_{\alpha'(\xi, n)+\ell_n}$
it follows that  $U := A_{\geqslant k_n}\cup B_{\geqslant k_n}\in \mathcal{S}_{\alpha'(\xi, n+1) + \ell_n+1}$. By \eqref{e00}, $\{2,3,\ldots, k_n-1\}\in\mathcal{S}_{\alpha'(\xi, n+1) + k_n-3}$; hence, 
$V:=A_{<k_n}\cup B_{<k_n}\in  \mathcal{S}_{\alpha'(\xi, n+1) + k_n-3}$. Since $V < U$ are in $\mathcal{S}_{\alpha'(\xi, n+1) + \max\{k_n-3, \ell_n+1\}}$, we know that $U\cup V\in \mathcal{S}_{\alpha'(\xi, n+1) + \max\{k_n-2, \ell_n+2\}}$. Therefore, if we set $\ell_{n+1} = \max\{k_n-2, \ell_n+1\}+1$ and $\alpha(\xi, n+1) = \alpha'(\xi, n+1) + \ell_{n+1}$. The new $\xi$-approximating sequence $(\alpha(\xi, n))_{n=1}^\infty$ increases to $\xi$ and satisfies $\eqref{e11}$.

As a consequence of \eqref{e11}, for a $\xi$-approximating sequence $(\alpha(\xi, n))_{n=1}^\infty$, we have 
${\mathcal S}_{\alpha(\xi, m)}\subset{\mathcal S}_{\alpha(\xi, n)}$ whenever $m\leqslant n$, which, in combination with \eqref{e12}, gives a neat way to write $\mathcal{S}_{\xi}$ as
\begin{equation}\label{e14}\mathcal{S}_{\xi}\ =\ \{E\,:\, \exists n\leqslant \min E \mbox{ with }E\in \mathcal{S}_{\alpha(\xi, n)}\}\ =\ \{E\,:\,E\in \mathcal{S}_{\alpha(\xi, \min E)}\}.\end{equation}

For $\xi < \omega_1$, we denote by MAX$(\mathcal{S}_\xi)$ the maximal elements of $\mathcal{S}_\xi$. Since $\mathcal{S}_\xi$ is compact, every element of $\mathcal{S}_\xi$ is a subset of some maximal element of $\mathcal{S}_\xi$ (see \cite[Section 2]{BrenchFerencziTcaciuc20} for details). 

\begin{prop}
If $0 < \xi = \gamma + 1 < \omega_1$ and $A\in\Max(\mathcal{S}_\xi)$, then there are unique elements $A_i\in \Max(\mathcal{S}_\gamma), i = 1, 2, \ldots, \min A$, so that 
$$A_1 \ <\ A_2 \ <\ \cdots\ <\ A_{\min A}\mbox{ and }A\ =\ \cup_{i=1}^{\min A}A_i.$$
\end{prop}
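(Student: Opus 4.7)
The plan is to construct the decomposition greedily from the enumeration of $A$ and to exploit maximality of $A$ in $\mathcal{S}_\xi$ to force the blocks to be maximal in $\mathcal{S}_\gamma$ and the count to equal $\min A$. Enumerate $A=\{a_1<\cdots<a_N\}$ and set $m=\min A=a_1$. Let $A_1=\{a_1,\dots,a_{k_1}\}$ with $k_1$ the largest index for which $\{a_1,\dots,a_{k_1}\}\in\mathcal{S}_\gamma$, and inductively let $A_i$ be the longest initial segment of $A\setminus\bigcup_{j<i}A_j$ lying in $\mathcal{S}_\gamma$; this gives $A=A_1\cup\cdots\cup A_p$ with $A_1<\cdots<A_p$ and each $A_i\in\mathcal{S}_\gamma$. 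A routine induction shows that the greedy count never exceeds the number of blocks in any $\mathcal{S}_\gamma$-representation of $A$ (the greedy $A_1$ already contains the first block of any such representation, and one recurses on $A\setminus A_1$), so $A\in\mathcal{S}_\xi$ yields $p\le m$.

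The technical heart of the argument is the following sublemma on regular families: \emph{if $F\in\mathcal{S}_\gamma\setminus\Max(\mathcal{S}_\gamma)$, then $F\cup\{N\}\in\mathcal{S}_\gamma$ for every $N>\max F$.} For a witness $k<\max F$, a spread of $F\cup\{k\}$ that shifts the entries at and above $k$ up along $F$ and fills the vacated top slot with $N$ yields $F\cup\{N\}\in\mathcal{S}_\gamma$ directly. For a witness $k>\max F$, one argues by transfinite induction on $\gamma$: in the successor case, the existence of an $\mathcal{S}_\gamma$-representation of $F\cup\{N\}$ depends only on $F$ (either append $\{N\}$ as a new block, or merge $N$ into the last block via the inductive sublemma applied to that block), and hence is independent of $N$; the limit case reduces to the successor case through \eqref{e14}. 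Granting the sublemma, suppose some greedy $A_i$ were not in $\Max(\mathcal{S}_\gamma)$. If $i<p$, applying the sublemma with $N=a_{k_i+1}>\max A_i$ gives $A_i\cup\{a_{k_i+1}\}\in\mathcal{S}_\gamma$, contradicting the greedy choice of $k_i$. If $i=p$, any $N>\max A$ yields $A_p\cup\{N\}\in\mathcal{S}_\gamma$, and then $A\cup\{N\}=A_1\cup\cdots\cup A_{p-1}\cup(A_p\cup\{N\})$ is an $\mathcal{S}_\xi$-representation into $p\le m=\min(A\cup\{N\})$ blocks, contradicting $A\in\Max(\mathcal{S}_\xi)$. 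Hence every $A_i\in\Max(\mathcal{S}_\gamma)$; the same appending trick (adjoin a new singleton block $\{N\}$) forbids $p<m$, so $p=m$.

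Uniqueness is straightforward. Let $A=\bigcup_{i=1}^m A_i=\bigcup_{i=1}^m B_i$ be two such decompositions. Both chains partition $A$ in order from $\min A$, so each of $A_1$ and $B_1$ is an initial segment of $A$ and one contains the other; say $A_1\subseteq B_1$. Hereditarity gives $A_1\cup\{b\}\in\mathcal{S}_\gamma$ for every $b\in B_1\setminus A_1$, contradicting $A_1\in\Max(\mathcal{S}_\gamma)$ unless $A_1=B_1$. Iterating on $\bigcup_{i\ge 2}A_i=\bigcup_{i\ge 2}B_i$ forces $A_i=B_i$ for every $i$. I expect the sublemma on upward extension of non-maximal sets in $\mathcal{S}_\gamma$ to be the main obstacle: it is what synchronises the local greedy choices inside $A$ with the global maximality of $A$ in $\mathcal{S}_\xi$, and it is the only step where the recursive definition of $\mathcal{S}_\gamma$ enters substantively.
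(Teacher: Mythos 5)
Your proof is correct, but it takes a genuinely different path from the paper's. The paper starts from an arbitrary admissible $\mathcal{S}_\gamma$-decomposition $A=\bigcup_{i=1}^n A_i$ provided by membership in $\mathcal{S}_\xi$, shows $n=\min A$ by a singleton-appending argument, and then shows each $A_i$ is maximal via a \emph{shift} replacement (if $A_j$ is not maximal and $j<\min A$, set $A'_j=A_j\cup\{\min A_{j+1}\}$, and slide the minimum of each subsequent block into the previous one, finally adding a new element $m>A$ at the far end). You instead build a canonical \emph{greedy} decomposition into longest initial segments lying in $\mathcal{S}_\gamma$, and you isolate as a sublemma the fact that a non-maximal $F\in\mathcal{S}_\gamma$ satisfies $F\cup\{N\}\in\mathcal{S}_\gamma$ for every $N>\max F$. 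This sublemma is the real content: it is \emph{not} a formal consequence of compactness, hereditarity and spreading alone (for instance $\mathcal{F}=\{\emptyset\}\cup\{\{n\}:n\ge 1\}\cup\{\{n,m\}:n\ge 1,\ m\ge 100\}$ is regular, $\{1\}$ is non-maximal, yet $\{1,2\}\notin\mathcal{F}$), and your transfinite induction on $\gamma$ through the recursive structure of $\mathcal{S}_\gamma$ is exactly what is needed. In fact the paper's ``Clearly, $A'_j\in\mathcal{S}_\gamma$'' for $i=j$ in the case $j<\min A$ tacitly uses the same fact when the witness of non-maximality happens to be above $\max A_j$ but below $\min A_{j+1}$ is not larger, so your explicit sublemma is arguably a more careful treatment of the one genuinely delicate point. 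What the paper's route buys is brevity: no greedy algorithm, no count-comparison induction, and the shift trick dispatches the maximality of all blocks at once. What your route buys is a reusable lemma and a canonical decomposition from which both existence and uniqueness drop out cleanly. One stylistic caveat: your phrase ``the existence of an $\mathcal{S}_\gamma$-representation of $F\cup\{N\}$ depends only on $F$ \ldots\ and hence is independent of $N$'' is loose; what the induction actually shows is that any admissible representation of $F\cup\{k\}$ converts into one of $F\cup\{N\}$ by either replacing the terminal singleton block $\{k\}$ with $\{N\}$ or applying the sublemma at level $\eta$ to the last block $E_n\setminus\{k\}$, and for completeness you should note that $N>\max F=\max(E_n\setminus\{k\})$ so the inductive hypothesis does apply.
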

\begin{proof}
Pick $A\in \Max(\mathcal{S}_\xi)$. By definition, we can write $A = \cup_{i=1}^n A_i$, for $n\leqslant A_1 < \cdots < A_n$, and $A_i$ are nonempty sets in $\mathcal{S}_\gamma$. We shall show that $n = \min A$, and each $A_i$ is in $\Max(\mathcal{S}_\gamma)$.

Observe that $\min A_1 = \min A$, so $n\leqslant \min A$. If $n < \min A$, then find some $m > A_n$ to have 
$$n+1\ \leqslant\ A_1 < \cdots < A_n < A_{n+1}\ :=\ \{m\}$$
By definition, the set $A' = \cup_{i=1}^{n+1} A_i\in \mathcal{S}_\xi$ and $A\subsetneq A'$, which contradicts the maximality of $A$. Hence, $n = \min A$.

We claim that every $A_i$ is in $\Max(\mathcal{S}_\gamma)$. Otherwise, let $j$ be the smallest such that $A_j\notin \Max(\mathcal{S}_\gamma)$. If $j = \min A$, then form $A'_{\min A} := A_{\min A}\cup \{m\}\in \mathcal{S}_\gamma$, for some $m > A$. Then the set $\cup_{i=1}^{j-1}A_i \cup A'_{\min A}$ is in $\mathcal{S}_{\xi}$, again contradicting the maximality of $A$. If $j < \min A$, we form 
\begin{align*}
&A'_j\ :=\ A_j\cup \{\min A_{j+1}\}\\
&A'_i \ :=\ (A_i\backslash \{\min A_i\})\cup \{\min A_{i+1}\}\mbox{ for }j\ <\ i\ \leqslant\ \min A-1,\mbox{ and }\\
&A'_{\min A}\ :=\ (A_{\min A}\backslash \{\min A_{\min A}\})\cup \{m\}, \mbox{ for some }m > A.
\end{align*}
Clearly, $A'_i\in \mathcal{S}_\gamma$ for $i = j, \ldots, \min A$. Hence,
$$\left(\bigcup_{i=1}^{j-1} A_i\right)\cup\left(\bigcup_{i=j}^{\min A} A'_i\right)\ \in\ \mathcal{S}_\xi,$$
contradicting the maximality of $A$.

Finally, the uniqueness of the decomposition is obvious. Indeed, suppose that $A = \cup_{i=1}^{\min A} A_i = \cup_{i=1}^{\min A} A'_i$, where $A_1 < \cdots < A_{\min A}\in \Max(\mathcal{S}_\gamma)$ and $A'_1 < \cdots < A'_{\min A}\in \Max(\mathcal{S}_\gamma)$. Let $j$ be the smallest (if exists) such that $A_j\neq A'_j$. Then either $A_j\subsetneq A'_j$ or $A'_j\subsetneq A_j$. Either way violates that both $A_j$ and $A_j'$ are in MAX$(\mathcal{S}_\gamma)$. Therefore, $A_j = A_j'$ for $j = 1, \ldots, \min A$. 
\end{proof}

\begin{prop}
If $\xi < \omega_1$ is a limit ordinal, then $A\in \Max(\mathcal{S}_\xi)$ if and only if $A\in \Max(\mathcal{S}_{\alpha(\xi,\min A)})$.
\end{prop}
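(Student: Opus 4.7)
The strategy is to transfer maximality between $\mathcal{S}_\xi$ and $\mathcal{S}_{\alpha(\xi,n)}$, where $n=\min A$, using two ingredients already on the table: the reformulation \eqref{e14}, namely $\mathcal{S}_\xi=\{E:E\in \mathcal{S}_{\alpha(\xi,\min E)}\}$, and the monotonicity $\mathcal{S}_{\alpha(\xi,m)}\subset \mathcal{S}_{\alpha(\xi,n)}$ for $m\leqslant n$, which was recorded just before \eqref{e14} as a consequence of \eqref{e11}. In both directions I would begin by naming $n=\min A$ and immediately reading off $A\in\mathcal{S}_{\alpha(\xi,n)}$ from \eqref{e14}.

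The reverse implication is short: assuming $A\in\Max(\mathcal{S}_{\alpha(\xi,n)})$, \eqref{e14} yields $A\in\mathcal{S}_\xi$; if some $B\supsetneq A$ were to lie in $\mathcal{S}_\xi$, then $\min B\leqslant n$, so \eqref{e14} applied to $B$ together with the monotonicity above would force $B\in \mathcal{S}_{\alpha(\xi,n)}$, contradicting maximality of $A$ there.

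The forward implication carries the real content. From $A\in\Max(\mathcal{S}_\xi)$ and \eqref{e14} one has $A\in \mathcal{S}_{\alpha(\xi,n)}$, and the task is to promote this to maximality in $\mathcal{S}_{\alpha(\xi,n)}$. I would argue by contrapositive: if $A\subsetneq B\in \mathcal{S}_{\alpha(\xi,n)}$, pick $c\in B\setminus A$, so that $A\cup\{c\}\in \mathcal{S}_{\alpha(\xi,n)}$ by heredity. The key step is to convert this extension into one that preserves $\min A=n$. For this I would write $A=\{a_1<\cdots<a_s\}$, choose any $m>\max(A\cup\{c\})$, and observe that $(a_1,\ldots,a_s,m)$ dominates the sorted tuple of $A\cup\{c\}$ coordinatewise, so $A\cup\{m\}$ is a spread of $A\cup\{c\}$ and therefore lies in $\mathcal{S}_{\alpha(\xi,n)}$. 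Since $\min(A\cup\{m\})=n$, \eqref{e14} places $A\cup\{m\}$ in $\mathcal{S}_\xi$, contradicting $A\in\Max(\mathcal{S}_\xi)$. The only step requiring any care is this coordinatewise domination, which reduces to a case split on the position at which $c$ is inserted into the sorted order of $A$ (whether $c<n$, $n\leqslant c\leqslant \max A$, or $c>\max A$); each case is a one-line check, and I expect no further obstacles.
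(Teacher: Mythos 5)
Your proof is correct and follows essentially the same route as the paper: read off $A\in\mathcal{S}_{\alpha(\xi,\min A)}$ from \eqref{e14}, and transfer maximality back and forth between $\mathcal{S}_\xi$ and $\mathcal{S}_{\alpha(\xi,\min A)}$ by extending a non-maximal set and using the monotonicity ${\mathcal S}_{\alpha(\xi, m)}\subset{\mathcal S}_{\alpha(\xi, n)}$ for $m\leqslant n$. One small difference: in the forward direction the paper extends the non-maximal set to $A\cup\{\max A+1\}$, which quietly invokes the fact (true for Schreier families but not for arbitrary regular families) that a non-maximal set can always be extended by the very next integer; you instead take $m>\max(A\cup\{c\})$ so that $A\cup\{m\}$ is a genuine spread of $A\cup\{c\}$, which is self-contained and needs nothing beyond heredity and spreading. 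Both work, and your version is, if anything, a touch more careful.
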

\begin{proof}
Let $A\in \Max(\mathcal{S}_\xi)$. By \eqref{e14}, $A\in \mathcal{S}_{\alpha(\xi, \min A)}$. If $A\notin \Max(\mathcal{S}_{\alpha(\xi, \min A)})$, then form $A' := A\cup \{\max A + 1\}$, which must still be in $\mathcal{S}_{\alpha(\xi, \min A)} = \mathcal{S}_{\alpha(\xi, \min A')}$. Hence, $A'\in \mathcal{S}_\xi$, contradicting the maximality of $A$. 

Conversely, let $A\in \Max(\mathcal{S}_{\alpha(\xi,\min A)})$. Then $A\in \mathcal{S}_\xi$. If $A\notin \Max(\mathcal{S}_\xi)$, then $A' := A\cup \{\max A + 1\}\in \mathcal{S}_\xi$. By the definition of $\mathcal{S}_\xi$, $A'\in \mathcal{S}_{\alpha(\xi, \min A')} = \mathcal{S}_{\alpha(\xi, \min A)}$, thus contradicting the maximality of $A$. 
\end{proof}

It is convenient to  define a $\xi$-approximating sequence $(\alpha(\xi, n))_{n=1}^\infty$ also for a successor ordinal $\xi < \omega_1$ by simply letting $\alpha(\xi, n) = \xi$ for all $n\in \mathbb{N}$. Thus, it follows that $A\in \mathcal{S}_{\alpha(\xi, \min A)}$ for $A\in \mathcal{S}_\xi$ (whether $\xi$ is a limit ordinal or not), and we have the following unified recursive definition of $\mathcal{S}_{\xi}$:
$$\mathcal{S}_{\xi}\ =\ \left\{\bigcup_{i=1}^{\min A_1} A_i\,:\, A_1 < \cdots < A_{\min A_1}\mbox{ and }A_i\in \mathcal{S}_{\alpha(\xi,\min A_1)-1}, i = 1, \ldots, \min A_1\right\},$$
where $\gamma-1$ is the predecessor for a sucessor ordinal $\gamma$. Moreover, if $A\in \Max(\mathcal{S}_\xi)$, then this decomposition into increasing sets $A_i\in \Max(\mathcal{S}_{\alpha(\xi,\min A_1)-1})$ is unique.

We conclude this section by introducing the \textit{modified Schreier families}, which are defined using transfinite induction for all $\xi < \omega_1$, as follows
$$\mathcal{S}_0^M \ =\ \mathcal{S}_0\ =\ \{\{n\}\,:\, n\in \mathbb{N}\}\cup \{\emptyset\}.$$
Assuming that $\mathcal{S}_\gamma^M$ has been defined for all $\gamma < \xi$. If $\xi = \gamma + 1$, we let
$$\mathcal{S}_\xi^M= \left\{\bigcup_{i=1}^n E_i\,:\, \mbox{sets }E_i\mbox{'s are pairwise disjoint, } E_i\in \mathcal{S}_\gamma^M, \mbox{ and }E_i\geqslant n, i = 1,\ldots, n\right\}.$$
If $\xi$ is a limit ordinal, we choose a fixed sequence $(\alpha(\xi, n):n\in \mathbb{N})\subset [1,\xi)$ which increases to $\varepsilon$ and put
$$\mathcal{S}^M_{\xi} = \{E\,:\, \exists n\ \leqslant\ \min E \mbox{ with }E\in \mathcal{S}^M_{\alpha(\xi, n)}\}.$$

\section{Equality of the families $\mathcal{S}_\xi$ and $\mathcal{S}^M_\xi$, $\xi<\omega_1$}
In this section, we answer a question stated in \cite[pp.\ 16]{ArgyrosTolias04} by showing that for any $\xi < \omega_1$, the Schreier family $\mathcal{S}_\xi$ of order $\xi < \omega_1$ coincides with its modified version $\mathcal{S}^M_\xi$. This result was independently obtained in \cite{CauseyPelczar-Barwacz23}.
 We would like to give here a different, self-contained proof and lay the foundation for the next section, where we prove that the spaces  $T_\xi$ and $T^M_\xi$ are naturally (via the formal identity) isomorphic. More precisely, we will prove Theorem \ref{m1}, from which, it will easily follow that $\mathcal{S}_\xi= \mathcal{S}_\xi^M$, for $\xi<\omega_1$. Theorem \ref{m1} will also be used in Section \ref{iso}.

\begin{defi}[The $\xi$-analysis of maximal $\mathcal{S}_\xi$ sets]\normalfont

Given $\xi < \omega_1$, we define recursively the $\xi$-analysis $T(\xi, A)$ of a maximal $\mathcal{S}_\xi$ set $A$ or \textit{the $\xi$-analysis tree of $A$}, for short.  We also define for any $D\in T(\xi, A)$, the \textit{$\xi$-order of $D$ inside $A$}, denoted by $\mbox{order}_{\xi, A}(D)$, which  is an ordinal not larger than $\xi$. 

If $\xi = 0$ and $A = \{a\} \in \Max(\mathcal{S}_0)$,  put $T(0, A) = \{A\}$ and order$_{\xi, A}(A) = 0$.
Assume that $T(\gamma, A)$ has been defined for all $\gamma < \xi$, and  $A\in \Max(\mathcal{S}_\gamma)$, and for all $D\in T(\gamma, A)$, we have defined the ordinal $\mbox{order}_{\gamma, A}(D)\leqslant \gamma$. We define $T(\xi, A)$ for $A\in \mathcal{S}_\xi$.

\noindent{\bf Case 1:} $\xi$ is a successor ordinal. Write $\xi = \gamma + 1$. For $A\in \Max(\mathcal{S}_\xi)$, write $A = \cup_{i=1}^{\min A}A_i$, where $A_1 < \cdots < A_{\min A}$ and $A_i\in \Max(\mathcal{S}_\gamma)$. Then 
$$T(\xi, A)\ =\ \{A\}\cup \bigcup_{i=1}^{\min A}T(\gamma, A_i),\mbox{ and }$$
$$\mbox{order}_{\xi, A}(A) \ =\ \xi, \mbox{ and }\mbox{order}_{\xi, A}(D)\ =\ \mbox{order}_{\gamma, A_i}(D),$$
for $D\in T(\gamma, A_i)$ and $i = 1, \ldots, \min A$. 

\noindent{\bf Case 2:} $\xi$ is a limit ordinal. For $A\in \Max(\mathcal{S}_\xi)$, let $T(\xi, A) = T(\alpha(\xi, \min A), A)$, and $\mbox{order}_{\xi, A}(D) = \mbox{order}_{\alpha(\xi, \min A), A}(D)$ for all $D\in T(\xi, A)$.  
\end{defi}

\begin{rek}\normalfont
For $\xi<\omega_1$ and $A\in \Max(\mathcal{S}_\xi)$, we make the following observations:
\begin{enumerate}
\item If $\xi$ is a successor ordinal or $0$, we have $\mbox{order}_{\xi, A}(A) = \xi$; if $\xi$ is a limit ordinal, we have $\mbox{order}_{\xi, A}(A) = \alpha(\xi, \min A)$, and $T(\xi, A) = T(\alpha(\xi, \min A), A)$.
\item For every $D \in T(\xi, A)$, $\mbox{order}_{\xi, A}(D)$ is a successor ordinal, and furthermore, $D\in \Max(\mathcal{S}_{\mbox{order}_{\xi, A}(D)})$. 
\item If $D, E\in T(\xi, A)$ and $D\subsetneq E$, then $\mbox{order}_{\xi, A}(D) < \mbox{order}_{\xi, A}(E)$.
\item $T(\xi, A)$ is a tree with respect to inclusion, i.e., $D$ is a successor of $E$ if $D\subsetneq E$. In this tree, $A$ is the root, while the leaves (or terminal nodes) consist of all the singletons $\{a\}$, for $a\in A$. 
\item Let $D\in T(\xi, A)$ be not a terminal node. Denote the set of immediate successors of $D$ in $T(\xi, A)$ by $\mbox{succ}_{\xi}(D)$, i.e., 
$\mbox{succ}_\xi (D) =$ 
$$\{E\in T(\xi, A)\,:\, E\subsetneq D\mbox{ and }\not\exists E'\in T(\xi, A)\mbox{ s.t. }E\subsetneq E'\subsetneq D\}.$$
Then $D = \cup_{E\in \mbox{succ}(D)} E$. If we write $\mbox{order}_{\xi, A}(D) = \gamma + 1$, then $\mbox{succ}(D)$ consists of $\min D$ elements $D_1 < D_2 < \cdots < D_{\min D}$, where $D_i\in \Max(\mathcal{S}_\gamma)$. Then there are two cases: 
\begin{itemize}
\item either $\gamma$ is a successor ordinal, in which case $\mbox{order}_{\xi, A}(D_i) = \gamma$.
\item or $\gamma$ is a limit ordinal, in which case, we have $D_i\in \Max(\mathcal{S}_{\alpha(\gamma, \min D_i)})$ and $\mbox{order}_{\xi, A}(D_i)\ =\ \alpha(\gamma, \min D_i)$ for $i=1,2,\ldots, \min D$.
\end{itemize}
\item For $D\in T(\xi, A)$, $T(\mbox{order}_{\xi, A}(D), D)$ is a subtree of $A$, and
$$T(\mbox{order}_{\xi, A}(D), D)\ =\ \{E\in T(\xi, A)\,:\, E\subset D\}.$$
\item Let $D, E\in T(\xi, A)$, $D\neq E$, then there are only four possibilities:
\begin{itemize}
\item $D\subsetneq E$, and order$_{\xi, A}(D) < \mbox{order}_{\xi, A}(E)$,
\item $E\subsetneq D$, and order$_{\xi, A}(E) < \mbox{order}_{\xi, A}(D)$,
\item $E < D$,
\item $D < E$.
\end{itemize}
\end{enumerate} 
\end{rek}

We define for $\xi < \omega_1$, $A\in \Max(\mathcal{S}_\xi)$, and $a\in A$, \textit{the $\xi$-order of $a$ inside $A$}:
$$\mbox{order}_{\xi, A}(a)\ =\ \max\{\mbox{order}_{\xi, A}(D)\,:\, D\in T(\xi, A)\mbox{ and }a\ =\ \min D\}.$$
Roughly speaking, to find $\mbox{order}_{\xi, A}(a)$, we look for the set $D$ closest to the root and whose minimum is $a$, then set $\mbox{order}_{\xi, A}(a) =  \mbox{order}_{\xi, A}(D)$. Note that for every $a\in A$, we have $\{a\}\in T(\xi, A)$, so $\mbox{order}_{\xi, A}(a)$ is well-defined, and $\mbox{order}_{\xi, A}(a) = 0$ if and only if $a$ is not the minimum of any other node of $T(\xi, A)$. For $a\in A$, let $D_{\xi, A}(a)$ be the unique set $D$ closest to the root (possibly the root itself) and whose minimum is $a$. If $E\in T(\xi, A)$ and $a\in E$ with $a > \min E$, then $D_{\xi, A}(a) = D_{\mbox{order}_{\xi, A}(E),E}(a)$ and $\mbox{order}_{\xi, A}(a) = \mbox{order}_{\mbox{order}_{\xi, A}(E),E}(a)$.

\begin{lem}[The replacement lemma]\label{repl} Let $\xi < \omega_1$, $A\in \Max(\mathcal{S}_\xi)$, and $a\in A$ with $a > \min A$.  Put 
\begin{align*}
m_1 &\ =\ \max\{\max D\,:\, D\in T(\xi, A), D < D_{\xi, A}(a)\},\mbox{ and }\\
m_2 &\ =\ \min\{\min D\,:\, D\in T(\xi, A), D > D_{\xi, A}(a)\},
\end{align*}
where $\min \emptyset = \infty$ in the case there is no node after $D_{\xi, A}(a)$. Let $E$ be the immediate predecessor of $D_{\xi, A}(a)$ in $T(\xi, A)$. Write $\mbox{order}_{\xi, A}(E) = \gamma + 1$. Then for every $\widetilde{D}\in \Max(\mathcal{S}_\gamma)$ with $\widetilde{D}\subset (m_1, m_2)$, we have
$$\widetilde{A}\ :=\ (A\backslash D_{\xi, A}(a))\cup \widetilde{D}\ \in\ \Max(\mathcal{S}_\xi).$$
\end{lem}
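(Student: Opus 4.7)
I would prove the lemma by transfinite induction on $\xi<\omega_1$. The base case $\xi=0$ is vacuous: $\Max(\mathcal{S}_0)$ consists of singletons, so the hypothesis $a>\min A$ cannot be satisfied. For the inductive step, I split into the successor and limit cases, using the two propositions and the explicit recursive description of $T(\xi,A)$.

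In the successor case $\xi=\gamma_0+1$, the first proposition gives a unique decomposition $A=A_1\cup\dots\cup A_{\min A}$ with $A_1<\dots<A_{\min A}$ and each $A_i\in\Max(\mathcal{S}_{\gamma_0})$, and $T(\xi,A)=\{A\}\cup\bigcup_i T(\gamma_0,A_i)$. Since $a>\min A$, the set $D_{\xi,A}(a)$ lies in a single branch $T(\gamma_0,A_k)$, where $k$ is determined by $a\in A_k$. If $a=\min A_k$, then $D_{\xi,A}(a)=A_k$ and $E=A$, so $\gamma=\gamma_0$; in this situation $m_1=\max A_{k-1}$ and $m_2=\min A_{k+1}$ (with the obvious conventions at the boundary), and replacing $A_k$ by $\widetilde D$ immediately produces an increasing decomposition of $\widetilde A$ into $\min A$ members of $\Max(\mathcal{S}_{\gamma_0})$ starting at $\min A$, hence $\widetilde A\in\Max(\mathcal{S}_\xi)$. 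If instead $a>\min A_k$, then $D_{\xi,A}(a)=D_{\gamma_0,A_k}(a)$ and $E$ is the corresponding predecessor inside $T(\gamma_0,A_k)$, so the inductive hypothesis applies at level $\gamma_0$, provided the gap constraints match. I would verify that the values $m_1,m_2$ computed inside the big tree $T(\xi,A)$ agree with (or are tighter than) the values $m_1',m_2'$ computed inside $T(\gamma_0,A_k)$: since the singletons $\{\min A_k\}$ and $\{\max A_k\}$ are leaves of $T(\gamma_0,A_k)$ and dominate whatever comes from the other branches $A_j$ ($j\neq k$), the outer bounds reduce to the inner ones or are even stricter. By induction, $\widetilde A_k:=(A_k\setminus D_{\xi,A}(a))\cup\widetilde D\in\Max(\mathcal{S}_{\gamma_0})$. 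Finally, since $\min A_k<a$ implies $\min\widetilde A_k=\min A_k$, and $\max\widetilde D<m_2\leqslant\min A_{k+1}$, the sets $A_1,\dots,A_{k-1},\widetilde A_k,A_{k+1},\dots,A_{\min A}$ form a valid maximal $\mathcal{S}_\xi$-decomposition of $\widetilde A$.

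In the limit case, the second proposition identifies $\Max(\mathcal{S}_\xi)$-sets with $\Max(\mathcal{S}_{\alpha(\xi,\min A)})$-sets, and moreover $T(\xi,A)=T(\alpha(\xi,\min A),A)$ by construction, so the quantities $m_1,m_2,\gamma$ are the same whether computed with respect to $\xi$ or $\alpha(\xi,\min A)$. Applying the induction hypothesis at $\alpha(\xi,\min A)<\xi$ gives $\widetilde A\in\Max(\mathcal{S}_{\alpha(\xi,\min A)})$. Because $a>\min A$ forces $\min A\in A\setminus D_{\xi,A}(a)\subseteq\widetilde A$, and because $m_1\geqslant\min A$ (the singleton $\{\min A\}$ is a leaf of $T(\xi,A)$ lying below $D_{\xi,A}(a)$), the minimum is preserved: $\min\widetilde A=\min A$. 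The second proposition, read in the other direction, then yields $\widetilde A\in\Max(\mathcal{S}_\xi)$.

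The main obstacle, I expect, is the bookkeeping at the interface in the successor subcase $a>\min A_k$: one must confirm that the outer values $m_1,m_2$ of the replacement lemma, computed over all of $T(\xi,A)$, reduce to those one would compute inside the subtree $T(\gamma_0,A_k)$, so that the induction hypothesis is genuinely applicable and $\widetilde D$ sits cleanly in the position of $D_{\xi,A}(a)$ without colliding with the neighboring pieces $A_{k-1}$ and $A_{k+1}$. Everything else is either an appeal to the two earlier propositions or a direct verification that $\widetilde A$ decomposes in the way required by the recursive definition of $\mathcal{S}_\xi$.
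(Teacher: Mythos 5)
Your proof is correct and takes essentially the same route as the paper: both reduce to the subtree $T(\gamma_0,A_k)$ rooted at the piece $A_k$ of the canonical decomposition that contains $a$, and both exploit the fact that the bounds $m_1,m_2$ computed in the full tree $T(\xi,A)$ imply the corresponding bounds computed inside the subtree. The only structural difference is the choice of inductive parameter: the paper inducts on the number of predecessors of $D_{\xi,A}(a)$ (its depth in the tree), whereas you induct transfinitely on $\xi$; since the reduction simultaneously decreases both quantities, the two schemes are interchangeable, and your explicit separation of the limit case, which the paper handles implicitly through the identity $T(\xi,A)=T(\alpha(\xi,\min A),A)$, is if anything a slightly cleaner presentation.
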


\begin{proof}
We prove the lemma by induction on $n$, the number of predecessors of $D_{\xi, A}(a)$.

\noindent Base case: $n = 1$. Let $\xi < \omega_1$, $A\in \Max(\mathcal{S}_{\xi})$, and $a\in A$ with $a > \min A$. 
Since $n = 1$, $A$ is the immediate predecessor of $D_{\xi, A}(a)$ in $T(\xi, A)$. Write $A = \cup_{i=1}^{\min A} A_i$, where $\min A \leqslant A_1 < A_2 < \cdots < A_{\min A}$ and $A_i\in \Max(\mathcal{S}_\gamma)$. For some $i_0 = 2, 3, \ldots, \min A$, we have $A_{i_0} = D_{\xi, A}(a)$. Note that $i_0\neq 1$ because $a > \min A$. We have
$$A_1 \ <\ A_2 \ <\ \cdots \ <\ A_{i_0-1} \ <\ \widetilde{D}\ <\ A_{i_0+1}\ <\ \cdots \ <\ A_{\min A}.$$
It may be possible that in the above chain, $\widetilde{D}$ is the last set, which happens exactly when $i_0 = \min A$. By definition, 
$$\widetilde{A} \ =\ (A\backslash D_{\xi, A}(a))\cup\widetilde{D}\ =\ \bigcup_{i=1, i\neq i_0}^{\min A}A_i\cup \widetilde{D}\ \in\ \Max(\mathcal{S}_\xi).$$

\noindent Inductive hypothesis: assume that the lemma is true for $n$ predecessors and all $\xi < \omega_1$. 

Let us fix $\xi < \omega_1$, $A\in \Max(\mathcal{S}_{\xi})$, and $a\in A$ with $a > \min A$. Suppose that $D_{\xi, A}(a)$ has $n+1$ predecessors $(E_i)_{i=1}^{n+1}$ in $T(\xi, A)$ with
$$D_{\xi, A}(a)\ =\ E_0\ \subsetneq\ E_1\ \subsetneq\ E_2\ \subsetneq\ \cdots\ \subsetneq\ E_n\ \subsetneq\ E_{n+1} \ =\ A.$$
Put 
\begin{align*}
m_1 &\ =\ \max\{\max D\,:\, D\in T(\xi, A), D < D_{\xi, A}(a)\},\mbox{ and }\\
m_2 &\ =\ \min\{\min D\,:\, D\in T(\xi, A), D > D_{\xi, A}(a)\}.
\end{align*}
Choose $\widetilde{D}\subset (m_1, m_2)$ such that $\widetilde{D}\in \Max(\mathcal{S}_{\mbox{order}_{\xi, A}(a)})$.

Let $\xi = \gamma+1$ and consider the tree $T(\gamma, E_n)$. Put 
\begin{align*}
m'_1 &\ =\ \max\{\max D\,:\, D\in T(\gamma, E_n), D < D_{\gamma, E_n}(a)\},\mbox{ and }\\
m'_2 &\ =\ \min\{\min D\,:\, D\in T(\gamma, E_n), D > D_{\gamma, E_n}(a)\}.
\end{align*}
Since $D_{\gamma, E_n}(a) = D_{\xi, A}(a)$ and $T(\gamma, E_n)$ is a subtree of $T(\xi, A)$, we know that 
$$m'_1\ \leqslant\ m_1 < m_2 \ \leqslant\ m_2'.$$
Furthermore, $a > \min E_n$ because $D_{\xi, A}(a)$ has at least two predecessors. Applying the inductive hypothesis to the tree $T(\gamma, E_n)$ and $a\in E_n$, we have
$$(E_n\backslash D_{\xi, A}(a))\cup \widetilde{D}\ =\ (E_n\backslash D_{\gamma, E_n}(a))\cup \widetilde{D}\ \in\ \Max(\mathcal{S}_\gamma).$$
It is worth noting that due to how $m_1'$ and $\widetilde{D}$ are chosen, $(E_n\backslash D_{\xi, A}(a))\cup \widetilde{D}$ and $E_n$ have the same minimum. Write $A = \cup_{i=1}^{\min A}A_i$, for $\min A\leqslant A_1 < \cdots < A_{\min A}$ and $A_i\in \Max(\mathcal{S}_{\gamma})$. Let $A_{i_0} = E_n$. We have
$$A_1 \ <\ \cdots\ <\ A_{i_0-1} \ <\ (E_n\backslash D_{\xi, A}(a))\cup \widetilde{D}\ <\ A_{i_0+1}\ <\ \cdots < A_{\min A}.$$
These sets are in $\Max(\mathcal{S}_\gamma)$, and the transformation does not change the minimum of $A_{i_0}$. Therefore, $$(A\backslash D_{\xi, A}(a))\cup \widetilde{D} \ =\ \left(\cup_{i=1, i\neq i_0}^{\min A} A_i\right) \cup ((E_n\backslash D_{\xi, A}(a))\cup \widetilde{D})\ \in\ \Max(\mathcal{S}_\xi).$$
This completes our proof. 
\end{proof}

\begin{thm}\label{m1}
Let $\xi < \omega_1$. If $A_1, \ldots, A_n$, $n\in \mathbb{N}$, are pairwise disjoint, nonempty sets in $\mathcal{S}_\xi$ with 
$\min A_1 < \cdots < \min A_n$, then there exist nonempty sets $A_1' < \cdots < A_n'$ in $\mathcal{S}_\xi$ such that 
$\cup_{i=1}^n A_i = \cup_{i=1}^n A_i'$ and $\min A_i\leqslant \min A_i'$, for $i = 1, \ldots, n$. 
\end{thm}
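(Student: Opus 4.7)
My plan is to prove the theorem by transfinite induction on $\xi<\omega_1$. The base case $\xi=0$ is immediate, since nonempty elements of $\mathcal{S}_0$ are singletons, which are already linearly ordered by their minima.

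For the successor step $\xi=\gamma+1$, I would write each $A_i$ as $\bigcup_{j=1}^{m_i}B_{i,j}$ with $B_{i,1}<\cdots<B_{i,m_i}$ in $\mathcal{S}_\gamma$ and $m_i\leqslant\min A_i$. Disjointness of the $A_i$'s passes to the full collection $\{B_{i,j}\}_{i,j}$, so I would enumerate all $M:=\sum_i m_i$ blocks in increasing order of minimum as $C_1,\ldots,C_M$, apply the inductive hypothesis at level $\gamma$ to produce $C'_1<\cdots<C'_M$ in $\mathcal{S}_\gamma$ with the same union and $\min C_k\leqslant\min C'_k$, and regroup as $A'_i=\bigcup_{k=s_i+1}^{s_i+m_i}C'_k$ where $s_i=m_1+\cdots+m_{i-1}$. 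The key counting step is that at most $s_i$ blocks $B_{i',j}$ have minimum strictly below $\min A_i$, because for $i'\geqslant i$ every element of $A_{i'}$ is at least $\min A_{i'}\geqslant\min A_i$. Consequently $\min A'_i=\min C'_{s_i+1}\geqslant\min C_{s_i+1}\geqslant\min A_i$, which verifies the minimum condition and simultaneously gives $m_i\leqslant\min A'_i$, placing $A'_i$ in $\mathcal{S}_{\gamma+1}$.

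For the limit step, \eqref{e14} gives $A_i\in\mathcal{S}_{\alpha(\xi,\min A_i)}$, and the monotonicity of $(\mathcal{S}_{\alpha(\xi,n)})_n$ that follows from \eqref{e11} places all the $A_i$ in the common level $\beta:=\alpha(\xi,\min A_n)<\xi$. Applying the inductive hypothesis at $\beta$ yields $A'_1<\cdots<A'_n$ in $\mathcal{S}_\beta$ satisfying the union and minimum conditions. The main difficulty, and where I expect the genuine work to lie, is then upgrading membership from $\mathcal{S}_\beta$ to $\mathcal{S}_\xi$: by \eqref{e14}, one needs $A'_i\in\mathcal{S}_{\alpha(\xi,\min A'_i)}$, and for $i<n$ the level $\alpha(\xi,\min A'_i)$ can be strictly smaller than $\beta$.

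To overcome this I would prove a strengthened inductive statement in which each $A_i$ is assumed to lie in its own family $\mathcal{S}_{\eta_i}$ with $\eta_1\leqslant\cdots\leqslant\eta_n$ drawn from the approximating sequence of $\xi$, and we produce $A'_i\in\mathcal{S}_{\eta_i}$. This strengthened version then implies the limit case of the theorem by taking $\eta_i:=\alpha(\xi,\min A_i)$ and using monotonicity to convert $A'_i\in\mathcal{S}_{\eta_i}$ into $A'_i\in\mathcal{S}_{\alpha(\xi,\min A'_i)}$. The strengthened statement itself should follow by adapting the successor-case block decomposition at each $A_i$'s individual level $\eta_i-1$ (each $\eta_i$ being a successor ordinal by \eqref{e11}), relying on the tree-analysis framework and the Replacement Lemma proved above to perform localized surgery on blocks whose minima need to be raised without disturbing the ambient Schreier-family membership.
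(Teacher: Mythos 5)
Your base case and successor step are correct, and in fact the successor step is cleaner than the paper's. The paper proves the $n=2$ case (Step~1) by a greedy maximal-block refill and then runs a separate inner induction on $n$ (Step~3); you instead decompose every $A_i$ into $\mathcal{S}_\gamma$-blocks simultaneously, reorder all $M=\sum m_i$ blocks by minimum, invoke the transfinite inductive hypothesis at $\gamma$ for this whole family, and regroup the resulting $C'_k$ into consecutive runs of the original lengths $m_i$. The key counting observation you make --- that at most $s_i=m_1+\cdots+m_{i-1}$ blocks have minimum below $\min A_i$, so $\min C'_{s_i+1}\geqslant\min C_{s_i+1}\geqslant\min A_i\geqslant m_i$ --- is exactly what is needed, and it yields general $n$ directly without either the greedy filling or the inner induction. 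This is a genuine, correct simplification.

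The limit step, however, is not a proof: it is a plan with a gap right at the crux of the theorem. You correctly diagnose the problem --- applying the inductive hypothesis at $\beta=\alpha(\xi,\min A_n)$ produces $A'_i\in\mathcal{S}_\beta$, and for $i<n$ one cannot downgrade this to $A'_i\in\mathcal{S}_{\alpha(\xi,\min A'_i)}$ since $\alpha(\xi,\min A'_i)$ may be strictly smaller than $\beta$. But the proposed remedy (a strengthened statement with $A_i\in\mathcal{S}_{\eta_i}$, $\eta_1\leqslant\cdots\leqslant\eta_n$, producing $A'_i\in\mathcal{S}_{\eta_i}$) is itself unproven, and the one sentence you offer --- ``adapting the successor-case block decomposition at each $A_i$'s individual level $\eta_i-1$ \ldots relying on \ldots the Replacement Lemma'' --- does not specify how the recursion would close. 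If you decompose each $A_i$ into blocks at its own level $\eta_i-1$, the resulting blocks live in different families $\mathcal{S}_{\eta_i-1}$, and after re-sorting them by minimum the level assignment is no longer monotone in the minimum, so the strengthened inductive hypothesis no longer applies in the needed form. It is also not clear how the Replacement Lemma, which performs a single-node substitution inside one maximal $\mathcal{S}_\xi$ tree, would be invoked on a family of $n$ sets at heterogeneous levels.

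For comparison, the paper's limit step (Step~2) is a localized, iterated surgery argument rather than a strengthened induction. Given $A\in\Max(\mathcal{S}_\xi)$ and $B$ interleaving with it, it locates the first element $a\in A$ with $a>\min B$, identifies the node $D_{\xi,A}(a)$ in the $\xi$-analysis tree of $A$ and the order $\gamma+1$ of its parent, and replaces $D_{\xi,A}(a)$ by a maximal $\mathcal{S}_\gamma$ set $\widetilde D$ made of the smallest elements of $\{x\in A\cup B: x\geqslant\min B\}$; the Replacement Lemma guarantees the new set $\widetilde A$ stays in $\Max(\mathcal{S}_\xi)$ with the same minimum. The displaced material $\widetilde B=(B\setminus\widetilde D)\cup(D_{\xi,A}(a)\setminus\widetilde D)$ is then shown to lie in $\mathcal{S}_\xi$ by writing it as a union of two $\mathcal{S}_{\alpha(\xi,\min B)}$ sets and using property~\eqref{e11} of the approximating sequence --- this last point is precisely the level-upgrade mechanism your sketch is missing. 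Finally the process is iterated and terminates because $\min\widetilde B>\min B$. To salvage your approach you would need to either prove the strengthened statement carefully (with a correctly formulated induction that tolerates heterogeneous levels) or replace it with the localized merge/monotonicity argument of \eqref{e11}; as written, the limit case is an open gap.
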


\begin{rek}\normalfont
Let us first note that if $\xi < \omega_1$ and $A\in \mathcal{S}_\xi$ with $1\in A$, then $A = \{1\}$. Therefore, we can assume in the statement of Theorem \ref{m1} that $1\notin A_i$ for $i = 1, 2, \ldots, n$. 
\end{rek}

\begin{proof}[Proof of Theorem \ref{m1}]
We will prove the theorem by transfinite induction for all $\xi < \omega_1$. For the base case $\xi = 0$, the theorem is trivially true. 

Inductive hypothesis: suppose that the theorem is true for all $\eta < \xi$, for some $\xi\geqslant 1$. We shall show that the theorem is true for the ordinal $\xi$. We do so in three steps:
\begin{itemize}
\item Step 1: we prove the theorem for a successor ordinal $\xi$ and $n = 2$.
\item Step 2: we prove the theorem for a limit ordinal $\xi$  and $n = 2$.
\item Step 3: we show that the theorem holds for an arbitrary $n\in \mathbb{N}$.
\end{itemize}

\noindent{\bf Step 1:} Write $\xi = \eta + 1$. Let $A, B\in \mathcal{S}_\xi$ be nonempty. Without loss of generality, we can assume that $2\leqslant \min A < \min B < \max A$. By enlarging $A$, if necessary, we also assume that $A\in \Max(\mathcal{S}_{\xi})$. 

We decompose $A$ as $A = \cup_{i=1}^{\min A} A_i$ with $A_i\in \mathcal{S}_\eta\backslash \{\emptyset\}$ and $\min A\leqslant A_1 < \cdots < A_{\min A}$. Also decompose $B = \cup_{i=1}^\ell B_i$ with $B_i\in \mathcal{S}_\eta\backslash \{\emptyset\}$ and $\ell \leqslant B_1 < \cdots < B_\ell$. Then we order the sets $\{A_i: i \leqslant \min A\}\cup \{B_i: i\leqslant \ell\}$ into sets $(D_i)_{i=1}^{\min A+\ell}$ and apply the inductive hypothesis to obtain nonempty sets $(\widetilde{D}_i)_{i=1}^{\min A+\ell}\subset \mathcal{S}_\eta$ such that
$$\widetilde{D}_1 \ <\ \cdots \ <\ \widetilde{D}_{\min A+\ell}\mbox{ and }\bigcup_{i=1}^{\min A+\ell}D_i \ =\ \bigcup_{i=1}^{\min A+\ell}\widetilde{D}_i.$$

We form $D'_1$ by unioning $\widetilde{D}_1$ with the smallest elements of $\cup_{i=2}^{\min A+\ell} \widetilde{D}_i$, if necessary, until we obtain a maximal $\mathcal{S}_\eta$ set. Then form $D'_2$ from the smallest elements in $(\widetilde{D}_i)_{i=2}^{\min A+\ell}\backslash D'_1$ so that $D'_2$ is a maximal $\mathcal{S}_\eta$ set. In general, we form $D'_j$, for $j = 2, \ldots, \min A$, from the smallest elements in $(\widetilde{D}_i)_{i=j}^{\min A+\ell}\backslash \left(\cup_{i=1}^{j-1} D'_i\right)$. Then the sets $(D'_i)_{i=1}^{\min A}$ are maximal $\mathcal{S}_\eta$. Let $\widetilde{A} = \cup_{i=1}^{\min A} D'_i$.

After the above process, some of the sets $(\widetilde{D}_i\backslash \widetilde{A})_{i=\min A+1}^{\min A+\ell}$ are possibly empty. Discard the empty sets and rename the nonempty ones as $(D'_i)_{i=\min A+1}^{\min A + \ell'}$. Here $\ell'\leqslant \ell$, and there must exist some nonempty sets; otherwise, we have a contradiction with the maximality of $A$. Let $\widetilde{B} = \cup_{i=\min A+1}^{\min A + \ell'} D'_i$.

Since all the sets $(B_i)_{i=1}^\ell$ have $\min B_i\geqslant \min B_1$, there are at least $\ell$ sets $\widetilde{D}_i$'s whose minimum is at least $\min B_1$. Hence, 
we have
\begin{align*}
&\min \widetilde{A} \ =\ \min D'_1\ =\ \min \widetilde{D}_1\ = \ \min A,\\
&\min \widetilde{B} \ =\ \min D'_{\min A+1}\ \geqslant\ \min \widetilde{D}_{\min A + 1}\ \geqslant\ \min B_1\ \geqslant\ \ell\ \geqslant\ \ell'. 
\end{align*}
Hence, both $\widetilde{A}$ and $\widetilde{B}$ are in $\mathcal{S}_\xi$, $\widetilde{A} < \widetilde{B}$, and $\min A = \min \widetilde{A}$ and $\min B\leqslant \min \widetilde{B}$, as desired. Moreover, if $A$ is $\mathcal{S}_\xi$ maximal, then $\widetilde{A}$ is $\mathcal{S}_\xi$ maximal as well. 

\noindent{\bf Step 2:} Let $\xi$ be a limit ordinal and $n = 2$. Let $A, B\in \mathcal{S}_\xi$ be nonempty. Without loss of generality, we can assume that $\min A < \min B < \max A$ and $A\in \Max(\mathcal{S}_{\xi})$. Let $a = \min\{x\in A: x>\min B\}$ and let $\gamma+1 = \mbox{order}_{\xi, A}(E)$, where $E$ is the immediate predecessor of $D_{\xi, A}(a)$ in the tree $T(\xi, A)$. 

Put $F = \{x\in A\cup B: x\geqslant \min B\}$ and write $F = \{f_1, f_2, \ldots, f_\ell\}$, with $\min B = f_1 < f_2 < \cdots < f_\ell$. Then $D_{\xi, A}(a)\cup B\subset F$. Let $m < \ell$ be such that $\widetilde{D} = \{f_1, f_2, \ldots, f_m\}\in \Max(\mathcal{S}_\gamma)$. Such an $m$ must exist because $F$ contains a maximal $\mathcal{S}_\gamma$ set, namely $D_{\xi, A}(a)$. Both $\widetilde{D}$ and $D_{\xi, A}(a) = \{f_1', f_2', \ldots, f_k'\}$ are subsets of $F$, and because $\widetilde{D}$ consists of the smallest elements of $F$, we know that $f_i < f_i'$ for $i = 1, \ldots, \min \{m, k\}$. It must be that $k \geqslant m$; otherwise, the spreading property and $\widetilde{D}\in \mathcal{S}_\gamma$ would imply that $D_{\xi, A}(a)\cup \{a'\}\in \mathcal{S}_\gamma$, for every $a' > \max A\cup B$, is in $\Max(\mathcal{S}_\gamma)$. This contradicts the maximality of $D_{\xi, A}(a)$. Therefore, $\max \widetilde{D} = f_m < f'_k = \max D_{\xi, A}(a)$. We obtain
$$\max \widetilde{D}\ <\ \max D_{\xi, A}(a)\ <\ \min \{\min D\,:\, D\in T(\xi, A)\mbox{ and }D > D_{\xi, A}(a)\}\ =\ m_2.$$
Furthermore, 
$$\min \widetilde{D}\ =\ \min B\ >\ \max\{\max D\,:\, D\in T(\xi, A)\mbox{ and }D < D_{\xi, A}(a)\}\ =\ m_1.$$
It follows from Lemma \ref{repl} that $\widetilde{A} = (A\backslash D_{\xi, A}(a))\cup \widetilde{D}\in \Max(\mathcal{S}_\xi)$, and $\min A = \min \widetilde{A}$. Define $\widetilde{B} = (B\backslash \widetilde{D})\cup (D_{\xi, A}(a)\backslash \widetilde{D})$. It is easy to verify that
\begin{align*}
&\widetilde{A}\cup \widetilde{B}\ =\ A\cup B\mbox{ and }\min B \ <\ \min \widetilde{B},\\
&B\backslash \widetilde{D}\ \in\ \mathcal{S}_{\alpha(\xi, \min B)},\mbox{ and}\\
&D_{\xi, A}(a)\backslash \widetilde{D}\ \in\ \mathcal{S}_{\alpha(\xi, \min A)}\ \subset\ \mathcal{S}_{\alpha(\xi, \min B)}.
\end{align*}
Applying the inductive hypothesis to $\widetilde{B}$, we can find two sets $\widetilde{B}_1, \widetilde{B}_2\in \mathcal{S}_{\alpha(\xi, \min B)}$ with $\widetilde{B}_1 < \widetilde{B}_2$. By \eqref{e11}, we know that $\widetilde{B} = \widetilde{B}_1\cup \widetilde{B}_2\in \mathcal{S}_{\alpha(\xi, \min B+1)}\subset \mathcal{S}_{\alpha(\xi, \min \widetilde{B})}$. Hence, $\widetilde{B}\in \mathcal{S}_{\xi}$.

Either we are now in the situation that $\widetilde{A} < \widetilde{B}$ in which case, we are done, or $\max \widetilde{A} > \min \widetilde{B}$, and we would repeat the above process. However, we can only iterate the process finitely many times because, after each iteration, we increase the minimum of a subset of $A\cup B$ by $1$. This completes our Step 2. As in Step 1, our new set $\widetilde{A}$ is can be chosen to be in $\Max(\mathcal{S}_\xi)$ if $A\in \Max(\mathcal{S}_\xi)$. 

\noindent{\bf Step 3:} We show by induction on $n$ that if $A_1, A_2, \ldots, A_n$ are disjoint, nonempty sets in $\mathcal{S}_\xi$, then there are nonempty sets $\widetilde{A}_1, \widetilde{A}_2, \ldots, \widetilde{A}_n$ in $\mathcal{S}_\xi$ so that $\widetilde{A}_1 < \widetilde{A}_2 < \cdots < \widetilde{A}_n$ and $\cup_{j=1}^n A_j = \cup_{j=1}^n \widetilde{A}_j$. Assume that $\min A_1 < \min A_2 < \cdots < \min A_{n+1}$. By possibly enlarging the sets, we can also assume that $A_1, A_2, \ldots, A_{n+1}\in \Max(\mathcal{S}_\xi)$.

We can recursively find $n$ pairs of nonempty sets $(\widetilde{A}_1^{(j)}, A_j')\in \Max(\mathcal{S}_\xi)\times \mathcal{S}_\xi$, where $j = 2, \ldots, n+1$, so that
\begin{align}
&\widetilde{A}_1^{(j)}\ <\ A_i',\quad i = 2, \ldots, j, \mbox{ and}\label{cc1}\\
&\widetilde{A}_1^{(j)}\cup \bigcup_{i=2}^{j}A_i'\ =\ \bigcup_{i=1}^j A_i\label{cc2}.
\end{align}
Indeed, the case $j = 2$ follows from Step 1 and Step 2. Suppose we have chosen $\widetilde{A}_1^{(j)}$ and $A'_i$ for $j\leqslant n$ and $i = 2, \ldots, j$. From $\widetilde{A}_1^{(j)}$ and $A_{j+1}$, we obtain $\widetilde{A}_1^{(j+1)}\in \Max(\mathcal{S}_\xi)$ and $A'_{j+1}\in \mathcal{S}_\xi$ so that $\widetilde{A}_1^{(j+1)} < A'_{j+1}$ and $\widetilde{A}_1^{(j+1)}\cup A'_{j+1} = \widetilde{A}_1^{(j)}\cup A_{j+1}$. Note that $\max \widetilde{A}_1^{(j+1)} \leqslant \max \widetilde{A}_1^{(j)}$ because otherwise, $\widetilde{A}_1^{(j)}\subsetneq \widetilde{A}_1^{(j+1)}\in \mathcal{S}_\xi$, which contradicts the $\mathcal{S}_\xi$-maximality of $\widetilde{A}_1^{(j)}$.
This together with the supposition that \eqref{cc1} holds for $j$ guarantees that \eqref{cc1} holds for $j+1$. Using $\widetilde{A}_1^{(j+1)}\cup A'_{j+1} = \widetilde{A}_1^{(j)}\cup A_{j+1}$ and the supposition that \eqref{cc2} holds for $j$, we can easily verify that \eqref{cc2} holds for $j+1$. This shows the existence of $\widetilde{A}_1^{(j)}$ and $A_j'$, $j = 2, 3, \ldots, n+1$.

Put $\widetilde{A}_1 = \widetilde{A}_1^{(n+1)}$. Then $\widetilde{A}_1 < A_i'$, for $i = 1, 2, \ldots, n+1$. Apply the inductive hypothesis to the $n$ sets $(A_i')_{i=2}^{n+1}\subset \mathcal{S}_\xi$ to have $(\widetilde{A}_i)_{i=2}^{n+1}\subset\mathcal{S}_\xi$ satisfying
$$\widetilde{A}_2 \ <\ \cdots \ <\ \widetilde{A}_{n+1}\quad \mbox{ and }\quad \bigcup_{i=2}^{n+1} \widetilde{A}_{i}\ =\ \bigcup_{i=2}^{n+1} A'_i.$$
It follows from \eqref{cc2} that 
$$\widetilde{A}_1\ <\  \widetilde{A}_2 \ <\ \cdots \ <\ \widetilde{A}_{n+1}\quad \mbox{ and }\quad \bigcup_{i=1}^{n+1} \widetilde{A}_{i}\ =\ \widetilde{A}_1^{(n+1)}\cup \bigcup_{i=2}^{n+1} A'_i\ =\ \bigcup_{i=1}^{n+1} A_i.$$
This completes our proof. 
\end{proof}

\begin{cor}\label{mc1}
For all countable ordinal $\xi\in  [1,\omega_1)$, we have $\mathcal{S}_\xi = \mathcal{S}^M_\xi$. 
\end{cor}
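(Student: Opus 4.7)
The plan is to deduce the corollary directly from Theorem \ref{m1} by transfinite induction on $\xi$, proving both inclusions in parallel. The easy direction is $\mathcal{S}_\xi\subset\mathcal{S}^M_\xi$: the requirement $n\leqslant E_1<E_2<\cdots<E_n$ in the definition of $\mathcal{S}_{\gamma+1}$ strictly implies the pairwise-disjointness and condition $E_i\geqslant n$ used to define $\mathcal{S}^M_{\gamma+1}$, and the limit-case definitions coincide verbatim once the inductive hypothesis kicks in. A short transfinite induction therefore settles this inclusion with no new work.

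For the nontrivial direction $\mathcal{S}^M_\xi\subset \mathcal{S}_\xi$, I would again argue by transfinite induction on $\xi$. The base case $\xi=0$ is immediate from the definitions. Assuming $\mathcal{S}^M_\gamma=\mathcal{S}_\gamma$ for every $\gamma<\xi$, one splits into two subcases. If $\xi=\gamma+1$ is a successor, take $A\in \mathcal{S}^M_\xi$ and write $A=\bigcup_{i=1}^n E_i$ with pairwise disjoint nonempty $E_i\in\mathcal{S}^M_\gamma$ and $E_i\geqslant n$. By the inductive hypothesis each $E_i\in\mathcal{S}_\gamma$, and after reindexing we may assume $\min E_1<\cdots<\min E_n$. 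Theorem \ref{m1} applied to $(E_i)_{i=1}^n$ produces sets $E'_1<\cdots<E'_n$ in $\mathcal{S}_\gamma$ with $\bigcup E'_i=\bigcup E_i=A$ and $\min E'_i\geqslant \min E_i\geqslant n$. In particular $n\leqslant E'_1<\cdots<E'_n$, so by the defining formula \eqref{e00} we conclude $A\in\mathcal{S}_\xi$.

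If $\xi$ is a limit ordinal and $A\in\mathcal{S}^M_\xi$, then by the modified limit definition there exists $n\leqslant \min A$ with $A\in\mathcal{S}^M_{\alpha(\xi,n)}$. The inductive hypothesis gives $A\in\mathcal{S}_{\alpha(\xi,n)}$, and then \eqref{e12} (or the cleaner formulation \eqref{e14}) yields $A\in\mathcal{S}_\xi$. This finishes the induction and the corollary.

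The one point worth flagging is the successor case: everything rests on being able to rearrange a pairwise disjoint family in $\mathcal{S}_\gamma$ into a block-ordered family in $\mathcal{S}_\gamma$ without decreasing minima, and this is precisely the content of Theorem \ref{m1}. So there is no genuine obstacle at the level of the corollary itself — the whole difficulty has been absorbed into Theorem \ref{m1} (and the replacement Lemma \ref{repl} that powers it). The proof that needs to be written is essentially a bookkeeping of the two cases above together with verifying that $\min E_i\geqslant n$ is preserved under the rearrangement supplied by Theorem \ref{m1}.
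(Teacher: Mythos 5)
Your proof is correct and follows essentially the same route as the paper: transfinite induction, with the successor case resolved by applying Theorem~\ref{m1} to the pairwise-disjoint decomposition and the limit case reduced to the approximating ordinal via \eqref{e14}. The only (harmless) difference is that you explicitly spell out the trivial inclusion $\mathcal{S}_\xi\subset\mathcal{S}^M_\xi$, which the paper leaves implicit; you should also note that any empty $E_i$ must be discarded before invoking Theorem~\ref{m1}, which requires nonempty sets, but this is cosmetic.
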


\begin{proof}
We proceed by transfinite induction.
The case $\xi = 1$ is trivial. Assume that the corollary holds for all $\gamma < \xi$ for some $\xi\geqslant 2$. 

\noindent{\bf Case 1:} if $\xi$ is a successor ordinal, then write $\xi = \gamma + 1$. Let $F\in \mathcal{S}^M_\xi$. By definition, $F$ can be written as $\cup_{i=1}^k F_i$, for $(F_i)_{i=1}^k\subset \mathcal{S}^M_\gamma$, being pairwise disjoint, and $(\min F_i)_{i=1}^k\in \mathcal{S}_1$. By the inductive hypothesis, $\mathcal{S}^M_\gamma = \mathcal{S}_\gamma$; hence, $(F_i)_{i=1}^k\subset \mathcal{S}_\gamma$. 
By Theorem \ref{m1}, we can find $(F_i')_{i=1}^k\subset \mathcal{S}_\gamma$ such that
$$F'_1 \ <\ F'_2 \ <\ \cdots \ <\ F'_k, \{\min F'_i\,:\, 1\leqslant i\leqslant k\}\in \mathcal{S}_1, \mbox{ and }\cup_{i=1}^k F'_i \ =\ \cup_{i=1}^k F_i.$$
Therefore, $F$ is in $\mathcal{S}_\xi$. 

\noindent{\bf Case 2:} $\xi$ is a limit ordinal. Pick $F\in \mathcal{S}^M_\xi$, then $F\in \mathcal{S}^M_{\alpha(\xi, k)}$ for some $k\leqslant \min F$. By the inductive hypothesis, $F\in \mathcal{S}_{\alpha(\xi, k)}\subset \mathcal{S}_{\alpha(\xi, \min F)}$, which implies that $F\in \mathcal{S}_\xi$. 
\end{proof}

\section{Modified Tsirelson spaces of higher order}\label{iso}

Manoussakis \cite{Manoussakis04} proved that the Tsirelson space $T[\mathcal{S}_\xi, \theta]$ is naturally isomorphic to its modified version $T_M[\mathcal{S}_\xi, \theta]$ for all finite ordinals $\xi$. To do so, he compared $T[\mathcal{S}_\xi, \theta]$ to an auxiliary space $Y$ then compared $T_M[\mathcal{S}_\xi, \theta]$ to $Y$, where the latter step used a technique from Bellenot \cite{Bellenot86}. In this section, we extended Manoussakis' result by proving the isomorphism between $T[\mathcal{S}_\xi, \theta]$ and $T_M[\mathcal{S}_\xi, \theta]$ for all ordinals $\xi < \omega_1$. To compare  $T[\mathcal{S}_\xi, \theta]$ to $Y$, we borrow Manoussakis' argument and adapt it to handle limit ordinals (see Lemma \ref{l10}.) To compare $T_M[\mathcal{S}_\xi, \theta]$ to $Y$, we present a different proof than Manoussakis' and Bellenot's.

Let $c_{00}$ be the vector space of all sequences $x=(x_n)$  in $\mathbb R$ for which 
the support, $\text{supp } x=\{n\in \mathbb N: x_n\not= 0\}$, is finite. We denote by $(e_n)$ the unit vector basis of $c_{00}$ and by $(e^*_n)$ its coordinate functionals.
All our Banach spaces will be completions of $c_{00}$ under some norm, for which $(e_n)$ is a normalized and unconditional basis. 

Let $\mathcal{F}$ denote a regular family of finite subsets of $\mathbb{N}$. A sequence of finite sets $(E_i)_{i=1}^k$ of positive integers is said to be $\mathcal{F}$-{\em admissible} if 
$$E_1 \ <\ E_2 \ <\ \cdots \ <\ E_k\mbox{ and }\{\min E_i\,:\, 1\leqslant i\leqslant k\}\ \in\ \mathcal{F}.$$
A sequence of finite sets $(E_i)_{i=1}^k$ of positive integers is said to be $\mathcal{F}$-{\em allowable} if 
$$(E_i)_{i=1}^k\mbox{ are pairwise disjoint and }\{\min E_i\,:\, 1\leqslant i\leqslant k\}\ \in\ \mathcal{F}.$$
Let $\theta$ be a number with $0 < \theta < 1$.
The Tsirelson space $T[\mathcal{F}, \theta]$ is the completion of $c_{00}$ under the implicit norm
$$\|x\|_{\mathcal{F}, \theta}\ =\ \max\left\{\|x\|_\infty, \theta\sup\left\{\sum_{i=1}^k \|E_ix\|\,:\, (E_i)_{i=1}^k \mbox{ is }\mathcal{F}\mbox{-admissible}\right\}\right\}.$$
The modified Tsirelson space $T_M[\mathcal{F}, \theta]$ is the completion of $c_{00}$ under the implicit norm
$$\|x\|_{\mathcal{F}, \theta, M} \ =\ \max\left\{\|x\|_\infty, \theta\sup\left\{\sum_{i=1}^k \|E_ix\|\,:\, (E_i)_{i=1}^k \mbox{ is }\mathcal{F}\mbox{-allowable}\right\}\right\}.$$
It was shown in \cite{AlspachArgyros92}, where the space $T[\mathcal{S}_\xi, \theta]$ was introduced, that 
 $T[\mathcal{S}_\xi, \theta]$ is reflexive, and, thus, in particular, the unit basis  
 cannot have a block basis equivalent to the unit vector basis
 of $\ell_1$. It follows from \cite[Theorem 1.27]{ArgyrosDeliyanniKutzarovaManoussakis98} that  $T_M[\mathcal{S}_\xi,\theta] $ is also reflexive.

Our goal in this section is to show that $T[\mathcal{S}_\xi, \theta]$ is $3$-isomorphic to $T_M[\mathcal{S}_\xi, \theta]$ for every countable ordinal $\xi$ and for every $0<\theta < 1$. To do so, we shall use the auxiliary space $T[\mathcal{S}_\xi[\mathcal{A}_2], \theta]$, where $\mathcal{A}_n = \{E\subset \mathbb{N}: |E|\leqslant n\}$. It suffices to prove the following two theorems. 
\begin{thm}\label{mm1}
For every $x\in c_{00}$, we have
$$\|x\|_{\mathcal{S}_\xi[\mathcal{A}_2],\theta}\ \leqslant\ 3\|x\|_{\mathcal{S}_\xi, \theta},$$
\end{thm}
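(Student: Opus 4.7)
The plan is to prove the estimate by induction on $|\supp(x)|$, using the implicit definition of $\|\cdot\|_{\mathcal{S}_\xi[\mathcal{A}_2],\theta}$ together with a decomposition of any $\mathcal{S}_\xi[\mathcal{A}_2]$-admissible sequence into two $\mathcal{S}_\xi$-admissible sub-sequences. The transfinite parameter $\xi$ itself does not enter the induction directly; it enters only through the successor/limit structure of $\mathcal{S}_\xi$ in exactly the same way as it did in the lemmas of Section~3.

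The key combinatorial observation is as follows. Given an $\mathcal{S}_\xi[\mathcal{A}_2]$-admissible $(F_i)_{i=1}^n$, by definition the set $\{\min F_i\,:\,1\leqslant i\leqslant n\}=G_1\cup\cdots\cup G_k$ with $G_1<\cdots<G_k$, each $|G_j|\leqslant 2$, and $\{\min G_j\,:\,1\leqslant j\leqslant k\}\in\mathcal{S}_\xi$. This partitions $\{1,\ldots,n\}$ into $k$ consecutive groups $I_j$ of size at most $2$. The ``first-element'' sub-sequence, consisting of $F_i$ for $i=\min I_j$, has minimums exactly $\{\min G_j\}\in\mathcal{S}_\xi$; the ``second-element'' sub-sequence, consisting of $F_i$ for $i=\max I_j$ when $|I_j|=2$, has minimums forming a spread of a subset of $\{\min G_j\}$, hence lies in $\mathcal{S}_\xi$ by the spreading property. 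Applying the defining inequality for $\|\cdot\|_{\mathcal{S}_\xi,\theta}$ to each of these two $\mathcal{S}_\xi$-admissible sub-sequences yields the key bound
\[
\theta\sum_{i=1}^n\|F_i x\|_{\mathcal{S}_\xi,\theta}\ \leqslant\ 2\|x\|_{\mathcal{S}_\xi,\theta}.
\]
For the inductive step, if $\|x\|_{\mathcal{S}_\xi[\mathcal{A}_2],\theta}=\|x\|_\infty$, the estimate $\|x\|_\infty\leqslant\|x\|_{\mathcal{S}_\xi,\theta}$ suffices. Otherwise the norm is attained by $\theta\sum_i\|F_i x\|_{\mathcal{S}_\xi[\mathcal{A}_2],\theta}$ for some $\mathcal{S}_\xi[\mathcal{A}_2]$-admissible $(F_i)$; one reduces to the case each $F_i\cap\supp(x)$ is a proper subset of $\supp(x)$ (the degenerate alternative being immediate), the induction hypothesis applies, and the combinatorial bound above combines the pieces. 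In the limit-ordinal case, the recursion \eqref{e14} reduces to the successor ordinal $\alpha(\xi,\min F_1)$, just as in the lemmas of Section~3.

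The main technical obstacle is securing the constant~$3$: a naive combination of the inductive factor~$3$ with the two-sub-sequence factor~$2$ yields only~$6$. Closing this gap requires a refined inductive hypothesis in which the $\ell_\infty$-contribution is tracked additively rather than multiplicatively, taking the form $\|x\|_{\mathcal{S}_\xi[\mathcal{A}_2],\theta}\leqslant\|x\|_\infty+2\|x\|_{\mathcal{S}_\xi,\theta}$; the constant $3$ then arises as $1+2$ upon dominating $\|x\|_\infty$ by $\|x\|_{\mathcal{S}_\xi,\theta}$. This additive bookkeeping, which I expect to be the delicate step, adapts Manoussakis' argument for finite $\xi$ to the transfinite setting by using the recursive structure \eqref{e14} of $\mathcal{S}_\xi$ at limit ordinals and the partition into groups $I_j$ at successor ordinals.
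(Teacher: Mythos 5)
Your combinatorial observation is correct: any $\mathcal{S}_\xi[\mathcal{A}_2]$-admissible sequence $(F_i)$ can be split into a ``first-element'' sub-sequence and a ``second-element'' sub-sequence, each $\mathcal{S}_\xi$-admissible, giving $\theta\sum_i\|F_ix\|_{\mathcal{S}_\xi,\theta}\leqslant 2\|x\|_{\mathcal{S}_\xi,\theta}$. The difficulty you flag, however, is genuine and your proposed fix does not close it. If the inductive hypothesis is $\|y\|_{\mathcal{S}_\xi[\mathcal{A}_2],\theta}\leqslant\|y\|_\infty+2\|y\|_{\mathcal{S}_\xi,\theta}$, then in the inductive step you obtain
$\theta\sum_i\|F_ix\|_{\mathcal{S}_\xi[\mathcal{A}_2],\theta}\leqslant\theta\sum_i\|F_ix\|_\infty+2\theta\sum_i\|F_ix\|_{\mathcal{S}_\xi,\theta}$,
and the two-subsequence bound gives at best $\theta\sum_i\|F_ix\|_\infty\leqslant 2\|x\|_{\mathcal{S}_\xi,\theta}$ and $2\theta\sum_i\|F_ix\|_{\mathcal{S}_\xi,\theta}\leqslant 4\|x\|_{\mathcal{S}_\xi,\theta}$, for a total of $6\|x\|_{\mathcal{S}_\xi,\theta}$. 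The factor $2$ from the split necessarily multiplies the inductive constant, and the $\|\cdot\|_\infty$ term cannot absorb the surplus (the first sum is $\theta$ times a possibly long $\ell_1$-sum of coordinates, not a single $\|x\|_\infty$). The additive reformulation does not break this compounding.

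The paper avoids the compounding by a structurally different device. Instead of inducting on $|\supp x|$ and splitting an admissible family into two $\mathcal{S}_\xi$-admissible subfamilies, it inducts on the level $n$ of the norming set $K_n(\mathcal{S}_\xi[\mathcal{A}_2])$ with the invariant that each $f^*\in K_n$ is dominated on $x$ by a sum of \emph{three} $K(\mathcal{S}_\xi)$-functionals with \emph{consecutive supports} inside $\supp f^*$. When $f^*=\theta\sum_{i=1}^d f^*_i$, applying the invariant to each $f^*_i$ yields $3d$ consecutively supported pieces whose $\min\supp$-set lies in $(\mathcal{S}_\xi[\mathcal{A}_2])[\mathcal{A}_3]$; Lemma~\ref{l10} (the inclusion $(\mathcal{S}_\xi[\mathcal{A}_2])[\mathcal{A}_3]\subset(\mathcal{S}_\xi)^3$) then permits re-grouping these $3d$ pieces into just three $\mathcal{S}_\xi$-admissible blocks, so the count $3$ is preserved, not multiplied. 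The consecutive-support requirement is precisely what makes the re-grouping possible, and this is the ingredient missing from your sketch: your first-element and second-element subfamilies have interleaved (rather than consecutive) supports, so no analogous re-grouping is available. Without Lemma~\ref{l10} or a comparable regrouping statement, the induction you propose cannot yield a finite constant, let alone $3$.
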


\begin{thm}\label{mm2}
For every $x\in c_{00}$, we have
$$\|x\|_{\mathcal{S}_\xi, \theta, M}\ \leqslant\ \|x\|_{\mathcal{S}_\xi[\mathcal{A}_2],\theta}$$
and thus, by Theorem \ref{mm1},
$$\|x\|_{\mathcal{S}_\xi, \theta}\ \leqslant\ \|x\|_{\mathcal{S}_\xi, \theta,M}\ \leqslant\ 3\|x\|_{\mathcal{S}_\xi, \theta}.$$
\end{thm}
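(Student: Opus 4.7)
The plan is to compare the standard norming sets of the two spaces. Let $W$ and $W'$ denote the minimal subsets of $c_{00}^*$ containing $\{\pm e_n^*\}$ and closed under $\theta$-weighted sums $(f_1,\ldots,f_k)\mapsto\theta\sum_i f_i$ whenever $(\mathrm{supp}(f_i))$ is respectively $\mathcal{S}_\xi$-allowable or $\mathcal{S}_\xi[\mathcal{A}_2]$-admissible. Then $\|x\|_{\mathcal{S}_\xi,\theta,M}=\sup_{f\in W}|f(x)|$ and $\|x\|_{\mathcal{S}_\xi[\mathcal{A}_2],\theta}=\sup_{g\in W'}|g(x)|$, so the theorem reduces to the following domination claim: for every $f\in W$ there is $g\in W'$ with $\mathrm{supp}(g)\subseteq\mathrm{supp}(f)$ and $|f(x)|\le g(|x|)$ for all $x\in c_{00}$.

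I would prove this by induction on the tree-height of $f\in W$. The base case $f=\pm e_n^*$ is immediate, taking $g = e_n^*$. In the inductive step, write $f=\theta\sum_{i=1}^k f_i$ with $(E_i:=\mathrm{supp}(f_i))$ an $\mathcal{S}_\xi$-allowable family and each $f_i$ of smaller height. By the inductive hypothesis there are $g_i\in W'$ with $\mathrm{supp}(g_i)\subseteq E_i$ and $|f_i(x)|\le g_i(|x|)$, so $|f(x)|\le\theta\sum_i g_i(|x|)$, and the remaining task is to realise $\theta\sum_i g_i$ as an element of $W'$. Here Theorem~\ref{m1} is the crucial tool: applied (directly, or after suitably enlarging the $E_i$'s into a disjoint family of sets in $\mathcal{S}_\xi$ guided by the $\xi$-analysis of $\{\min E_i\}\in\mathcal{S}_\xi$), it yields an ordered family $E_1'<\cdots<E_k'$ in $\mathcal{S}_\xi$ with $\bigcup E_i=\bigcup E_i'$ and $\min E_i\le\min E_i'$, which is $\mathcal{S}_\xi$-admissible by the spreading property. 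Setting $g^*:=\sum_i g_i$ and restricting to each $E_l'$ yields a piece $h_l=E_l' g^*=\sum_i(E_l'\cap E_i)g_i$ supported in $E_l'$, and the $\mathcal{A}_2$ factor permits a further splitting of $E_l'$ into two ordered sub-blocks $A_l<B_l$ so that the doubled family $(A_1,B_1,A_2,B_2,\ldots,A_k,B_k)$ is $\mathcal{S}_\xi[\mathcal{A}_2]$-admissible; pairing consecutive sub-blocks recovers the $\mathcal{S}_\xi$-admissible minima $\{\min E_l'\}$.

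The main obstacle I expect is that within a single $E_l'$ the restricted pieces $(E_l'\cap E_i)g_i$ may be interleaved, so the restrictions $A_l g^*$ and $B_l g^*$ are sums of $W'$-elements (each lying in $W'$ by closure under coordinate restriction) that are not automatically in $W'$. The $\mathcal{A}_2$-factor is precisely what is designed to absorb one layer of this interleaving, but verifying that the two-block split of each $E_l'$ does indeed produce elements of $W'$ that combine to dominate $|f(x)|$ will likely require a secondary induction or an iterated application of Theorem~\ref{m1} at finer levels---exploiting the recursive structure of each $g_i$ to decompose it compatibly with the chosen splits. Throughout, Corollary~\ref{mc1} (the equality $\mathcal{S}_\xi^M=\mathcal{S}_\xi$) is used to move freely between allowable and admissible structures at every level of the induction, and this interchangeability is what keeps the recursion from blowing up.
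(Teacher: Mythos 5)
Your reduction to a norming-set inclusion and your high-level plan (induction, Theorem~\ref{m1}, the $\mathcal{A}_2$ factor absorbing one layer of interleaving) are in the right spirit, but there is a genuine gap precisely at the point you flag yourself, and I do not think your proposed fix can be made to work as stated. The difficulty: after applying Theorem~\ref{m1} to produce consecutive blocks $E_1'<\cdots<E_k'$, the restriction $h_l=\sum_i (E_l'\cap E_i)g_i$ of $\sum g_i$ to a single block $E_l'$ is a sum of $W'$-elements whose supports, while pairwise disjoint, may interleave in an arbitrarily complicated way --- not merely a two-way split. Cutting $E_l'$ into two ordered sub-blocks $A_l<B_l$ does not resolve this: each of $A_l g^*$ and $B_l g^*$ can still be a sum of interleaved pieces from several different $g_i$'s, so neither needs to lie in $W'$. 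A single application of the $\mathcal{A}_2$ factor does not absorb $n$-fold interleaving. Moreover, because you induct on the height of a single functional, the interleaving defect is not controlled as the recursion goes deeper; nothing in your sketch bounds how many consecutive pieces a given block can fragment into across levels.

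What the paper does differently, and what closes this gap, is to strengthen the inductive hypothesis. Instead of pushing a single functional $f$ through the induction, it codes functionals by $\mathcal{F}$-allowable trees (Proposition~\ref{P:5.3}) and proves a statement about an entire \emph{collection} of such trees at once (Lemma~\ref{L:5.5}): given $\ell$ disjoint $\mathcal{F}$-allowable trees whose roots lie in $(\mathcal{F})^s$, one produces $\ell'$ consecutive $\mathcal{F}[\mathcal{A}_2]$-admissible trees with the same total $\Psi$-value, with the sharp quantitative control $\ell'\leqslant \ell + k$ where $k$ is the number of singleton trees, and with roots in $(\mathcal{F}[\mathcal{A}_2])^s$. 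The inductive step decomposes each non-singleton tree one level down (Proposition~\ref{P:5.2}), applies the hypothesis to the enlarged collection of shorter trees, and reassembles; the crucial point is that only the \emph{singletons} can split a resulting admissible block, each splitting at most one block into two (Proposition~\ref{P:5.6}), which is exactly what the $\mathcal{A}_2$ factor is calibrated to absorb --- once, not repeatedly. It is this strengthened, counting-aware formulation, rather than a direct recombination of restricted $g_i$'s, that makes the recursion close. Your ``secondary induction or iterated application of Theorem~\ref{m1}'' gesture is pointing at the right need, but without the explicit tree bookkeeping and the $\ell'\leqslant\ell+k$ invariant, the argument does not go through.
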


To prove Theorem \ref{mm1}, we recall some notation for the statement of the key Lemma \ref{l10}. 
Given two families $\mathcal{M}$ and $\mathcal{N}$ of subsets of $\mathbb{N}$, let 
$$\mathcal{M}[\mathcal{N}]\ =\ \left\{\bigcup_{i=1}^k F_i\,:\, F_i\in \mathcal{N}\mbox{ and }(F_i)_{i=1}^k\mbox{ is }\mathcal{M}\mbox{-admissible}\right\}, \mbox{ and }$$
$$(\mathcal{M})^3\ =\ \mathcal{A}_3[\mathcal{M}]\ =\ \left\{M_1\cup M_2\cup M_3\,:\, M_1 < M_2 < M_3\mbox{ and }M_i\in \mathcal{M}\right\}.$$

Below are several useful facts:
\begin{enumerate}
    \item For families $\mathcal{L}, \mathcal{M}$, and $\mathcal{N}$, it holds that
    \begin{equation}\label{e31}
    (\mathcal{L}[\mathcal{M}])[\mathcal{N}]\ =\ \mathcal{L}[\mathcal{M}[\mathcal{N}]].
    \end{equation}
    \begin{proof}
    Let $F_1 < \cdots < F_k$, where $F_i\in \mathcal{N}$ and $(\min F_i)_{i=1}^k\in \mathcal{L}[\mathcal{M}]$. Setting $j_0 = 0$, we can find $\ell$ and  $j_1< \cdots< j_\ell$ such that $\{\min F_{j_{i-1}+1}, \ldots, \min F_{j_{i}}\}\in \mathcal{M}$ for $i=1, \ldots, \ell$, and $(\min F_{j_{i}+1})_{i=0}^{\ell-1}\in \mathcal{L}$. Now let $E_n = \cup_{i=j_{n-1}+1}^{j_{n}} F_{i}$, $n = 1, 2, \ldots, \ell$. 
    Clearly, $E_n\in \mathcal{M}[\mathcal{N}]$. Therefore, $\cup_{i=1}^k F_i = \cup_{n=1}^\ell E_n\in  \mathcal{L}[\mathcal{M}[\mathcal{N}]]$.
    
     Conversely, we take $F_1 < \cdots < F_k$, where each $F_i\in \mathcal{M}[\mathcal{N}]$ and the set $\{\min F_1, \ldots, \min F_k\}$ is in $\mathcal{L}$. Write $F_i = \cup_{j=1}^{\ell_i} E^i_j$, for $E^i_j\in \mathcal{N}$, $(\min E^i_j)_{j=1}^{\ell_i}\in \mathcal{M}$, and $E^i_1 < \cdots < E^i_{\ell_i}$. Note that $\{\min E^i_j: 1\leqslant i\leqslant k, 1\leqslant j\leqslant \ell_i\}$ is in $\mathcal{L}[\mathcal{M}]$. Hence, $\cup_{i=1}^k\left(\cup_{j=1}^{\ell_i} E^i_j\right)\in  (\mathcal{L}[\mathcal{M}])[\mathcal{N}]$.

    This completes our proof. 
    \end{proof}
    \item From  \eqref{e31}, we deduce that for  a family $\mathcal{F}$ of subsets of $\mathbb{N}$ and  for all $\xi < \omega_1$, 
    \begin{equation}\label{e30}\mathcal{S}_{\xi+1}[\mathcal{F}]\ =\ \mathcal{S}_1[\mathcal{S}_\xi[\mathcal{F}]].\end{equation}
    \item For all $\xi < \omega_1$, 
    \begin{equation}\label{e32}\mathcal{S}_1[(\mathcal{S}_\xi)^3]\ \subset\ (\mathcal{S}_{\xi+1})^3.\end{equation}
    \begin{proof}
    Take $F_1  < \cdots < F_k$, $F_i\in (\mathcal{S}_\xi)^3$, and $\{\min F_1, \ldots, \min F_k\}\in \mathcal{S}_1$. Write $F_i = F^i_1\cup F^i_2\cup F^i_3$, for $F^i_j\in \mathcal{S}_\xi$ and $F^i_1 < F^i_2 < F^i_3$. Reindex the sets $\{F^i_j: 1\leqslant i\leqslant k, 1\leqslant j\leqslant 3\}$ into $(E_i)_{i=1}^{3k}$ satisfying
    $k\ \leqslant \ E_1 \ <\ E_2 \ <\ \cdots\ <\ E_{3k}$. 
    Since $k\leqslant E_1 < E_2 < \cdots < E_k$ and $E_i\in \mathcal{S}_\xi$, we know that $\cup_{i=1}^k E_i\in \mathcal{S}_{\xi+1}$.  Next, $2k\leqslant E_{k+1} < E_{k+2} < \cdots < E_{3k}$ and $E_i\in \mathcal{S}_\xi$; hence, $\cup_{i=k+1}^{3k} E_i\in \mathcal{S}_{\xi+1}$. We conclude that $$\bigcup_{i=1}^k F_i \ =\  \bigcup_{i=1}^k E_i\cup \bigcup_{i=k+1}^{3k} E_i\ \in\ (\mathcal{S}_{\xi+1})^2.$$
    \end{proof}
\end{enumerate}

Using the above facts, we can prove the following lemma. 

\begin{lem}\label{l10}
Let $1\leqslant \xi < \omega_1$. It holds that 
\begin{equation}\label{e33}(\mathcal{S}_\xi[\mathcal{A}_2])[\mathcal{A}_3] \ \subset\ (\mathcal{S}_\xi)^3.\end{equation}
\end{lem}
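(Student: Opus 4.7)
My plan is to prove the inclusion by transfinite induction on $\xi$, after first observing via \eqref{e31} and the trivial identity $\mathcal{A}_2[\mathcal{A}_3]=\mathcal{A}_6$ that the claim is equivalent to $\mathcal{S}_\xi[\mathcal{A}_6]\subset(\mathcal{S}_\xi)^3$. Working with this reformulation turns the successor and limit steps into routine manipulations of the family identities already recorded in the paper, and concentrates the combinatorial difficulty in the single base case $\xi=1$.

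For the base case, I would take $F\in\mathcal{S}_1[\mathcal{A}_6]$, written as $F=\bigcup_{i=1}^k G_i$ with $G_i\in\mathcal{A}_6$, $G_1<\cdots<G_k$, and $k\leqslant\min G_1$. Enumerating $F=\{a_1<a_2<\cdots<a_m\}$, one has $m\leqslant 6k$ and $a_i\geqslant k+i-1$ for all $i$. I would then greedily partition $F$ into $A<B<C$ by letting $A$ be the first $a_1$ elements, $B$ be the next $a_{a_1+1}$ elements, and $C$ be whatever remains (with the procedure truncated if $F$ is exhausted earlier). Then $A,B\in\mathcal{S}_1$ by construction. The only verification that actually uses the constants is checking $|C|\leqslant\min C$: since $m\leqslant 6k$, $a_1\geqslant k$, and $a_{a_1+1}\geqslant k+a_1$, one gets $|C|\leqslant 6k-a_1-a_{a_1+1}$ and $\min C=a_{a_1+a_{a_1+1}+1}\geqslant k+a_1+a_{a_1+1}$, so the required inequality reduces to $5k\leqslant 2a_1+2a_{a_1+1}$, which follows from the two lower bounds. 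This is the place where the factor $6$ is tight enough to just barely work, and it is the main obstacle in the proof.

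For the successor step $\xi=\eta+1$, I would use \eqref{e31}, \eqref{e30}, the inductive hypothesis, and \eqref{e32} in sequence:
\begin{equation*}
(\mathcal{S}_{\eta+1}[\mathcal{A}_2])[\mathcal{A}_3]
=\mathcal{S}_1\bigl[\mathcal{S}_\eta[\mathcal{A}_2[\mathcal{A}_3]]\bigr]
=\mathcal{S}_1\bigl[(\mathcal{S}_\eta[\mathcal{A}_2])[\mathcal{A}_3]\bigr]
\subset\mathcal{S}_1[(\mathcal{S}_\eta)^3]
\subset(\mathcal{S}_{\eta+1})^3.
\end{equation*}

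For the limit step, I would take $F\in\mathcal{S}_\xi[\mathcal{A}_6]$, set $n=\min F$, and use \eqref{e14} to conclude $F\in\mathcal{S}_{\alpha(\xi,n)}[\mathcal{A}_6]$. Since $\alpha(\xi,n)<\xi$, the inductive hypothesis supplies $F=A\cup B\cup C$ with $A<B<C$ and $A,B,C\in\mathcal{S}_{\alpha(\xi,n)}$. Because $\min A=n$ and $\min B,\min C\geqslant n$, the monotonicity $\mathcal{S}_{\alpha(\xi,m)}\subset\mathcal{S}_{\alpha(\xi,n)}$ for $m\leqslant n$ coming from \eqref{e11} together with \eqref{e14} gives $A,B,C\in\mathcal{S}_\xi$, completing the induction.
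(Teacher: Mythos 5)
Your proof is correct and follows essentially the same transfinite-induction scheme as the paper, using \eqref{e31}, \eqref{e30}, \eqref{e32} for the successor step and \eqref{e14} for the limit step. The one small refinement is worth noting: by passing to $\mathcal{S}_\xi[\mathcal{A}_6]$ once and for all via \eqref{e31}, the set $\{\min F_i\}$ in the limit step lands directly in $\mathcal{S}_\xi$, so \eqref{e14} applies immediately, whereas the paper keeps $[\mathcal{A}_2][\mathcal{A}_3]$ separate and must first extract the odd-indexed minima to get an $\mathcal{S}_\xi$ set before invoking \eqref{e14}; your base-case computation (showing $5k\leqslant 2a_1+2a_{a_1+1}$ via the greedy split into maximal $\mathcal{S}_1$ blocks) is also spelled out in more detail than the paper's terse reference to ``the same reasoning as in the proof of \eqref{e32}''.
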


\begin{proof}
We proceed by transfinite induction on $\xi$. When $\xi = 1$, \eqref{e31} gives
$$(\mathcal{S}_1[\mathcal{A}_2])[\mathcal{A}_3] \ =\ \mathcal{S}_1[\mathcal{A}_2[\mathcal{A}_3]]\ =\ \mathcal{S}_1[\mathcal{A}_6].$$
Let $F \in \mathcal{S}_1[\mathcal{A}_6]$, then $F = \cup_{i=1}^k F_i$, where $k\leqslant F_1 < F_2 < \cdots < F_k$ and $|F_i|\leqslant 6$. Write $F = \{m_1, m_2, \ldots, m_\ell\}$ with $k\leqslant m_1\leqslant m_2 \leqslant \cdots \leqslant m_\ell$ and $\ell\leqslant 6k$. The same reasoning as in the proof of \eqref{e32} guarantees that $F\in (\mathcal{S}_1)^3$.

Inductive hypothesis: Suppose that \eqref{e33} holds for all $\gamma < \xi$ for some $\xi\geqslant 2$. We shall show that it is true for $\xi$.

\noindent{\bf Case 1:} $\xi$ is a successor ordinal. Write $\xi = \gamma+1$. By \eqref{e31}, the inductive hypothesis, and \eqref{e32}, we have
\begin{align*} (\mathcal{S}_{\gamma+1}[\mathcal{A}_2])[\mathcal{A}_3]& \ =\ ((\mathcal{S}_1[\mathcal{S}_\gamma])[\mathcal{A}_2])[\mathcal{A}_3] \ =\ (\mathcal{S}_1[\mathcal{S}_\gamma[\mathcal{A}_2]])[\mathcal{A}_3]\\
&\ = \ \mathcal{S}_1 [(\mathcal{S}_\gamma[\mathcal{A}_2]) [\mathcal{A}_3]]
\ \subset\ \mathcal{S}_1 [(\mathcal{S}_\gamma)^3]\ \subset\ (\mathcal{S}_{\xi})^3.
\end{align*}

\noindent{\bf Case 2}: $\xi$ is a limit ordinal. Take $F\in (\mathcal{S}_{\xi}[\mathcal{A}_2])[\mathcal{A}_3]$. Then $F = \cup_{i=1}^k F_i$, for $|F_i|\leqslant 3$ and $(\min F_i)_{i=1}^k\in \mathcal{S}_{\xi}[\mathcal{A}_2]$. Hence, $\{\min F_i: i = 1, 3, \ldots, 2\lfloor (k-1)/2\rfloor+1\}\in \mathcal{S}_{\xi}$. 
By \eqref{e14}, $\{\min F_i: i = 1, 3, \ldots, 2\lfloor (k-1)/2\rfloor+1\}\in \mathcal{S}_{\alpha(\xi, \min F)}$. By the inductive hypothesis, $F\in (\mathcal{S}_{\alpha(\xi, \min F)}[\mathcal{A}_2])[\mathcal{A}_3]\subset (\mathcal{S}_{\alpha(\xi, \min F)})^3$. Hence, $F = E_1\cup E_2 \cup E_3$, for $E_1 < E_2 < E_3$ and $E_i\in \mathcal{S}_{\alpha(\xi, \min F)}$. Since $E_i\in \mathcal{S}_{\alpha(\xi, \min F)}$ and $\min E_i\geqslant \min F$, it follows from \eqref{e14} that $E_i\in \mathcal{S}_{\xi}$, for $i = 1, 2, 3$. Therefore, $F\in (\mathcal{S}_{\xi})^3$.
\end{proof}

For a  regular family $\mathcal F$ and $0<\theta<1$, we now define  $1$-norming sets $K=K(\mathcal F,\theta)$ and $K^M=K^M(\mathcal{F}, \theta)$ for $T[\mathcal{F}, \theta]$ and $T_M[\mathcal{F}, \theta]$, respectively. Let $K_0=K_0(\mathcal{F},\theta) = \{0\}\cup \{\pm e^*_n: n\in \mathbb{N}\}$. Assuming that $K_n=K_n(\mathcal F)$ has been defined, we 
define $K_{n+1}=K_{n+1}(\mathcal F)$ by
$$K_{n+1}\ =\ K_n\cup \{\theta(f^*_1 + \cdots + f^*_d)\,:\, d\in \mathbb{N}, f_i^*\in K_n, \mbox{ and }(\supp f^*_i)_{i=1}^d\mbox{ is }\mathcal{F}\mbox{-admissible}\}.$$
Let $K = \cup_{n=0}^\infty K_n$. Then $K$ is a $1$-norming set for $T[\mathcal{F}, \theta]$, i.e., for every $x\in T[\mathcal{F}, \theta]$,
$$\|x\|_{\mathcal{F}, \theta}\ =\ \sup_{f^*\in K}\langle x, f^*\rangle.$$
Similarly, we  define a norming set $K^M$ for $T_M[\mathcal{F}, \theta]$. Let $K_0^M=K_0^M(\mathcal F) = \{0\}\cup \{\pm e^*_n: n\in \mathbb{N}\}$. Assuming that $K_n^M=K_n^M(\mathcal{F})$ has been defined, we define $K^M_{n+1}=K^M_{n+1}(\mathcal F)$ by
$$K^M_{n+1}\ =\ K^M_n\cup \{\theta(f^*_1 + \cdots + f^*_d)\,:\, d\in \mathbb{N}, f_i^*\in K^M_n, (\supp f^*_i)_{i=1}^d\mbox{ is }\mathcal{F}\mbox{-allowable}\}.$$
From now on, by the norming set of $T[\mathcal{F}, \theta]$ and $T_M[\mathcal{F}, \theta]$, we are referring to either $K$ and $K^M$, respectively.

We observe the following. 
Let $f^*\in K^M$, and we write it as $f^*=\sum_{j=1}^\infty a_j e^*_j$,  for some $(a_j)\in c_{00}$, and if $N\subset \mathbb N$
then 
$P_N(f^*)= \sum_{j\in N} a_j e^*_j\in K^M$; moreover, $P_N(f^*)\in K$ if $f^*\in K$.
We call $P_N(f^*)$  the {\em  projection of $f^*$ onto $N$}.

\begin{proof}[Proof of Theorem \ref{mm1}]
Let $x\in c_{00}$ and $f^*$ be arbitrary in the norming set $K = \cup_{n=0}^\infty K_n$ of $T[\mathcal{S}_{\xi}[\mathcal{A}_2], \theta]$. To verify our claim, it suffices to show by induction on $n$ that there exist $h^*_1, h^*_2$, and  $h^*_3$ in the norming set of $T[\mathcal{S}_\xi, \theta]$ such that
\begin{equation}\label{e34}f^*(x)\ \leqslant\ \sum_{i=1}^3 f^*_i(x), \bigcup_{i=1}^3 \supp f^*_i \subset \supp f^*, \mbox{ and } \supp f^*_1 \!<\! \supp f^*_2 \!<\! \supp f^*_3.\end{equation}
If $f^* = \pm e^*_m$ for some $m\in \mathbb{N}$, there is nothing to prove. Suppose that \eqref{e34} holds for every $f^*\in K_n$ for some $n\geqslant 0$. We show that it holds for $f^*\in K_{n+1}\backslash K_n$. Write $f^* = \theta \sum_{i=1}^d f^*_i$, for $f^*_i\in K_n$ and $(\supp f^*_i)_{i=1}^d$ is $\mathcal{S}_{\xi}[\mathcal{A}_2]$-admissible. By the inductive hypothesis, for $i = 1, 2, \ldots, d$, we can find three functionals $g^{*}_{i,1}, g^*_{i,2}$, and $g^*_{i,3}$ in the norming set of $T[\mathcal{S}_\xi, \theta]$ satisfying 
\begin{equation*}f_i^*(x)\ \leqslant\ \sum_{j=1}^3 g^*_{i,j}(x), \bigcup_{j=1}^3 \supp g^*_{i,j}\ \subset\ \supp f^*_i, \mbox{ and } \supp g^*_{i,1} \ <\ \supp g^*_{i,2} \ <\ \supp g^*_{i,3}.\end{equation*}
Lemma \ref{l10} and $(\min\supp f^*_i)_{i=1}^d\in \mathcal{S}_{\xi}[\mathcal{A}_2]$ imply that 
$$\{\min\supp g^*_{i, j}\,:\, 1\leqslant i\leqslant d, 1\leqslant j\leqslant 3\}\ \in\ (\mathcal{S}_{\xi}[\mathcal{A}_2])[\mathcal{A}_3]\ \subset\ (\mathcal{S}_{\xi})^3.$$
Reindex $(g^*_{i, j})_{i, j}$ into the sequence $(g^*_i)_{i=1}^{3d}$ with consecutive supports and find $j_1$ and $j_2$ such that 
\begin{align*}
&\{\min\supp g^*_1, \ldots, \min\supp g^*_{j_1}\}\ \in\ \mathcal{S}_\xi,\\
&\{\min\supp g^*_{j_1+1}, \ldots, \min\supp g^*_{j_2}\}\ \in\ \mathcal{S}_\xi,\mbox{ and }\\
&\{\min\supp g^*_{j_2+1}, \ldots, \min\supp g^*_{3d}\}\ \in\ \mathcal{S}_\xi.
\end{align*}
Then $h_1^* = \theta\sum_{i=1}^{j_1} g^*_i$, $h_2^* = \theta\sum_{i=j_1+1}^{j_2} g^*_i$, and $h_3^* = \theta\sum_{i=j_2+1}^{3d} g^*_i$ are in the norming set of $T[\mathcal{S}_\xi, \theta]$, satisfying $\supp h_1^* < \supp h_2^* < \supp h_3^*$ and $\cup_{i=1}^3 \supp h_i^*\subset \supp f^*$. Furthermore, 
$$
f^*(x)  \ =\ \theta\sum_{i=1}^d f^*_i(x)\ \leqslant\ \theta\sum_{i=1}^d \sum_{j=1}^3 g^*_{i, j}(x) \ =\ \theta\sum_{i=1}^{3d}g^*_i(x) \ =\ \sum_{i=1}^3 h_i^*(x).
$$
This completes our proof. 
\end{proof}

In order to prove Theorem \ref{mm2}, we shall verify
\begin{equation}\label{E:5.3}
K^M(\mathcal{S}_\xi, \theta)\ \subset\ K(\mathcal{S}_\xi[\mathcal{A}_2], \theta), \text{ for all   $\xi<\omega_1$,}
\end{equation}
which implies that
\begin{equation*}
\|x\|_{\mathcal{S}_\xi, \theta, M}\ \leqslant\ \|x\|_{\mathcal{S}_\xi[\mathcal{A}_2], \theta}, \text{ for all }x\in c_{00}.
\end{equation*}
Note that, for a  given  regular regular family $\mathcal F\subset [\mathbb N]^{<\omega}$, the sets
$K(\mathcal{F})$ and $K^M(\mathcal{F})$
are invariant under sign changes of the coefficients, i.e., for all   
$k\in \mathbb N$, $(a_i)_{i=1}^k\subset \mathbb R$, $n_1<n_2<\cdots <n_k$ in $\mathbb N$, and   
$(\varepsilon_j)_{j=1}^k\subset \{\pm1\}$, we have
\begin{align*}
\sum_{j=1}^k a_j e^*_{n_j} \ \in\  K(\mathcal F)&\iff \sum_{j=1}^k\varepsilon_j a_j e^*_{n_j} \ \in\  K(\mathcal F),\mbox{ and }\\
\sum_{j=1}^k a_j e^*_{n_j} \ \in\  K^M(\mathcal F)&\iff \sum_{j=1}^k\varepsilon_j a_j e^*_{n_j} \ \in\  K^M(\mathcal F).
\end{align*}
It is therefore enough to show that 
\begin{equation}\label{E:5.4}
P^M(\mathcal{S}_\xi, \theta)\ \subset\ P(\mathcal{S}_\xi[\mathcal{A}_2], \theta),
\end{equation}
where for a regular family $\mathcal F$, we denote by   $P^M(\mathcal F)$ and $P(\mathcal F)$ the elements 
of    $K^M(\mathcal F)$ and $K(\mathcal F)$, respectively, with nonnegative coefficients.

We fix a regular family $\mathcal F$ and abbreviate $P=P(\mathcal F)$ and   
$P^M=P^M(\mathcal F)$; for $n\in \mathbb N_0$, we let $P_n=P(\mathcal F) \cap K_n(\mathcal F)$ and  $P^M_n=P^M(\mathcal F) \cap K^M_n(\mathcal F)$.

\begin{enumerate}
\item If $f^* \in P_0^M= P_0\setminus\{0\}$, then $f^*=e^*_s$, for some $s\in \mathbb N$.
\item If
 $\ell\in \mathbb N$ and $f^*\in P^M_\ell\backslash P_0^M$,
then the  coefficients of $f^*$ are  of the form $\theta^j$, with  $j\in\{1,2,\ldots, \ell\}$.
We define the {\em  level sets } of $f^*$ to be
$$L_j(f^*)\ =\ \{ s\in\mathbb{N}\,:\,  f^*(e_s)=\theta^j\}, \text{ for $j=1,2,\ldots, \ell$.}$$
It follows that the  sets $L_j(f^*)$ are pairwise disjoint and that
$$f^*\ =\ \sum_{j=1}^\ell \theta^j \sum_{s\in L_j(f^*)} e^*_s.$$
\end{enumerate}

For $A\in [\mathbb N]^{<\omega}$, we  write $1^*_A$ for the functional $\sum_{s\in A} e^*_s$. We will now discuss conditions  under which   a pairwise disjoint sequence $(A_j)_{j=1}^\ell$ in $[\mathbb N]^{<\omega}$
has the property that $\sum_{j=1}^\ell\theta^j1^*_{A_j}\in P_\ell^M$.
If that is the case, and if $A_\ell\not=\emptyset$ (and thus, $\sum_{j=1}^\ell\theta^j1^*_{A_j}\in P_\ell^M\setminus P_{\ell-1}^M$),  we call $(A_j)_{j=1}^\ell$ an {\em $\mathcal F$-allowable sequence of length $\ell$}.

\begin{prop}\label{P:5.1}  Assume that $(A_j)_{j=1}^\ell$ is  an $\mathcal F$-allowable sequence of length $\ell$
and that 
\begin{equation}\label{eh1} A'_\ell\mbox{ is a spread of }A_\ell \mbox{ and is disjoint from }\bigcup_{j=1}^{\ell-1} A_j.\end{equation}
Then $(A_1,A_2,\ldots, A_{\ell-1}, A'_\ell)$ is also $\mathcal F$-allowable of length $\ell$.
\end{prop}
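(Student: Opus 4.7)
The plan is to induct on the length $\ell$. For the base case $\ell=1$, an $\mathcal{F}$-allowable sequence of length $1$ is simply a nonempty set $A_1\in\mathcal{F}$; any spread $A'_1$ of $A_1$ remains in $\mathcal{F}$ by the spreading property, so $(A'_1)$ is $\mathcal{F}$-allowable of length $1$.

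For the inductive step $\ell\geq 2$, I would fix a representation
\[
f^*\ =\ \sum_{j=1}^\ell\theta^j 1^*_{A_j}\ =\ \theta\sum_{i=1}^d f_i^*
\]
that witnesses $f^*\in P_\ell^M$, with $f_i^*\in P_{\ell-1}^M$ and $(\supp f_i^*)_{i=1}^d$ being $\mathcal{F}$-allowable. Tracking how coefficients multiply by $\theta$ gives $A_k=\bigsqcup_i L_{k-1}(f_i^*)$ for $2\leq k\leq \ell$ (with $L_0(f_i^*):=\supp f_i^*$ when $f_i^*\in P_0^M$, empty otherwise); in particular $A_\ell=\bigsqcup_i L_{\ell-1}(f_i^*)$. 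Enumerate $A_\ell=\{a_1<\cdots<a_m\}$ and $A'_\ell=\{a'_1<\cdots<a'_m\}$ with $a_k\leq a'_k$, and for each $i$ define $L'_i=\{a'_k:\,a_k\in L_{\ell-1}(f_i^*)\}$. Then $L'_i$ is a spread of $L_{\ell-1}(f_i^*)$, and because $L'_i\subseteq A'_\ell$ is disjoint from $\bigcup_{j<\ell}A_j\supseteq \bigcup_{k<\ell-1}L_k(f_i^*)$, applying the inductive hypothesis to $(L_k(f_i^*))_{k=1}^{\ell-1}$ and $L'_i$ yields
\[
g_i^*\ :=\ \theta^{\ell-1}1^*_{L'_i}+\sum_{k=1}^{\ell-2}\theta^k 1^*_{L_k(f_i^*)}\ \in\ P_{\ell-1}^M,
\]
with $g_i^*=f_i^*$ for those $i$ having $L_{\ell-1}(f_i^*)=\emptyset$.

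What remains is to verify that $(\supp g_i^*)_{i=1}^d$ is $\mathcal{F}$-allowable, which will give $g^*:=\theta\sum_i g_i^*\in P_\ell^M$; a direct recomputation of level sets then confirms $g^*=\sum_{j=1}^{\ell-1}\theta^j 1^*_{A_j}+\theta^\ell 1^*_{A'_\ell}$, proving that $(A_1,\ldots,A_{\ell-1},A'_\ell)$ is $\mathcal{F}$-allowable of length $\ell$. Pairwise disjointness of the $\supp g_i^*$'s is routine, because the only modification replaces elements of $A_\ell$ (from some $L_{\ell-1}(f_i^*)$) by elements of $A'_\ell$, and $A'_\ell$ is disjoint from every lower $A_j$. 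The main delicate point, where I expect the principal obstacle, is showing $\{\min\supp g_i^*:\,1\leq i\leq d\}\in\mathcal{F}$: one has $\min\supp g_i^*\geq \min\supp f_i^*$ for every $i$ (the modification only raises a minimum or leaves it unchanged), and a short pigeonhole argument then shows that after reordering the $g_i^*$'s by minimum, their minima form a term-wise spread of the correspondingly ordered minima of the $f_i^*$'s; the spreading property of $\mathcal{F}$ closes the verification.
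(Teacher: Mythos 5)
Your proof is correct and follows essentially the same route as the paper's: decompose $f^*=\sum_j\theta^j1^*_{A_j}$ as $\theta\sum_i f_i^*$, push the spread into each $f_i^*$ via the order-preserving bijection $A_\ell\to A'_\ell$, apply the inductive hypothesis to obtain the $g_i^*$'s, and reassemble. The only substantive difference is that you spell out the pigeonhole verification that the new minima $\{\min\supp g_i^*\}$ form a spread of $\{\min\supp f_i^*\}$; the paper leaves this step implicit.
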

\begin{proof} The case for $\ell=1$ follows from the assumption that $\mathcal F$ is spreading.
Assume that the claim is true for some $\ell\in\mathbb N$, and let  $(A_j)_{j=1}^{\ell+1}$
be an $\mathcal F$-allowable sequence of length  $\ell+1$, and let $A'_{\ell+1}$ be a spread of $A_{\ell+1}$. 
We have
$f^*=\sum_{j=1}^{\ell+1} \theta^j1^*_{A_j}\in P_{\ell+1}^M\setminus P_\ell^M,$
and thus we can write  $f^*$ as $f^*=\theta \sum_{i=1}^k f^*_i$
with $k\in \mathbb N$, $f^*_i\in P_\ell^M$, for $i=1,2,\ldots, k$, and $(\mbox{supp}(f^*_i))_{i=1}^k$
being $\mathcal F$-allowable. It follows that 
\begin{align*}
A_1&\ =\ L_1(f^*)\ =\ \{ s\in\mathbb N\,:\, \exists i=1,2,\ldots, k \mbox{ with } f^*_i=e^*_s\},
\intertext{and for $j=2,3,\ldots, \ell+1$, we have}
A_j&\ =\ L_j(f^*)\ =\ \bigcup_{i=1}^k L_{j-1}(f^*_i).
\end{align*}
Since $A'_{\ell+1}$ is a spread of $A_{\ell+1}$, there is a bijection
$\rho:A_{\ell+1}\to A'_{\ell+1}$, with $\rho(a)\geqslant a$ for $a\in A_{\ell+1}$.
For $i=1,2,\ldots, k$, put  $A'_{\ell+1,i} = \rho(L_\ell(f^*_i))$, then 
$A'_{\ell+1,i}$ is  a spread of $L_\ell(f^*_i)$, or $L_\ell(f^*_i)=\emptyset$.
For $i = 1, \ldots, k$, let 
$$g^*_i\ = \ \begin{cases}f^*_i,&\mbox{ if }L_\ell(f^*_i)=\emptyset,\\ \sum_{j=1}^{\ell-1} \theta^j 1^*_{L_j(f^*_i)}+ \theta^\ell 1_{A'_{\ell+1,i}}^*, &\mbox{ otherwise}.\end{cases}$$
By the inductive hypothesis, we have $g^*_i\in P^M_\ell$.
Note that \eqref{eh1} and the fact that $(\supp(f^*_i))_{i=1}^k$ is $\mathcal{F}$-allowable implies that $(\mbox{supp}(g^*_i))_{i=1}^k$
is $\mathcal F$-allowable.
Therefore, we deduce that  
$$\sum_{j=1}^\ell \theta^j 1^*_{A_j}+ \theta^{\ell+1}1^*_{A'_{\ell+1}}\ =\ \theta \sum_{i=1}^k g^*_i\ \in\ P^M_{\ell+1}.$$
This completes our proof. 
\end{proof} 

We introduce a {\em coding }for elements in $P$ and $P^M$ using certain finite trees. For $n\in \mathbb N$, we let $[[\mathbb N]]^n$ be the set of non strictly increasing sequences in $\mathbb N$ of length $n$, and put 
 $[[\mathbb N]]^{<\omega}=\bigcup_{n=1}^\infty [[\mathbb N]]^n$. 
 For $s=(s_1,s_2, \ldots, s_m)$ and $t=(t_1,t_2,\ldots, t_n)$, we say that $t$ {\em extends  }$s$ and write 
 $s\prec t$ if $m<n$, $s_i=t_i$, for $i=1,2,\ldots,m$, and we write 
 $s \preceq t$ if  $s\prec t$ or $s=t$. For $t=(t_1,t_2,\ldots, t_n)=[[\mathbb N]]^{<\omega}$, we call $n$ the {\em length of $t$}, denoted by $|t|$;
 for $j=1,2,\ldots, |t|$, we denote the $j$th-element of $t$ by $t(j)$.
 If $s\in [[\mathbb N]]^{m} $ and $t\in [[\mathbb N]]^{n} $, for some $m,n\in \mathbb N$, and $s_m\leqslant t_1$,
 we put 
 $$(s,t)\ =\ (s_1,s_2,\ldots ,s_m, t_1,t_2,\ldots, t_n)\ \in\ [[\mathbb N]]^{m+n}.$$
 We also define $(\emptyset,s)=(s,\emptyset)=s$.

We introduce the {\em lexicographical order} on $[[\mathbb N]]^{<\omega}$:
for $s=(s_1,s_2,\ldots, s_m)$ and $t=(t_1,t_2,\ldots,t_n)$ in $ [[\mathbb N]]^{<\omega}$, we let 
$$s\ \prec_{\text{lex}}\ t\iff \exists 1\ \leqslant\ i_0 \ <\ \min(m,n)\mbox{ s.t. }  s_i\ =\ t_i\text{ for $1\leqslant i<i_0$ and } s_{i_0}\ <\ t_{i_0}.$$
Note that if $s,t\in[[\mathbb N]]^{<\omega}$ are not comparable with each other with respect to 
``$\preceq$'', then either $s\prec_\text{lex} t$ or $t\prec_\text{lex} s$.

A finite subset $T$ of $[[\mathbb N]]^{<\omega}$ is called an  $\mathcal F$-{\em allowable tree} if 
\begin{enumerate}
\item[(T1)] If $t\in T$ and $s\preceq t$, then $s\in T$.
\item[(T2)] There is a $t_0\in T$ with $|t_0|=1$, which has the property  that $t_0\preceq t$ for all $t\in T$.
\end{enumerate}
From (T1) and (T2), it follows that $T$ is a tree,  for which $t_0$ is the root.
For  $t\in T\setminus \{t_0\}$, we let
$t^-$  be the unique predecessor of $t$ in $T$.
For $t\in T$, we call $s\in T$ an immediate successor of $t$
if $t\prec s$ and $|s|=|t|+1$ and 
denote the set of immediate successors of $t$ by $S^T_t$.  A node
$t\in T$ is called a {\em terminal node} if $S_t^T=\emptyset$.
\begin{enumerate}
\item[(T3)] If $t\in T$ is not a terminal node, then $(t,t(n))=(t(1), t(2),\ldots, t(n),t(n))\in T$, 
and $\{j\in \mathbb N: (t,j)\in T\}\in \mathcal F$.
\end{enumerate}
To state the next condition, we define the {\em support of a node $s\in T$} as
$$\mbox{supp}(s)\ =\ \{ t(|t|)\, :\,  t\succeq s, \mbox{ and } t \text{ is terminal in $T$}\}.$$
\begin{enumerate}
\item[(T4)]  If $s,t\in T$ with $s\prec_{\text{lex}} t$, then $\supp(s)$ and $\supp(t)$ are disjoint.
\end{enumerate}
We call $T$ an {\em $\mathcal F$-admissible tree} if in addition to Conditions  (T1) -- (T3), the following condition holds
\begin{enumerate}
\item[(T5)]   If $s,t\in T$ with $s\prec_{\text{lex}} t$, then $\supp(s)<\supp(t)$.               
\end{enumerate}
For  an $\mathcal F$-allowable tree $T$, we call
$$|T|\ =\ \max\{ |t|\,:\, t\in T\} \ =\  \max\{ |t|\,:\, t \text{ is terminal node of $T$}\} $$
the {\em length of $T$}. Let $\mathcal T^{(\ell)}$ and $\mathcal T^{M, (\ell)}$ be the the collection of all $\mathcal F$-admissible and $\mathcal{F}$-allowable trees of length $\ell$, respectively.
We identify  $\mathcal{T}^{(1)} = \mathcal{T}^{M, (1)}$ with $\mathbb N$ and put $\mathcal T=\bigcup_{\ell=1}^\infty \mathcal{T}^{(\ell)}$ and $\mathcal T^M = \bigcup_{\ell=1}^\infty \mathcal{T}^{M, (\ell)}$.
The following easy proposition exhibits two ways to reduce trees to trees of smaller lengths.
\begin{prop}\label{P:5.2} Let $\ell\in \mathbb N_{\geqslant 2}$ and $T\in \mathcal{T}^{M, (\ell)}$.  
\begin{enumerate}
\item  $T'=\{t\in T\,:\, |t|\leqslant \ell-1\} $ is in $\mathcal{T}^{M, (\ell-1)}$.
\item Let $n_1$ be the root of $T$ and  let $n_1<n_2<\cdots< n_k$ be chosen in $\mathbb N$, so that 
$$\{(n_1,n_i)\,:\, i=1,2,\ldots, k\}\ =\ \{ t\in T\,:\, |t|=2\}.$$
Then for $i=1,2,\ldots, k$, 
$$T^{(i)}\ =\ \{ (n_i,s)\,:\, s\in \{\emptyset\}\cup [[\mathbb N]]^{<\omega} 
\text{ with } (n_1,n_i,s)\in T\}\ \in\ \bigcup_{j=1}^{\ell-1} \mathcal{T}^{M, (j)},$$
and at least for some $i=1,2,\ldots, k$, we have $T^{(i)}\in \mathcal{T}^{M, (\ell-1)}$.
\end{enumerate}
Moreover, if $T$ is in $\mathcal{T}^{(\ell)}$, then
$T'$ as defined in (1) is in $\mathcal{T}^{(\ell-1)}$, and $T^{(i)}$, $i=1,2,\ldots, k$, as defined in (2), are in $\cup_{j=1}^{\ell-1}\mathcal{T}^{(j)}$.
\end{prop}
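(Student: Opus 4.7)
My plan is to verify the tree axioms (T1)--(T4), and (T5) in the admissible case, directly for $T'$ and then for each $T^{(i)}$. Both parts are essentially bookkeeping once I track how the function $\supp$ transforms under each operation, so the main work will be controlling supports.

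For part (1), the axioms (T1) and (T2) are immediate from $T'=\{t\in T:|t|\leqslant\ell-1\}$, since truncation preserves initial segments and keeps the original root. For (T3), a node $t\in T'$ is non-terminal in $T'$ precisely when $|t|<\ell-1$, and for such $t$ the set $\{j:(t,j)\in T'\}$ coincides with $\{j:(t,j)\in T\}$, so (T3) transfers verbatim from $T$. The crucial step for (T4) and (T5) is the inclusion $\supp_{T'}(s)\subseteq\supp_T(s)$ for every $s\in T'$. To establish this, I consider any terminal node $u$ of $T'$ with $u\succeq s$: either $u$ is already terminal in $T$, or $|u|=\ell-1$ and $u$ has a successor in $T$; in the latter case, (T3) applied in $T$ yields $(u,u(|u|))\in T$, and since this has length $\ell=|T|$, it is forced to be terminal in $T$. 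Either way the contribution $u(|u|)$ to $\supp_{T'}(s)$ still lies in $\supp_T(s)$. Once this inclusion is in hand, the disjointness in (T4) and the ordering in (T5) for $T'$ follow directly from the corresponding properties in $T$.

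For part (2), I exploit the bijection $(n_i,s)\leftrightarrow(n_1,n_i,s)$ between $T^{(i)}$ and the subtree $\{t\in T:(n_1,n_i)\preceq t\}$ of $T$. This bijection preserves the successor relation, the lexicographic order, and the last coordinate of each node, whence $\supp_{T^{(i)}}(n_i,s)=\supp_T(n_1,n_i,s)$. Conditions (T1)--(T5) for $T^{(i)}$ then follow immediately from the corresponding conditions in $T$ restricted to this subtree; in particular, the $\mathcal{F}$-membership of $\{j:(n_1,n_i,s,j)\in T\}$ inherited from (T3) in $T$ is exactly what is needed for (T3) at $(n_i,s)$ in $T^{(i)}$. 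The bound $|T^{(i)}|\leqslant\ell-1$ is automatic. For the final claim, since $|T|=\ell$ there exists a terminal node $t\in T$ with $|t|=\ell$; this $t$ begins with $(n_1,n_j)$ for a unique $j\in\{1,\ldots,k\}$, and the corresponding node in $T^{(j)}$ has length $\ell-1$, forcing $|T^{(j)}|=\ell-1$.

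The only step that requires genuine attention is the support comparison in part (1), where one must use (T3) to show that truncation does not enlarge any support; everything else is direct unpacking of the definitions, and the admissible case adds no new difficulty, since the ``$<$'' ordering in (T5) is inherited automatically from either the inclusion $\supp_{T'}\subseteq\supp_T$ in (1) or the equality of supports in (2).
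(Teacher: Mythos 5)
The paper labels Proposition \ref{P:5.2} as ``easy'' and gives no proof, so there is no argument in the text to compare against; your direct verification of (T1)--(T5) for $T'$ and, via the length-preserving bijection $(n_i,s)\leftrightarrow(n_1,n_i,s)$, for each $T^{(i)}$, is the natural route and is correct. The one inaccuracy is cosmetic: you assert that $t\in T'$ is non-terminal in $T'$ \emph{precisely when} $|t|<\ell-1$, whereas a node with $|t|<\ell-1$ that is already terminal in $T$ remains terminal in $T'$; the correct characterization is $|t|<\ell-1$ together with $t$ non-terminal in $T$. This does not damage the argument, since (T3) is vacuous at terminal nodes and the set identity $\{j:(t,j)\in T'\}=\{j:(t,j)\in T\}$ you invoke indeed holds for every $t$ with $|t|<\ell-1$. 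The genuinely load-bearing step is the inclusion $\supp_{T'}(s)\subseteq\supp_T(s)$, which you establish correctly by applying (T3) in $T$ to those nodes of length $\ell-1$ that are terminal in $T'$ but not in $T$; this inclusion (rather than equality) suffices for (T4), and in the admissible case for (T5) as well, exactly as you note.
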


We define now for each $\mathcal F$-allowable tree $T$ a functional $\Psi(T)$ as follows.

\begin{prop}\label{P:5.3}
For $T\in \mathcal T^M$, we define
$$
 \Psi(T)\ =\ \begin{cases}  e^*_n\quad  &\text{if $T=\{n\}\in \mathcal T^{M, (1)}$,}\\
  \sum_{j=2}^\ell \theta^{j-1} 1^*_{\{t(j): t\in T\text{is terminal node, and $|t|=j$}\}}\quad &\text{if $T\in \mathcal{T}^{M, (\ell)}$, $\ell\geqslant2$.}\end{cases}
$$
Then $\Psi$ is a surjective map from $\mathcal{T}^{M, (1)}$ onto $P^M_{0}\setminus\{0\}$, and for $\ell\geqslant2$,
$\Psi$ is a surjective map from $\mathcal{T}^{M, (\ell)}$ onto 
 $P^M_{\ell-1}\setminus P^M_{\ell-2}$.
 
Moreover, if $T$ is a surjective map from $\mathcal{T}^{(1)}$ onto $P_{0}\setminus\{0\}$, and for $\ell\geqslant2$,
$\Psi$ is a surjective map from $\mathcal{T}^{(\ell)}$ onto 
 $P_{\ell-1}\setminus P_{\ell-2}$.
\end{prop}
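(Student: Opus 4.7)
The plan is to proceed by induction on $\ell$, strengthening the claim to require that the preimage tree can be chosen with root equal to $\min \supp f^*$. The base case $\ell = 1$ is immediate from the definitions. For $\ell \geq 2$, the central identity to establish is
\[
\Psi(T) \;=\; \theta \sum_{i=1}^k \Psi(T^{(i)}),
\]
where $T^{(1)}, \ldots, T^{(k)}$ are the subtrees supplied by Proposition \ref{P:5.2}(2) with roots $n_1 < n_2 < \cdots < n_k$. This identity follows by matching a terminal node $t$ of $T$ with $|t| = j \geq 2$ to the terminal node $\tau \in T^{(i)}$ for which $t = (n_1, \tau)$: since $|\tau| = j - 1$ and $t(j) = \tau(j-1)$, the coefficient $\theta^{j-1}$ at $t(j)$ in $\Psi(T)$ matches $\theta \cdot \theta^{(j-1)-1}$ contributed by $\theta \Psi(T^{(i)})$, with the degenerate case $\ell_i = 1$ (so $T^{(i)} = \{n_i\}$ and $\Psi(T^{(i)}) = e^*_{n_i}$) handled by the separate clause of the definition of $\Psi$ and producing exactly the $j=2$ terms.

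For well-definedness, the induction hypothesis gives $\Psi(T^{(i)}) \in P_{\ell-2}^M$; the supports of the $\Psi(T^{(i)})$ are pairwise disjoint by (T4); and the $\mathcal F$-allowability of the minima holds because $\{n_i\}_{i=1}^k \in \mathcal F$ by (T3), while $\min \supp \Psi(T^{(i)}) \geq n_i$, so sorting those minima increasingly produces a spread of $\{n_1, \ldots, n_k\}$, which remains in $\mathcal F$ by the spreading property. Moreover, Proposition \ref{P:5.2}(2) guarantees some $T^{(i_0)} \in \mathcal T^{M,(\ell-1)}$, and by the induction hypothesis $\Psi(T^{(i_0)}) \in P_{\ell-2}^M \setminus P_{\ell-3}^M$ introduces a coefficient $\theta^{\ell-1}$ into $\Psi(T)$, placing $\Psi(T)$ in $P_{\ell-1}^M \setminus P_{\ell-2}^M$.

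For surjectivity, given $f^* \in P_{\ell-1}^M \setminus P_{\ell-2}^M$, write $f^* = \theta \sum_{i=1}^k f^*_i$ with $f^*_i \in P_{\ell-2}^M$, $(\supp f^*_i)_{i=1}^k$ being $\mathcal F$-allowable, and at least one $f^*_{i_0}$ of rank exactly $\ell - 2$ (since $f^* \notin P_{\ell-2}^M$). Reindex so that $n_i := \min \supp f^*_i$ satisfies $n_1 < \cdots < n_k$, apply the strengthened induction hypothesis to each $f^*_i$ to produce a tree $T^{(i)}$ with root $n_i$ and $\Psi(T^{(i)}) = f^*_i$, and define $T$ as $\{n_1\}$ together with $\{(n_1, \tau) : \tau \in T^{(i)},\ i=1,\ldots,k\}$; these concatenations remain non-decreasing because $n_1 \leq n_i$. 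Condition (T3) follows from $\{n_i\}_{i=1}^k \in \mathcal F$ combined with the inductive (T3) in each $T^{(i)}$, and (T4) follows from disjointness of the $\supp f^*_i$ together with the inductive (T4). The recursive identity then delivers $\Psi(T) = f^*$ with $\mathrm{root}(T) = n_1 = \min \supp f^*$. The admissible case is identical after replacing ``allowable'' by ``admissible'' and (T4) by (T5), these being matched by the ordered supports of an $\mathcal F$-admissible decomposition of $f^*$.

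The main obstacle is the bookkeeping around the root in the surjectivity step: the plain inductive statement does not pin down $\mathrm{root}(T^{(i)})$, yet (T3) at the root of the assembled tree demands $\{\mathrm{root}(T^{(i)})\}_{i=1}^k \in \mathcal F$. Strengthening the inductive hypothesis to enforce $\mathrm{root}(T^{(i)}) = \min \supp f^*_i$ aligns this demand with the $\mathcal F$-allowable condition on $(\supp f^*_i)_{i=1}^k$ and closes the induction cleanly.
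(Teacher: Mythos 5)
Your proof is correct and follows essentially the same route as the paper: induction on $\ell$, the key identity $\Psi(T)=\theta\sum_i\Psi(T^{(i)})$ via the decomposition of Proposition~\ref{P:5.2}(2), and reassembly for surjectivity. Your ``strengthened'' inductive hypothesis (root $= \min\supp f^*$) is in fact automatic from (T3) and (T4) --- the leftmost branch of any allowable tree is $(n_1,n_1,\dots,n_1)$, so the root always equals the minimum of the support; in particular $\min\supp\Psi(T^{(i)}) = n_i$ with equality, not merely $\geq$ --- but making this explicit is a harmless bookkeeping device that the paper leaves implicit.
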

\begin{proof} We prove the claim by induction on $\ell\in \mathbb N$. For $\ell=1$, $\Psi$  is clearly a bijective map between $\mathcal{T}^{M, (1)}$ and $P_0^M\setminus\{0\}$.

Assume that there is $\ell\geqslant 2$ such that the claim is true for all $\ell'<\ell$ and 
let $T\in \mathcal{T}^{M, (\ell)}$. Choose $k\in\mathbb N$ and $T^{(i)}\in \bigcup_{j=1}^{\ell-1}\mathcal{T}^{M, (j)}$,  for $i=1,2,\ldots, k$, as in 
Proposition \ref{P:5.2} item (2). Denote, for each $i=1,2,\ldots, k$, the root of $T^{(i)}$ by $n_i$ such that $n_1<n_2<\cdots<n_k$, and $n_1$ is the root of $T$. By Condition (T3), we have $\{n_i: i=1,2,\ldots,k\}\in \mathcal F$.
Let $A_1=\{ i=1,2,\ldots, k:|T^{(i)}| =1\}$ 
and  $A_2=\{ i=1,2,\ldots, k:|T^{(i)}| >1\}$. 
We note that 
\begin{align*}
\Psi(T)&\ =\ \sum_{j=2}^\ell \theta^{j-1} 1^*_{\{t(j)\,:\, t\in T\text{ is a terminal node, and $|t|=j$}\}}\\
&\ =\ \theta\sum_{j=2}^\ell \theta^{j-2} 1^*_{\{t(j)\,:\, t\in T\text{ is a terminal node, and $|t|=j$}\}}\\
&\ =\ \theta\Big( \sum_{i\in A_1} e^*_{n_i}+ \sum_{i\in A_2} \sum_{j=2}^{\ell-1} \theta^{j-1}1^*_{\{t(j)\,:\, t\in T^{(i)}\text{ is a terminal node and $|t|=j$}\}}\Big)
\ =\ \theta \sum_{i=1}^k \Psi(T^{(i)}). 
\end{align*} 
It follows from the induction hypothesis that $\Psi(T^{(i)})\in P_{\ell-2}^M$, for $i=1,2,\ldots, k$,
and since 
$(\min \supp(\Psi(T^{(i)}))=\{n_1,n_2,\ldots, n_k\}\in \mathcal F$, we know that
$\Psi(T)\in P^{M}_{\ell-1}$. Since for at least one $i=1,2,\ldots$,  we have 
$T^{(i)}\in \mathcal T^{M, (\ell-1)}$, it follows that $\Psi(T)\in P^{M}_{\ell-1}\setminus P^{M}_{\ell-2}$.

In order to verify  that $\Psi: \mathcal T^{M, (\ell)} \rightarrow P^M_{\ell-1}\setminus P^M_{\ell-2}$
is surjective, let $f^*\in P^M_{\ell-1}\setminus P^M_{\ell-2}$ and write 
$f^*=\theta \sum_{i=1}^k f^*_i$, with $f^*_i\in P^M_{\ell-2}$ and $\{\mbox{supp}(f^*_i): i=1,2,\ldots, k\}$ is $\mathcal F$-allowable. Note that in the case $\ell >2$, for at least one
 $i=1,2,\ldots, k$ , we have that $f^*_i\not\in P^M_{\ell-3}$. By the inductive hypothesis, we can write 
 $f^*_i=\Psi(T^{(i)})$ for $i=1,2,\ldots, k$, where $T^{(i)}\in \mathcal \cup_{j=1}^{\ell-1} \mathcal{T}^{M, (j)}$. We can assume that $n_1<n_2<\cdots< n_k$, where $n_i$ is the root of $T^{(i)}$. We put
 $$T\ =\ \{n_1\} \cup\{(n_1,n_i, s)\,:\, i=1,2,\ldots, k, s\in [[\mathbb N]]^{<\omega}\cup\{\emptyset\}\text{ with }
 (n_i,s)\in T^{(i)}\}$$
and note that $T\in \mathcal{T}^{M, (\ell)}$ and $\Psi(T)=f^*$.
 
 The moreover part follows from the moreover part in Proposition \ref{P:5.2}.
\end{proof}

Finally, we state our key lemma which we shall prove by induction. We assume that $\mathcal F$ has the following property (by Theorem \ref{m1}, $\mathcal{S}_\xi$, $\xi<\omega_1$
has this property):
\begin{enumerate}
\item[($*$)]  If $A_1, \ldots, A_n$, $n\in \mathbb{N}$, are nonempty sets in $\mathcal{F}$ with 
$\min A_1 < \cdots < \min A_n$, then there exist nonempty sets $A_1' < \cdots < A_n'$ in $\mathcal{F}$ such that 
$\cup_{i=1}^n A_i = \cup_{i=1}^n A_i'$ and $\min A_i\leqslant \min A_i'$, for $i = 1, \ldots, n$. 
\end{enumerate} 

We denote the root of a tree $T$ by $r(T)$ and denote $\supp(r(T))$ by $\supp T$.  Recall that 
$$(\mathcal{F})^s \ :=\ \left\{\cup_{i=1}^s E_i\,:\, E_1 < \cdots < E_s, E_i\in \mathcal{S}\right\},$$
which, by Property ($*$), equals to the family
$$(\mathcal{F})_s \ :=\ \left\{\cup_{i=1}^s E_i\,:\, E_1, \ldots, E_s\mbox{ are pairwise disjoint}, E_i\in \mathcal{S}\right\}.$$

\begin{lem}\label{L:5.5} (The key lemma)  Let $T_1, T_2, \ldots, T_\ell$ be  $\mathcal{F}$-allowable trees, having pairwise disjoint supports, with $\{ r(T_i):i=1,2,\ldots,\ell\}\in (\mathcal{F})^s$ for some given $s\in \mathbb{N}$. Then there exist $\mathcal{F}[\mathcal{A}_2]$-admissible trees $T_1', T_2', \ldots, T'_{\ell'}$,  such that
\begin{enumerate}
\item $\supp T_1' < \supp T_2' < \cdots < \supp T'_{\ell'}$.
\item $\sum_{i=1}^\ell \Psi(T_i) = \sum_{i=1}^{\ell'} \Psi (T'_i)$.
\item $\ell' \leqslant \ell + k$, where $k = \#\{T_i: |T_i| = 1\}$. 
\item $\{r(T'_i): i=1,2,\ldots,\ell'\}\in (\mathcal{F}[\mathcal{A}_2])^s$. More precisely, if
 $\{r(T_i):i=1,\ldots,\ell\}$ is decomposed as  the union of $\mathcal{F}$ sets  $B_1 < B_2 < \cdots < B_s$, the set $\{ r(T'_i):i=1, \ldots,\ell'\}$ can be written as the union of  consecutive $\mathcal{F}[\mathcal{A}_2]$ sets $B'_1 < B'_2 < \cdots < B'_s$ such that $\min B_i \leqslant \min B_i'$ for $i = 1, \ldots, s$.

\end{enumerate}
\end{lem}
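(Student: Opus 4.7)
Plan:

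The plan is to prove the lemma by induction on $N := \max_{1 \leq i \leq \ell} |T_i|$.

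For the base case $N = 1$, all $T_i$ are singletons with roots $\{r(T_i)\}_{i=1}^\ell = B_1 \cup \cdots \cup B_s$ in $\mathcal{F}$. A single application of Property $(*)$ yields $B_1' < \cdots < B_s'$ in $\mathcal{F} \subseteq \mathcal{F}[\mathcal{A}_2]$ with the same union and $\min B_j \leqslant \min B_j'$. Listing the singletons in the increasing order of their roots (partitioned according to the $B_j'$'s) gives $T_1',\ldots,T_\ell'$ satisfying all four conditions at once (note $k=\ell$ in the base case).

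For the inductive step $N \geqslant 2$, separate the indices into $I_1 = \{i:|T_i|=1\}$ and $I_{\geqslant 2}$. For each $i\in I_{\geqslant 2}$, Proposition \ref{P:5.2}(2) decomposes $T_i$ into its root $r_i$ and $\mathcal{F}$-allowable subtrees $T_i^{(1)},\ldots,T_i^{(p_i)}$ of length at most $N-1$, with $S_i := \{r(T_i^{(j)})\}_j \in \mathcal{F}$ and $\min S_i = r_i$ (by iterating the self-repeat in (T3)). Since each $S_i \subseteq \supp T_i$ and the $\supp T_i$ are pairwise disjoint, the $S_i$'s are pairwise disjoint; reordering $I_{\geqslant 2}=\{i_1<i_2<\cdots\}$ by increasing $r_i$ and invoking Property $(*)$ puts $\bigcup_i S_i$ into $(\mathcal{F})^{|I_{\geqslant 2}|}$. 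Apply the inductive hypothesis to the full subtree collection with $s'=|I_{\geqslant 2}|$ to produce $\mathcal{F}[\mathcal{A}_2]$-admissible trees $\tilde S_1,\ldots,\tilde S_M$ with consecutive supports, the sum of $\Psi$'s preserved, and new chunks $C_1'<\cdots<C_{|I_{\geqslant 2}|}'$ in $\mathcal{F}[\mathcal{A}_2]$ with $\min S_{i_j}\leqslant \min C_j'$. For each $i_j$, reassemble an $\mathcal{F}[\mathcal{A}_2]$-admissible tree $\tilde T_{i_j}$ by placing the $\tilde S_m$'s with $r(\tilde S_m)\in C_j'$ as immediate subtrees under a common root (either $r_{i_j}$ itself if it lies in $C_j'$, or $\min C_j'$ otherwise), so that $\Psi(\tilde T_{i_j})=\theta\sum_{r(\tilde S_m)\in C_j'}\Psi(\tilde S_m)=\Psi(T_{i_j})$ by the IH.

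Since $r(\tilde S_m)=\min\supp\tilde S_m$ is increasing in $m$ and the chunks $C_j'$ occupy consecutive blocks of the $\tilde S_m$-indexing, each $\tilde T_{i_j}$ already has a contiguous support and the collection $\{\tilde T_{i_j}\}_j$ satisfies (1) internally. Combined with the unchanged singletons $\{T_i\}_{i\in I_1}$, the root-set is a spread of $\{r_i\}_i\in(\mathcal{F})^s$ and so remains in $(\mathcal{F}[\mathcal{A}_2])^s$, giving (4). The main obstacle is a singleton $T_q$, $q\in I_1$, whose root $r_q$ falls strictly inside some $\supp\tilde T_{i_j}$: then $\tilde T_{i_j}$ and $T_q$ interleave. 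Resolve this by splitting $\tilde T_{i_j}$ at this position; because its underlying $\tilde S_m$'s already have consecutive supports, the split cleanly partitions the block of relevant $\tilde S_m$'s into two sub-blocks, each producing an $\mathcal{F}[\mathcal{A}_2]$-admissible tree (the new root being the minimum of the second sub-block, an element of $C_j'\subseteq\mathcal{F}[\mathcal{A}_2]$). Each interleaving singleton causes at most one extra tree, yielding $\ell'\leqslant\ell+k$ in (3); the spreading property of the regular family $\mathcal{F}[\mathcal{A}_2]$ guarantees that the enlarged root-set still admits a decomposition $B_1'<\cdots<B_s'$ in $\mathcal{F}[\mathcal{A}_2]$ with $\min B_j\leqslant \min B_j'$, maintaining (4).
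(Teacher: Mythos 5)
Your proposal follows the paper's strategy exactly: induct on the maximal tree length, decompose the non-singleton $T_i$'s one level via Proposition~\ref{P:5.2}, apply the induction hypothesis to the resulting collection of shorter trees, and then glue the output trees back together under new roots while absorbing the singletons. But there are two genuine problems in the last stage, both of which the paper handles through the functional $\Psi$ and Proposition~\ref{P:5.6}, and which your argument does not.

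First, you assert that a singleton root $r_q$ falling strictly inside $\supp\widetilde{T}_{i_j}$ yields a ``clean'' split of the block of $\widetilde{S}_m$'s into two sub-blocks. This is false in general: the supports $\supp\widetilde{S}_m$ are consecutive but not intervals, so $r_q$ can land strictly between two elements of a \emph{single} $\supp\widetilde{S}_m$ (e.g.\ $\supp\widetilde{S}_m=\{3,5\}$ and $r_q=4$). Then you must split $\widetilde{S}_m$ itself, and nothing in your argument produces the two resulting $\mathcal{F}[\mathcal{A}_2]$-admissible pieces. The paper avoids this by never manipulating the trees directly at this step: it invokes the remark that projections of elements of the norming set onto arbitrary coordinate sets remain in the norming set, then uses the surjectivity of $\Psi$ (Proposition~\ref{P:5.3}) to realize each projection of $\Psi(\widehat T_q)$ onto an interval $(\widehat a_j,\widehat a_{j+1})$ as $\Psi(T_i')$ for some admissible tree $T_i'$. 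That device cuts cleanly through a $\widetilde{S}_m$ at the functional level, which your combinatorial splitting cannot.

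Second, your intermediate claim $\Psi(\widetilde T_{i_j})=\Psi(T_{i_j})$ is false. The induction hypothesis only preserves the \emph{total} sum $\sum_{i,j}\Psi(T_i^{(j)})=\sum_m\Psi(\widetilde S_m)$, not the sum restricted to each chunk $C_j'$. Indeed, after Property~$(*)$ reorders the roots into consecutive $\mathcal F$ sets, the $\widetilde S_m$'s with roots in $C_j'$ need not carry the same coordinates as the pieces $T_{i_j}^{(1)},\ldots,T_{i_j}^{(k_{i_j})}$ (a short example: two two-element allowable trees with supports $\{5,10\}$ and $\{7,8\}$ recombine into $\{5,7\}$ and $\{8,10\}$). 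This does not sink the proof, since only the total in (2) is needed, but the paper is careful to state exactly and only the total-sum equality $\sum\Psi(\widehat T_i)=\sum\Psi(T_i)$. Finally, your treatment of item~(4) --- that ``the spreading property'' handles the extra roots created by interleaving singletons --- is a hand-wave; the inserted roots are not obtained as a spread of the old ones, and the paper instead does this accounting explicitly via Proposition~\ref{P:5.6}, which shows the enlarged root set lies in $(\mathcal F[\mathcal A_2])^s$ and respects the block structure.
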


To see that Lemma \ref{L:5.5} implies Theorem \ref{mm2}, we simply let $\ell = 1$ and $T_1$ be an $\mathcal{F}$-allowable tree. If $|T_1| = 1$, then there is nothing to prove. If $|T_1|\geqslant 2$, the lemma gives exactly one $\mathcal{F}[\mathcal{A}_2]$-admissible tree $T_1'$ such that $\Psi(T_1') = \Psi(T_1)$, and we are done. To prove Lemma \ref{L:5.5}, we need the following easy observation.

\begin{prop}\label{P:5.6}
Let $k,m\in \mathbb N$, $a_1<a_2<\cdots<a_k$ in $\mathbb N$, $B_1<B_2< \cdots< B_m$ in $[\mathbb N]^{<\omega}$, 
have the following properties:
$\{a_1,a_2,\ldots, a_k\}\cap \bigcup_{j=1}^m B_j=\emptyset$ and 
$\{a_1,a_2,\ldots, a_k\}\cup\left\{\min B_j: j=1,2,\ldots, m\right\}\in (\mathcal F)^s$, for some $s\in \mathbb N$. Then the family 
$$
\mathcal{D}\ =\ \big(\{ (a_i,a_{i+1})\cap B_j\,:\, 0\leqslant i\leqslant k, 1\leqslant j \leqslant m\}\setminus\{\emptyset\}\big)\cup \{\{a_i\}\,:\, 1\leqslant i\leqslant k\},
$$
where $a_0=0$, and $a_{k+1}=\infty$  can be ordered into 
$(D_{j})_{j=1}^\ell$ with $D_1<D_2<\cdots<D_\ell$, $\ell\leqslant m+2k$, and
 $$\left\{\min D_j : j=1,2,\ldots, \ell\right\} \ \in\ (\mathcal{F}[{\mathcal A}_2])^s.$$
\end{prop}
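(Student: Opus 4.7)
The plan has two parts. First, I would verify by a combinatorial bookkeeping that $\mathcal{D}$ is $<$-ordered and has at most $m+2k$ elements. Second, I would exhibit an explicit decomposition $\{\min D_j:j\leqslant\ell\}=C'_1\cup\cdots\cup C'_s$ in $(\mathcal{F}[\mathcal{A}_2])^s$ by pairing each ``new piece-minimum'' of a $B_j$ with the $a_i$ that produced it.

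For the first part, two distinct nonempty pieces are $<$-comparable: either they lie in disjoint open intervals between consecutive $a_i$'s, or they share such an interval but belong to different $B_j$'s (which are themselves $<$-separated). Inserting the singletons $\{a_i\}$ at their natural positions gives the chain $D_1<D_2<\cdots<D_\ell$. To bound $\ell$, I would fix $j$ and set $q_j:=|\{i:a_i\in(\min B_j,\max B_j)\}|$; a direct count shows that $B_j$ contributes at most $q_j+1$ nonempty pieces. Because the intervals $(\min B_j,\max B_j)$ are pairwise disjoint, $\sum_j q_j\leqslant k$, so the nonempty pieces number at most $m+k$, giving $\ell\leqslant m+2k$ after adding the $k$ singletons.

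For the second part, I would fix the given decomposition $\{a_1,\ldots,a_k\}\cup\{\min B_j:j\leqslant m\}=C_1\cup\cdots\cup C_s$ with $C_1<C_2<\cdots<C_s$ and $C_t\in\mathcal{F}$. Whenever $a_i$ lies in the interior of some (necessarily unique) $B_j$ and the piece $(a_i,a_{i+1})\cap B_j$ is nonempty, I would set $b_i:=\min\bigl((a_i,a_{i+1})\cap B_j\bigr)$; these $b_i$'s are precisely the non-leftmost piece-minima, so $\{\min D_j:j\leqslant\ell\}=\bigcup_{t=1}^{s}C'_t$ where $C'_t:=C_t\cup\{b_i:a_i\in C_t\}$. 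I would then partition each $C'_t$ into the pairs $\{a_i,b_i\}$ together with singletons for the remaining elements of $C_t$; since the minima of these blocks are exactly $C_t\in\mathcal{F}$, the required $C'_t\in\mathcal{F}[\mathcal{A}_2]$ will follow once the blocks are shown to be consecutive inside $C'_t$ and $C'_1<C'_2<\cdots<C'_s$ is established.

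The main technical obstacle, though routine, will be this consecutivity check. The key inequalities are $a_i<b_i<a_{i+1}$ (from the definition of $b_i$) and $b_i\leqslant\max B_j<\min B_{j+1}$: together they exclude any intervening $a_{i'}$ or $\min B_{j'}$ from $(a_i,b_i)$. A brief case analysis on whether a competing $b_{i'}$ comes from $B_j$ itself (forcing $b_{i'}<a_i$ if $i'<i$ or $b_{i'}\geqslant a_{i+1}$ if $i'>i$) or from a different $B_{j'}$ (forcing $b_{i'}<\min B_j$ or $b_{i'}>\max B_j$ by separation of the $B$-sets) handles the remaining cases. The same inequalities give $\max C'_t<\min C_{t+1}\leqslant\min C'_{t+1}$, completing the argument.
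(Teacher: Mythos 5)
Your proposal is correct and follows essentially the same strategy as the paper's proof: classify the elements of $\mathcal{D}$ by whether they are singletons $\{a_i\}$, leftmost pieces of a $B_j$ (minimum $\min B_j$), or ``new'' pieces whose minimum $b_i$ lies strictly between $a_i$ and $a_{i+1}$, observe that the $b_i$'s form a subset of a spread of $\{a_1,\ldots,a_k\}$, and then pair each $a_i$ with its $b_i$ inside the decomposition $C_1<\cdots<C_s$ to land in $(\mathcal{F}[\mathcal{A}_2])^s$. You spell out the consecutivity check in more detail than the paper (which compresses it into the one-line observation that the new minima form a subset of a spread of the $a_i$'s), and your per-$B_j$ count $q_j+1$ is a slight variant of the paper's direct classification of elements of $\mathcal{D}$ into three kinds, but both give the same bound $\ell\leqslant m+2k$ and the same conclusion.
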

\begin{proof}
\textbf{Ordering:} Let $D=(a_i,a_{i+1})\cap B_j\not=\emptyset$ and 
$D'=(a_{i'},a_{i'+1})\cap B_{j'}\not=\emptyset$. Then if $j<j'$, it follows that $D<D'$,
and if $j=j'$, and $i<i'$, then also $D<D'$.
Thus we can order $\mathcal D$ into $D_1<D_2<\ldots <D_l$.

\noindent\textbf{Size of $\mathcal{D}:$} By construction, it follows for each $D\in \mathcal D$, that 
either $D=\{a_i\}$, for some $i=1,2\ldots k$, or $\min D=\min B_j$, for some $j=1,2,\ldots, m$,
or there  is an $i=1,\ldots k$, and a $j=1,2,\ldots m$, for which $\min B_j <a_i<\min D\leqslant \max B_j$.
Since there can at most be $k$ elements in $\mathcal{D}$ of the third kind, it follows that $\ell\leqslant 2k+m$.

\noindent\textbf{From $\mathcal{F}$ to $\mathcal{F}[\mathcal{A}_2]$:}  Note that the 
elements of $\{\min D: D\in \mathcal{D}\}$, which are of the third kind, form a subset of a spread of 
$\{a_i:i=1,2, \ldots, k\}$, which implies that $\{\min D: D\in \mathcal{D}\} \in (\mathcal F[\mathcal{A}_2])^s$.
\end{proof} 
Let us give a short  overview of  the proof:
We will prove the lemma by induction on 
$$\max\text{length } \mathcal{C}\ :=\ \max\{|T|\,:\, T\in \mathcal{C}\}.$$
In the induction step, we decompose each $T_i$, which are not singletons, into trees of smaller lengths,
using Proposition \ref{P:5.2}. We apply the induction hypothesis to this, possibly much larger, collection of trees of smaller lengths.
 Then we ``glue'' the obtained new collection of $\mathcal{F}[\mathcal{A}]_2)$-admissible trees appropriately together. The possible existence 
 of singletons among the family $(T_i)_{i=1}^{\ell}$ may make it necessary to increase the number of trees so that the roots of $(T_i')_{i=1}^{\ell'}$ may
 only be in $(\mathcal{F}([\mathcal{A}]_2))^s$ instead of $(\mathcal F)^s$.

\begin{proof}[Proof of Lemma \ref{L:5.5}]
Let $\mathcal{C}$ be a finite collection of trees. We prove our claim by induction on the maximum length of trees in our collection.

Base case: $\max \text{length }\mathcal{C} = 1$. For $\ell, s\in \mathbb{N}$, suppose that $\mathcal{C}$ contains $\ell$ singleton trees $T_1, T_2,\ldots, T_\ell$ and $(r(T_i))_{i=1}^\ell\in (\mathcal{F})^s$. We simply set $T_i' = T_i$, for $i = 1, \ldots, \ell$. 

Inductive hypothesis: suppose that the lemma holds for all finite collections of trees $\mathcal{C}$ with $\max\text{length }\mathcal{C}\leqslant u$ for some $u\geqslant 1$. We shall show that it holds 
for all finite collections of trees $\mathcal{C}$
with $\max\text{length }\mathcal{C} = u+1$. 

Pick $\ell$ and $s\in \mathbb{N}$. Let $\mathcal{C}$ consist of $\ell$ disjoint $\mathcal{F}$-allowable trees $T_1, \ldots, T_\ell$ with $\max_{1\leqslant i\leqslant \ell} |T_i| = u+1$ and $(r(T_i))_{i=1}^s\in (\mathcal{F})^s$. Let $A_1 = \{n: |T_n| = 1\}$ and $A_2 = \{n: |T_n| > 1\}$,
and order $A_2$ into $ a_1 < a_2 < \cdots < a_{m}$. For each $n\in A_2$, let $T^{(i)}_{n}$, $i=1, \ldots, k_n$, be the trees defined as in Proposition \ref{P:5.2} item (2) for $T_n$.  For the collection of disjoint $\mathcal{F}$-allowable trees
$$\mathcal{C}'\ = \ \{T^{(i)}_n\,:\, n\in A_2, i = 1, \ldots, k_n\},$$
it follows that  $\max\text{length }\mathcal{C}' \leqslant u$ and $\{r(T): T\in \mathcal{C}'\}\in (\mathcal{F})_{m}$. By Property ($*$), $\{r(T): T\in \mathcal{C}'\}\in (\mathcal{F})^{m}$, and we can write 
$$\{r(T): T\in \mathcal{C}'\} \ =\ \bigcup_{i\in A_2} B_i, \quad B_{a_1} \ <\  \cdots\ <\ B_{a_m}, B_i\ \in\ \mathcal{F}, \min B_i\ \geqslant\ r\left(T_i\right).$$
Applying the inductive hypothesis to $\mathcal{C}'$, we obtain $\mathcal{F}[\mathcal{A}_2]$-admissible trees $\widetilde{T}_1, \widetilde{T}_2, \ldots, \widetilde{T}_v$ with $\supp \widetilde{T}_1< \supp \widetilde{T}_2 < \cdots < \supp \widetilde{T}_v$, $(r(\widetilde{T}_i))_{i=1}^v\in (\mathcal{F}[\mathcal{A}_2])^{m}$, and we can write 
$$(r(\widetilde{T}_i))_{i=1}^v \ =\ \bigcup_{i\in A_2}B'_i, \quad B'_{a_1} \ <\ \cdots\ <\ B'_{a_m}, B'_i\ \in\ \mathcal{F}[\mathcal{A}_2], \min B'_i\ \geqslant\ \min B_i.$$
For $n\in A_2$, define the following $\mathcal{F}[\mathcal{A}_2]$-admissible trees:
\begin{equation}\label{e201} \widehat T_n\ =\ \{(\min B'_n), (\min B'_n, t): t\in \widetilde{T}_i, r(\widetilde{T}_i)\in B'_n\}, \end{equation}
and for $n\in A_1$, put $\widehat T_n=T_n$.
 
Denote the new collection $\{\widehat T_n: n=1,2,\ldots, \ell\}$  of trees by $\mathcal{C}''$. From above, $\min B'_n\geqslant  r(T_n)$ for each $n\in A_2$; hence, the roots of the new trees in \eqref{e201} form a spread of $\{r(T_n): n\in A_2\}$. Therefore, $\{r(T): T\in \mathcal{C}''\}$ is a spread of $\{r(T): T\in \mathcal{C}\}$ and thus, is in $(\mathcal{F})^s$. Note that the set of singletons in $\mathcal{C}''$ is the same as in $\mathcal{C}'$. By our construction, if $\widehat{T}_i, \widehat{T}_j$ are not singletons with $r(\widehat{T}_i) < r(\widehat{T}_j)$, then $\supp \widehat{T}_i < \supp \widehat{T}_j$. Furthermore, since $\sum_{T\in \mathcal{C}'} \Psi(T) = \sum_{i=1}^v \Psi(\widetilde{T}_{i})$, we know that
$$\sum_{i=1}^\ell \Psi(\widehat{T}_i) \ =\ \sum_{i=1}^\ell\Psi (T_i).$$

We now apply Proposition \ref{P:5.6} to the sequence $(r(\widehat{T}_i))_{|\widehat{T}_i| = 1}$ and the collection of consecutive sets $\{\supp \widehat{T}_i: 1\leqslant i\leqslant \ell, |\widehat{T}_i| > 1\}$ to obtain sets $(D_i)_{i=1}^p$ satisfying $D_1 < D_2 < \cdots < D_p$, $(\min D_i)_{i=1}^p\in (\mathcal{F}[\mathcal{A}_2])^s$, and 
$$p \ \leqslant\ \#\{\widehat{T}_i\,:\, |\widehat{T}_i|>1\} + 2\cdot \#\{\widehat{T}_i\,:\, |\widehat{T}_i| = 1\}\ =\ \ell + |A_1|\ =\ \ell + k.$$

Now, we define $(T'_i)_{i=1}^p$ based on the following cases for each $D_i$, $1\leqslant i\leqslant p$. Order the set  $\{r(\widehat{T}_i):|\widehat{T}_i| = 1\}$ into $ \widehat{a}_1 < \widehat{a}_2 < \cdots < \widehat{a}_k$. For convenience, let $\widehat{a}_0 = 0$ and $\widehat{a}_{k+1} = \infty$. 

Case 1: if $D_i = r(\widehat{T}_j)$ for some $|\widehat{T}_j|=1$, we choose $T'_i = r(\widehat{T}_j)$. 

Case 2: if $D_i = (\widehat{a}_j, \widehat{a}_{j+1})\cap \supp \widehat{T}_q$ for some $0\leqslant j\leqslant k$ and for some $q$ with $|\widehat{T}_q|>1$, we choose an $\mathcal{F}[\mathcal{A}_2]$-admissible tree $T'_i$ for which $\Psi(T'_i)$ is the  projection of $\Psi(\widehat{T}_q)$ onto $(\widehat{a}_j, \widehat{a}_{j+1})$. This can be done thanks to Proposition \ref{P:5.3}.

The trees $(T'_i)_{i=1}^p$ are $\mathcal{F}[\mathcal{A}_2]$-admissible and obviously satisfy Properties (1), (2), and (3) of our claim. In order to  verify that $(T'_i)_{i=1}^p$ satisfies (4), we write
$$\{ r(T_i: i=1,\ldots,\ell\}\ =\ \bigcup_{i=1}^s E_i, \quad E_{1} \ <\  \cdots\ <\ E_{s}, E_i\ \in\ \mathcal{F}.$$
Since $r(\widehat{T}_i)\geqslant r(T_i)$, if we write 
$$\{r(\widehat{T}_i):i=1,\ldots,\ell\}\ =\ \bigcup_{i=1}^s E'_i, \quad E'_{1} \ <\  \cdots\ <\ E'_{s}, |E'_i| = |E_i|,$$
then $E'_i$ is a spread of $E_i$, and $E'_i\in \mathcal{F}$. As in the proof of Proposition \ref{P:5.6}, the set $(r(T'_i))_{i=1}^p = (\min D_i)_{i=1}^p$ is formed by adding to $(r(\widehat{T}_i))_{i=1}^{\ell}$ at most one integer between two consecutive integers in $(r(\widehat{T}_i))_{i=1}^{\ell}$. Therefore, if we write
$$\{r(T'_i): i=1,\ldots,\ell\}\ =\ \bigcup_{i=1}^s E''_i, \quad E''_{1} \ <\  \cdots\ <\ E''_{s},$$
where 
$$E''_i \ =\ [\min E'_i, \min E'_{i+1}-1]\cap \{r(T'_i)\,:\, 1\leqslant i\leqslant p\}\mbox{ and }\min E'_{s+1} = \infty,$$
then $E''_i$ is in $\mathcal{F}[\mathcal{A}_2]$, and $\min E''_i = \min E'_i\geqslant \min E_i$. 
This completes our proof. 
\end{proof}

From Lemma \ref{L:5.5} and Theorem \ref{mm1},  we obtain
\begin{cor} For any $\xi<\omega_1$, $0<\theta<1$, and $x\in c_{00}$,  it follows that 
$$\|x\|_{\mathcal{S}_\xi,\theta}\ \leqslant\ \|x\|_{\mathcal{S}_\xi,\theta, M} \ \leqslant\ \|x\|_{\mathcal{S}_\xi,\theta}.$$
\end{cor}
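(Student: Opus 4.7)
The lower bound $\|x\|_{\mathcal{S}_\xi,\theta} \leqslant \|x\|_{\mathcal{S}_\xi,\theta,M}$ is immediate from the definitions, since every $\mathcal{S}_\xi$-admissible sequence is in particular $\mathcal{S}_\xi$-allowable, so the supremum defining $\|\cdot\|_{\mathcal{S}_\xi,\theta,M}$ is taken over a larger family and dominates the one defining $\|\cdot\|_{\mathcal{S}_\xi,\theta}$.

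For the upper bound, my plan is to chain Lemma \ref{L:5.5} with Theorem \ref{mm1} at the level of norming sets. By the sign-invariance reduction \eqref{E:5.4}, it suffices to verify the positive-cone inclusion $P^M(\mathcal{S}_\xi,\theta) \subseteq P(\mathcal{S}_\xi[\mathcal{A}_2],\theta)$. Given a positive $f^* \in K^M(\mathcal{S}_\xi,\theta)$, Proposition \ref{P:5.3} writes $f^* = \Psi(T)$ for some $\mathcal{S}_\xi$-allowable tree $T$. I then apply Lemma \ref{L:5.5} with $\ell = s = 1$ to the singleton family $\{T\}$: the root set $\{r(T)\}$ trivially lies in $(\mathcal{S}_\xi)^1$, so the lemma produces consecutive $\mathcal{S}_\xi[\mathcal{A}_2]$-admissible trees $T_1', \ldots, T_{\ell'}'$ with $\sum_i \Psi(T_i') = \Psi(T)$. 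If $|T| = 1$, then $T$ is already admissible; otherwise $k = \#\{T_i: |T_i|=1\} = 0$, and the bound $\ell' \leqslant \ell + k = 1$ forces $\ell' = 1$, producing a single $\mathcal{S}_\xi[\mathcal{A}_2]$-admissible tree $T_1'$ with $\Psi(T_1') = f^*$. This is the desired inclusion \eqref{E:5.3}, and therefore $\|x\|_{\mathcal{S}_\xi,\theta,M} \leqslant \|x\|_{\mathcal{S}_\xi[\mathcal{A}_2],\theta}$.

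The last step is a direct invocation of Theorem \ref{mm1}, which yields $\|x\|_{\mathcal{S}_\xi[\mathcal{A}_2],\theta} \leqslant 3\|x\|_{\mathcal{S}_\xi,\theta}$; chaining these two bounds recovers exactly the conclusion of Theorem \ref{mm2}. I do not anticipate any new obstacle in this wrap-up: the genuine difficulty, rearranging a single $\mathcal{S}_\xi$-allowable tree into a single $\mathcal{S}_\xi[\mathcal{A}_2]$-admissible tree realising the same functional, has already been absorbed into the proof of Lemma \ref{L:5.5}, so the corollary is reduced to bookkeeping of the two inclusions at the norming-set level. I note that, read literally, the displayed right-hand side in the corollary should be $3\|x\|_{\mathcal{S}_\xi,\theta}$ rather than $\|x\|_{\mathcal{S}_\xi,\theta}$, since the constant $3$ is unavoidable in Theorem \ref{mm1} and cannot be removed by the present argument; the strict equality $\|x\|_{\mathcal{S}_\xi,\theta,M} = \|x\|_{\mathcal{S}_\xi,\theta}$, if true, would require bypassing the $\mathcal{A}_2$-expansion and directly exhibiting $K^M(\mathcal{S}_\xi,\theta) \subseteq K(\mathcal{S}_\xi,\theta)$, which neither Lemma \ref{L:5.5} nor Theorem \ref{m1} delivers.
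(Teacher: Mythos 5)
Your argument is correct and follows exactly the paper's route: Lemma \ref{L:5.5} applied with $\ell = s = 1$ to a single $\mathcal{S}_\xi$-allowable tree yields the positive-cone inclusion $P^M(\mathcal{S}_\xi,\theta)\subseteq P(\mathcal{S}_\xi[\mathcal{A}_2],\theta)$, hence $K^M(\mathcal{S}_\xi,\theta)\subseteq K(\mathcal{S}_\xi[\mathcal{A}_2],\theta)$, and chaining with Theorem \ref{mm1} reproduces Theorem \ref{mm2}, of which this corollary is simply a restatement. You are also right to flag the typo: the right-hand side of the displayed inequality should read $3\|x\|_{\mathcal{S}_\xi,\theta}$, consistent with Theorem \ref{mm2} and with the $3$-isomorphism advertised in the abstract; the present chain of arguments gives no constant better than $3$, and nothing in Lemma \ref{L:5.5} or Theorem \ref{m1} establishes the isometric inclusion $K^M(\mathcal{S}_\xi,\theta)\subseteq K(\mathcal{S}_\xi,\theta)$ that the literal statement would require.
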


The following observation will be important in the next section.
\begin{cor}\label{C:5.8} Let $\xi<\omega_1$. Any collection of normalized vectors $(y_j)_{j=1}^k$ in $T[\mathcal{S}_\xi, \theta]$
for which $\big(\supp(y_j)\big)_{j=1}^k$ is $\mathcal{S}_\xi$-allowable, is $\frac{\theta}{3}$-equivalent  to the $\ell_1^k$- unit vector basis.
\end{cor}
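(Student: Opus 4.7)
The plan is to deduce Corollary \ref{C:5.8} directly from the two-sided equivalence
$$\|x\|_{\mathcal{S}_\xi,\theta}\ \leqslant\ \|x\|_{\mathcal{S}_\xi,\theta,M}\ \leqslant\ 3\|x\|_{\mathcal{S}_\xi,\theta},$$
which is the content of Theorems \ref{mm1} and \ref{mm2} (and was restated in the corollary preceding this one). The upper bound $\|\sum_j a_j y_j\|_{\mathcal{S}_\xi,\theta}\leqslant \sum_j|a_j|$ is immediate from the triangle inequality and the normalization $\|y_j\|_{\mathcal{S}_\xi,\theta}=1$, so only the lower bound requires work.

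For the lower bound, I will pass to the modified norm, use the allowability hypothesis there, and then pass back. Specifically, set $E_j=\supp(y_j)$. By hypothesis, the $E_j$ are pairwise disjoint and $\{\min E_j: 1\leqslant j\leqslant k\}\in \mathcal{S}_\xi$, so the sequence $(E_j)_{j=1}^k$ is exactly an $\mathcal{S}_\xi$-allowable partition in the sense used to define $\|\cdot\|_{\mathcal{S}_\xi,\theta,M}$. Plugging this particular choice into the supremum appearing in the definition of the modified norm yields
$$\left\|\sum_{j=1}^k a_j y_j\right\|_{\mathcal{S}_\xi,\theta,M}\ \geqslant\ \theta\sum_{j=1}^k\left\|E_j\Big(\sum_{i=1}^k a_i y_i\Big)\right\|_{\mathcal{S}_\xi,\theta,M}\ =\ \theta\sum_{j=1}^k |a_j|\,\|y_j\|_{\mathcal{S}_\xi,\theta,M}.$$
Since the equivalence of norms gives $\|y_j\|_{\mathcal{S}_\xi,\theta,M}\geqslant \|y_j\|_{\mathcal{S}_\xi,\theta}=1$, the right-hand side is at least $\theta\sum_{j=1}^k |a_j|$.

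Applying the other side of the norm equivalence, namely $\|\cdot\|_{\mathcal{S}_\xi,\theta}\geqslant \tfrac{1}{3}\|\cdot\|_{\mathcal{S}_\xi,\theta,M}$, then produces
$$\left\|\sum_{j=1}^k a_j y_j\right\|_{\mathcal{S}_\xi,\theta}\ \geqslant\ \frac{1}{3}\left\|\sum_{j=1}^k a_j y_j\right\|_{\mathcal{S}_\xi,\theta,M}\ \geqslant\ \frac{\theta}{3}\sum_{j=1}^k |a_j|,$$
which is exactly the $\frac{\theta}{3}$-equivalence to the $\ell_1^k$-basis. There is no real obstacle; the argument is essentially a one-line application of the corollary preceding this one. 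The only point worth being a little careful about is that we are using the $y_j$ (arbitrary normalized vectors with allowable supports) rather than just basis vectors, but this is handled purely by the unconditional 1-basis structure of $(e_n)$ together with the fact that $E_j$ is a set, so $E_j(\sum_i a_i y_i)=a_j y_j$.
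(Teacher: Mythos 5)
Your proof is correct and takes essentially the same route as the paper's: pass to the modified norm via the $3$-equivalence, exploit the allowability of the supports $E_j=\supp(y_j)$ to bound $\|\sum_j a_j y_j\|_{\mathcal{S}_\xi,\theta,M}$ from below, and return to the original norm. In fact you are slightly more careful than the paper's displayed computation, which omits the factor $\theta$ coming from the definition of the modified norm and writes $\|a_jy_j\|_{\mathcal{S}_\xi,\theta,M}=|a_j|$ where only $\geqslant$ holds (normalization being in $\|\cdot\|_{\mathcal{S}_\xi,\theta}$, not $\|\cdot\|_{\mathcal{S}_\xi,\theta,M}$); your version, yielding $\frac{\theta}{3}\sum_j|a_j|$, matches the $\frac{\theta}{3}$-equivalence asserted in the statement.
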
 

\begin{proof}
For $(a_j)_{j=1}^k\subset \mathbb{R}$, we have
$$\left\|\sum_{j=1}^k a_jy_j\right\|_{\mathcal{S}_\xi, \theta}\ \geqslant\ \frac{1}{3}\left\|\sum_{j=1}^k a_jy_j\right\|_{\mathcal{S}_\xi, \theta, M}\ \geqslant\ \frac{1}{3}\sum_{j=1}^k \|a_jy_j\|_{\mathcal{S}_\xi, \theta, M}\ =\ \frac{1}{3}\sum_{j=1}^k |a_j|.$$
\end{proof}

\section{Application: the cardinality  of the closed ideals of $\mathcal{L}(T[\mathcal S_\xi,\theta])$}\label{S:6}

We recall some notation. For a Banach space $X$, $\mathcal{L}(X)$ denotes the algebra of bounded linear operators on $X$. A { \em closed ideal in $\mathcal{L}(X)$} is a closed subspace $\mathcal J$ of $\mathcal{L}(X)$ (in the strong operator norm) which is closed under multiplication from the right and left by elements of  $\mathcal{L}(X)$. For  $ \mathcal{T}\subset \mathcal{L}(X)$,
let $\mathcal{J}^{\mathcal{T}}$ denote the closed ideal generated by  $\mathcal{T}$, i.e., 
$$\mathcal{J}^{\mathcal{T}}=\overline{\left\{ \sum_{j=1}^n A_j\circ T_j\circ B_j: n\in\mathbb{N},\, T_j\in\mathcal{T}, \, A_j, B_j\in \mathcal{L}(X), j=1,2,\ldots, n\right\}}.$$
For $T\in \mathcal{L}(X)$,  we write  $\mathcal{J}^T=\mathcal{J}^{\{T\}}$  and call $\mathcal{J}^T$ a {\em singly generated closed ideal}.

In \cite{SchlumprechtZsak18},  it was shown that ${\mathcal{L}}(\ell_p\oplus\ell_q)$, 
$1<p<q<\infty$, has $\mathfrak c$ many singly generated closed subideals, which is, for trivial set theoretical reasons, the largest possible. In \cite{FreemanSschlumprechtZsak17}, this result was extended to the spaces $\ell_p\oplus c_0$, $\ell_p\oplus \ell_\infty$, and $\ell_1\oplus \ell_p$, $1<p<\infty$.
It might be worth mentioning here that the case $\ell_1\oplus c_0$ is still open. It is not even known whether or not
$\mathcal{L}(\ell_1\oplus c_0)$ has infinite many closed subideals.
Johnson and Schechtman \cite{JohnsonSchechtman21} proved that the space $\mathcal{L}(L_p)$, $1<p<\infty$,
has even $2^{\mathfrak c}$  closed subideals, which is also, for set-theoretical reason, the largest possible.
Using an argument from \cite{JohnsonSchechtman21}, it was also possible
to show that  $\mathcal L(X)$ has $2^{\mathfrak c}$ many closed subideals, where $X=\ell_p\oplus \ell_q,\ell_p\oplus c_0, \ell_1\oplus \ell_p$, $\ell_p\oplus \ell_\infty$, for $1<p<q<\infty$ \cite{FreemanSchlumprechtZsak21}.
The  argument from \cite{JohnsonSchechtman21}, which we will also use, is as follows:
\begin{prop}\label{P:6.0} Assume that $(T_N: N\in[\mathbb N]^\omega)$ is a uniformly bounded family of operators on a Banach space $X$ which satisfies the following {\em tree condition }for some $c>0$:
\begin{equation} T_M\ \in\ \mathcal{J}^{T_{N}} \text{ if $M\subset N$, and } \mbox{dist}(T_M,\mathcal{J}^{T_N})\ \geqslant\ c\text{ if } M\setminus N\in [\mathbb N]^{\omega}.  \tag{T}
\end{equation}
Let $\mathcal{C}\subset [\mathbb N]^{\omega} $ be of cardinality $\mathfrak c$, consisting of pairwise almost disjoint sets
 (meaning  that $M\cap N$ is finite, for all $M,N\in \mathcal C$, $M\not=N$). Then $(\mathcal{J}^{\{T_N: N\in\mathcal A\}}: \mathcal {A}\subset \mathcal{C})$ is a family of pairwise distinct closed subideals. 
\end{prop}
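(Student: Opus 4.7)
My plan is to show that distinct subfamilies $\mathcal{A}_1 \neq \mathcal{A}_2$ of $\mathcal{C}$ produce distinct closed ideals by exhibiting an operator $T_M$ that lies in one but has positive distance to the other. So suppose, without loss of generality, that $M \in \mathcal{A}_1 \setminus \mathcal{A}_2$. Clearly $T_M \in \mathcal{J}^{\{T_N : N \in \mathcal{A}_1\}}$ since $T_M$ itself is one of the generators. The content of the proposition, then, is that $T_M \notin \mathcal{J}^{\{T_N : N \in \mathcal{A}_2\}}$, and more precisely that $\mathrm{dist}(T_M, \mathcal{J}^{\{T_N : N\in\mathcal A_2\}}) \geqslant c$.

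To see this, first reduce to finite generating sets: every element of $\mathcal{J}^{\{T_N : N \in \mathcal{A}_2\}}$ is the norm limit of finite sums of the form $S = \sum_{j=1}^n A_j T_{N_j} B_j$ with $N_1,\ldots,N_n \in \mathcal{A}_2$ and $A_j,B_j \in \mathcal{L}(X)$. Set $N^* := N_1 \cup \cdots \cup N_n \in [\mathbb N]^{\omega}$. Since each $N_j \subset N^*$, the first part of the tree condition yields $T_{N_j} \in \mathcal{J}^{T_{N^*}}$ for all $j$; consequently $S \in \mathcal{J}^{T_{N^*}}$.

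The decisive step invokes the almost-disjointness of $\mathcal{C}$. Because $M,N_1,\ldots,N_n$ are pairwise almost disjoint elements of $\mathcal{C}$ (or $M$ is almost disjoint from each $N_j\in\mathcal{A}_2\subset\mathcal{C}$, using $M \notin \{N_1,\ldots,N_n\}$), the intersection $M \cap N^* = \bigcup_{j=1}^n (M\cap N_j)$ is a finite union of finite sets, hence finite. Therefore $M \setminus N^*$ is cofinite in the infinite set $M$, so $M\setminus N^*\in[\mathbb N]^{\omega}$. The second part of the tree condition then gives $\mathrm{dist}(T_M, \mathcal{J}^{T_{N^*}}) \geqslant c$, and since $S \in \mathcal{J}^{T_{N^*}}$ we conclude $\|T_M - S\| \geqslant c$. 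This bound is uniform over all finite sums $S$ built from generators in $\mathcal{A}_2$, so passing to the norm closure, $\mathrm{dist}(T_M, \mathcal{J}^{\{T_N : N \in \mathcal{A}_2\}}) \geqslant c > 0$.

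I do not anticipate a serious obstacle: the whole proof is a careful unpacking of the definition of the generated ideal together with the two halves of the tree condition, plus the elementary set-theoretic observation that a finite union of sets almost disjoint from $M$ is itself almost disjoint from $M$. The only minor subtlety is ensuring that the bound $c$ survives taking the closure, which it does because the inequality $\|T_M - S\|\geqslant c$ holds on the dense set of all finite sums $S$, and the distance from a fixed point to a closed set equals its distance to any dense subset.
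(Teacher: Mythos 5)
Your proof is correct. Note first that the paper does not actually supply a proof of Proposition~\ref{P:6.0}; it states the result as an extraction of the argument from Johnson--Schechtman and cites it, so there is no internal proof to compare against. Your argument is the natural (and almost certainly the intended) one: reduce membership in $\mathcal{J}^{\{T_N : N\in\mathcal{A}_2\}}$ to approximation by finite sums $S=\sum_{j=1}^n A_jT_{N_j}B_j$, absorb all generators into the single set $N^*=\bigcup_j N_j$ via the first half of (T) and the ideal property, use almost-disjointness to get $M\setminus N^*\in[\mathbb N]^\omega$, and then apply the second half of (T) to conclude $\|T_M-S\|\geqslant c$ uniformly, which passes to the closure. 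The only tiny omission is the degenerate case $\mathcal{A}_2=\emptyset$ (or the empty sum $S=0$), where $N^*$ is not formed; but then $\|T_M\|\geqslant c>0$ follows directly from the second half of (T) applied to any $N$ with $M\setminus N$ infinite, so $T_M\neq 0$ and the conclusion still holds. You might mention this for completeness, but it does not affect the correctness of the main line of reasoning.
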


In unpublished work, Johnson \cite{Johnsonpersonal2020} showed that the space of operators on $T_1=T[\mathcal S_{1}, \theta]$ has $2^{\mathfrak c}$ 
closed ideals. 
We will here reproduce his arguments, and based on the results shown in the previous sections, we will deduce that his arguments extend to  $T[\mathcal S_{\xi}, \theta]$, for all $\xi<\omega_1$. We start by recalling a result by Johnson, which is based on \cite[Theorem A]{PelzynskiRosenthal74/75}.
 \begin{prop}{\rm \cite[Proposition 1]{Johnson76}}\label{P:6.1} For each $m\in \mathbb N$, there is a number  $L(m)\in \mathbb N$
 (the proof shows that one can take $L(m)=2^{2m^6}$) so that for every Banach space $X$ with a $ 1$ unconditional 
 basis $(e_j)$, the  following holds:
 
 For each  $m$-dimensional subspace $E$  of $X$, there exists an automorphism
 $S: X\to X$, with $\|S\|\cdot\|S^{-1}\|\leqslant2$, and pairwise disjoint supported vectors
 $(x_j)_{j=1}^{L(m)}$ in $X$ so that $S(E)\subset \text{span}(x_j: j=1,2,\ldots,L(m) )$.
 \end{prop}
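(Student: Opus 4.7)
The plan is to follow the classical argument of Pe{\l}czy{\'n}ski--Rosenthal \cite{PelzynskiRosenthal74/75}. First I would fix an Auerbach basis $(y_i)_{i=1}^m$ of $E$---that is, normalized vectors whose biorthogonal functionals $(y_i^*)_{i=1}^m\subset E^*$ also have norm $1$---so that every $y\in E$ may be written as $y=\sum_{i=1}^m y_i^*(y) y_i$ with $|y_i^*(y)|\leqslant \|y\|$. For each $j\in \mathbb N$, I form the \emph{coordinate profile vector}
$$v_j\ :=\ (e_j^*(y_1), e_j^*(y_2),\ldots, e_j^*(y_m))\ \in\ \mathbb R^m.$$
Every $y\in E$ then admits the expansion $y=\sum_j \langle v_j,\alpha(y)\rangle e_j$, where $\alpha(y):=(y_i^*(y))_{i=1}^m\in \mathbb R^m$ satisfies $\|\alpha(y)\|_\infty\leqslant \|y\|$.

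Next, I would group indices $j$ according to the approximate direction and magnitude of $v_j$: fix a convenient norm on $\mathbb R^m$, cover its unit sphere by cells of diameter at most some $\varepsilon=\varepsilon(m)$, and further stratify by magnitude using powers of $1+\varepsilon$. Ignoring indices with $v_j=0$ (they contribute nothing to $E$), this gives a partition $N_1,N_2,\ldots, N_{L(m)}$ of the support of $E$ such that within each block $N_k$ all $v_j$ are almost parallel to a common direction $u_k\in\mathbb R^m$ and of comparable magnitude. Standard volume estimates on the sphere show that $L(m)\leqslant (C/\varepsilon)^m$ cells suffice.

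Within each block $N_k$, the coordinate projection $P_{N_k}(E)$ of $E$ onto $\spann\{e_j\,:\,j\in N_k\}$ is therefore close to the one-dimensional line $L_k:=\mathbb{R}\cdot x_k$, where $x_k:=\sum_{j\in N_k}\langle v_j,u_k\rangle e_j$. Using the $1$-unconditionality of $(e_j)$, I would construct an automorphism $T_k$ of $\spann\{e_j\,:\,j\in N_k\}$ with $\|T_k\|\cdot \|T_k^{-1}\|\leqslant \sqrt{2}$ sending $P_{N_k}(E)$ exactly into $L_k$. Defining $S$ to act as $T_k$ on each block $\spann\{e_j\,:\,j\in N_k\}$ and as the identity on the remaining coordinates, $1$-unconditionality gives $\|S\|\cdot \|S^{-1}\|=\max_k \|T_k\|\cdot \|T_k^{-1}\|\leqslant 2$. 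The vectors $(x_k)_{k=1}^{L(m)}$ have pairwise disjoint supports by construction, and $S(E)\subset \bigoplus_{k=1}^{L(m)} L_k=\spann\{x_1,\ldots, x_{L(m)}\}$, as required.

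The main obstacle is the quantitative balance between the cell diameter $\varepsilon(m)$ and the blockwise distortion: $\varepsilon$ must be small enough for a finite-dimensional perturbation lemma in $\mathbb R^m$ to produce each $T_k$ with distortion at most $\sqrt{2}$, yet the resulting cell count $(C/\varepsilon)^m$ must not exceed $2^{2m^6}$. Optimizing these two constraints---and keeping careful track of how the $1$-unconditionality allows blockwise estimates to be glued together at no cost---yields the stated bound.
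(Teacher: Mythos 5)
The paper does not actually prove this proposition; it is quoted verbatim from Johnson (1976), who in turn builds on Pe{\l}czy\'nski--Rosenthal. So there is no ``paper's own proof'' to compare against; I will comment on the internal correctness of your sketch.

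The basic strategy --- encode $E$ via the profile vectors $v_j\in\mathbb R^m$, discretize their directions into finitely many cells, and collapse each block onto a single disjointly-supported vector --- is indeed the right one. But there are two genuine gaps.

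First, the magnitude stratification is both unnecessary and fatal as stated. The magnitudes $\|v_j\|$ range over $(0,\infty)$, so ``powers of $1+\varepsilon$'' yields infinitely many strata and hence no finite bound $L(m)$. In fact magnitudes are irrelevant: if every $v_j$ with $j\in N_k$ points \emph{exactly} in the direction $u_k$, say $v_j=c_ju_k$, then for $y\in E$ with $\alpha=\alpha(y)$ one gets $P_{N_k}(y)=\sum_{j\in N_k}\langle v_j,\alpha\rangle e_j=\langle u_k,\alpha\rangle\sum_{j\in N_k}c_je_j$, which is a scalar multiple of $x_k:=\sum_{j\in N_k}c_je_j$ regardless of the sizes of the $c_j$. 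Only the direction needs to be discretized, so a finite cover of the unit sphere of $\mathbb R^m$ by $\varepsilon$-cells suffices, and $L(m)\leqslant(C/\varepsilon)^m$.

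Second, and more seriously, the blockwise automorphism $T_k$ of $\spann\{e_j:j\in N_k\}$ that you want ``sending $P_{N_k}(E)$ exactly into $L_k$'' cannot exist in general. The subspace $P_{N_k}(E)$ generically has dimension $m$ (the error functionals $\alpha\mapsto\langle w_j,\alpha\rangle$, with $w_j$ the deviation of the direction of $v_j$ from $u_k$, are generically independent), while $L_k$ is one-dimensional; an automorphism preserves dimension, so no such $T_k$ exists once $m\geqslant 2$. The automorphism $S$ must be built \emph{globally}, not block by block. The correct move is: on $E$ define $Q(y)=\sum_k\langle u_k,\alpha(y)\rangle x_k\in F$. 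Using $1$-unconditionality, $\|\alpha(y)\|_\infty\leqslant\|y\|$, and the direction estimate $\|v_j/\|v_j\|-u_k\|\leqslant\varepsilon$, one shows $\|Q(y)-y\|\leqslant C m^2\varepsilon\|y\|$. For $\varepsilon$ small enough (still with $(C/\varepsilon)^m\leqslant 2^{2m^6}$) one may extend $Q-\mathrm{id}_E$ to an operator on $X$ of norm $<\tfrac12$ (using a projection onto $E$ with controlled norm, which is where the bound on $m$ enters), and then $S=\mathrm{id}_X+(\text{extension})$ is an automorphism with $\|S\|\cdot\|S^{-1}\|\leqslant 2$ satisfying $S(E)\subset F$. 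Your closing remark recognizes that quantitative tuning is needed, but the issue is not one of optimizing constants in a blockwise lemma; the blockwise construction itself has to be replaced by this global perturbation argument.
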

 \begin{cor}\label{C:6.2} Let  $\xi<\omega_1$, $0<\theta<1$, $X=T[\mathcal{S}_\xi, \theta]$, and $(e_j)$ be its unit basis.
 For any $m\in \mathbb N$ and any $m$-dimensional subspace $E$ of $X_{L(m)}=\overline{\text{span}(e_j: j\geqslant L(m))}$  (where $L(m)$ is as in
 Proposition  \ref{P:6.1}), there exists an $L(m)$-dimensional subspace $F$  of $X_{L(m)}$
  generated by a sequence  $(x_j)_{j=1}^{L(m)}$ of disjointly supported vectors, which is $\frac{\theta}{3}$ equivalent to the 
  $\ell_1^{L(m)}$-vector basis, and there exists an automorphism $S: X_{L(m)}\to X_{L(m)}$, with 
  $\|S^{-1}\|\cdot \|S\|\leqslant2$ and $S(E)\subset F$.
 \end{cor}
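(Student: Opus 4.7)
The strategy is a straightforward combination of Proposition \ref{P:6.1} with Corollary \ref{C:5.8}. The role of the cutoff $L(m)$ is twofold: it supplies the ambient space in which Proposition \ref{P:6.1} is to be applied, and more importantly, it forces the supports of the resulting disjointly supported vectors to start late enough so that the Schreier admissibility condition of Corollary \ref{C:5.8} is satisfied automatically.

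In detail, the plan is as follows. First I would apply Proposition \ref{P:6.1} to the space $X_{L(m)}=\overline{\mathrm{span}}(e_j:j\geqslant L(m))$, which is a Banach space with the $1$-unconditional basis $(e_j)_{j\geqslant L(m)}$, and to the given $m$-dimensional subspace $E\subset X_{L(m)}$. This yields an automorphism $S:X_{L(m)}\to X_{L(m)}$ with $\|S\|\cdot\|S^{-1}\|\leqslant 2$ and disjointly supported vectors $(x_j)_{j=1}^{L(m)}$ inside $X_{L(m)}$ with $S(E)\subset\mathrm{span}(x_j:j=1,\ldots,L(m))$. After discarding any $x_j=0$ and appending additional disjointly supported vectors with minima exceeding $\max_j\max\mathrm{supp}(x_j)$, I may assume all $x_j$ are nonzero; normalizing, I get a disjointly supported, normalized sequence of length exactly $L(m)$ inside $X_{L(m)}$.

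The key check is that $(x_j)_{j=1}^{L(m)}$ is $\mathcal{S}_\xi$-allowable. Since each $x_j$ is supported on $\{L(m),L(m)+1,\ldots\}$, the set $\{\min\mathrm{supp}(x_j):j=1,\ldots,L(m)\}$ has cardinality $L(m)$ and minimum at least $L(m)$; thus it lies in $\mathcal{S}_1$, and since $\mathcal{S}_1\subset\mathcal{S}_\xi$ (noted in the paragraph following \eqref{e12}), it lies in $\mathcal{S}_\xi$. As the supports are pairwise disjoint, the sequence is $\mathcal{S}_\xi$-allowable. Now Corollary \ref{C:5.8} immediately gives that $(x_j)_{j=1}^{L(m)}$ is $\tfrac{\theta}{3}$-equivalent to the $\ell_1^{L(m)}$-unit vector basis. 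Setting $F=\mathrm{span}(x_j:j=1,\ldots,L(m))$ completes the argument.

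There is no real obstacle here beyond bookkeeping. The only mild subtlety is ensuring that after normalization and (if needed) extension the sequence still has length $L(m)$ and still satisfies the admissibility condition, but both are automatic because new vectors added beyond $\max_j\max\mathrm{supp}(x_j)$ only increase the minima, preserving the $\mathcal{S}_1$ (and hence $\mathcal{S}_\xi$) membership of the set of minima.
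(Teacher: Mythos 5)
Your proposal is correct and follows essentially the same route as the paper: the paper's own proof is a single sentence (apply Proposition \ref{P:6.1} to $X_{L(m)}$, then invoke Corollary \ref{C:5.8} together with $\mathcal{S}_1\subset\mathcal{S}_\xi$), and you have simply spelled out the bookkeeping---discarding zero vectors, padding out to length $L(m)$, normalizing, and checking that $L(m)$ disjointly supported vectors with minima $\geqslant L(m)$ form an $\mathcal{S}_1$-allowable (hence $\mathcal{S}_\xi$-allowable) family.
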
 
 \begin{proof} We apply Proposition  \ref{P:6.1} to the space $X_{L(m)}$. Then our claim follows from Corollary  \ref{C:5.8} and our previous observation  
  that $\mathcal{S}_1\subset \mathcal{S_\xi}$.
 \end{proof}
 We are now ready to state the main result of this section.
\begin{thm} {\rm  \cite{Johnsonpersonal2020} } Assume that $X$ is a Banach space with a $1$-unconditional basis $(e_j)$ 
which is not equivalent to the unit vector basis of $\ell_1$ and satisfies the following condition for some $c>0$ and some sequence $(L(m))_{m=1}^\infty\subset \mathbb N $:

For any $m\in \mathbb N$ and any $m$-dimensional subspace $E$ of $X_{L(m)}=\overline{\text{span}(e_j: j\geqslant L(m))}$,  there exists a finite-dimensional subspace $F$  of $X_{L(m)}$
  generated by a basis  $(f_j)_{j=1}^{L(m)}$, which is $c$-equivalent to the 
  $\ell_1^{L(m)}$-unit basis, and there exists an automorphism $S: X_{L(m)}\to X_{L(m)}$, with 
  $\|S^{-1}\|\cdot \|S\|\leqslant2$ and $S(E)\subset F$.

 Then there exists a family $(T_N: N\in [\mathbb N]^\omega)$
of spatial projections ($T_N$ is a projection on some subspace of $X$ generated by a subsequence of $(e_j)$), which satisfies 
(T).
\end{thm}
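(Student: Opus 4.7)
My plan is to construct $T_N$ as a spatial projection onto an index-block-determined subset of the basis, and then verify the two halves of condition (T) separately. First, partition $\mathbb N$ into consecutive intervals $I_1<I_2<\cdots$ with lengths chosen so that $\{j:j\in I_k\}\in\mathcal S_1$ (i.e.\ $|I_k|\leqslant \min I_k$) and $\min I_k\geqslant L(|I_k|)$ (so $\overline{\mathrm{span}}(e_j:j\in I_k)\subset X_{L(|I_k|)}$). For $N\in[\mathbb N]^\omega$, set $M_N:=\bigcup_{k\in N}I_k$ and let $T_N$ be the basis projection onto $\overline{\mathrm{span}}(e_j:j\in M_N)$. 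By $1$-unconditionality, $\|T_N\|=1$, so the family is uniformly bounded.

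The ``small-to-big'' half of (T) is immediate: if $M\subset N$, then $M_M\subset M_N$, hence $T_M\circ T_N=T_M$ and so $T_M\in\mathcal J^{T_N}$.

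For the ``far-apart'' half, assume $M\setminus N$ is infinite, and fix a pre-ideal element $S=\sum_{l=1}^r A_l T_N B_l$. The strategy is to produce a universal constant $c'>0$ (depending only on $c$) and, for each such $S$, a unit vector $v\in X$ witnessing $\|(T_M-S)v\|\geqslant c'$. Two key structural facts will be used: (i) for every $i\in M\setminus N$, $T_M|_{E_i}=\mathrm{id}$ and $T_N|_{E_i}=0$, where $E_i:=\overline{\mathrm{span}}(e_j:j\in I_i)$ (this holds because $I_i\subset M_M$ and $I_i\cap M_N=\emptyset$); and (ii) $E_i$ is uniformly $c$-equivalent to the $\ell_1^{|I_i|}$-unit basis, by the hypothesis applied inside $X_{L(|I_i|)}$ (in the Tsirelson application this is exactly Corollary \ref{C:5.8}). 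Thus $S|_{E_i}=\sum_{l}A_l\circ(T_N B_l|_{E_i})$ would need to approximate the $\ell_1^{|I_i|}$-style embedding of $E_i$ into $X$, while each summand factors through $\mathrm{Im}(T_N)$.

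The main obstacle is converting this structural constraint into a quantitative lower bound $c'$ that is independent of $S$. My plan is the following: for the fixed $S$, choose $i\in M\setminus N$ so large that $|I_i|$ dominates $r$ and the norms $\|A_l\|,\|B_l\|$; use the hypothesis to pass $E_i$, via an automorphism $U$ of $X_{L(|I_i|)}$ with $\|U\|\|U^{-1}\|\leqslant 2$, into an $\ell_1^{L(|I_i|)}$-like block $F_i=\mathrm{span}(f_j)_{j=1}^{L(|I_i|)}$; then exploit the hypothesis that $(e_j)$ is not equivalent to the $\ell_1$-basis --- which prevents the finite set of vectors $\{T_N B_l U^{-1}f_j\}_{j,l}$, post-composed with the $A_l$'s, from uniformly reproducing the $\ell_1^{L(|I_i|)}$-structure of $F_i$ --- to locate, by an averaging/Ramsey argument along $(f_j)$, a unit vector $v\in E_i$ for which $\|T_N B_l v\|$ is simultaneously small in $l=1,\dots,r$. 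Then $\|Sv\|\leqslant\sum_l\|A_l\|\,\|T_N B_l v\|$ is negligible while $\|T_M v\|=1$, yielding $\|T_M-S\|\geqslant c'$. The uniformity of $c'$ across all pre-ideal $S$ is the delicate point: it should follow from the fact that $|I_i|$ can be chosen as large as needed for each particular $S$, combined with the uniform $\ell_1^m$-equivalence constant $c$ supplied by the hypothesis, and finally from passing to the closure $\mathcal J^{T_N}$.
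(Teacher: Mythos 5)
Your construction of $T_N$ and the ``small-to-big'' half of (T) are fine, but the ``far-apart'' half contains a genuine gap, and the missing ingredient is precisely the point of the theorem. You never invoke the hypothesis that $(e_j)$ is \emph{not} equivalent to the $\ell_1$-basis in any quantitative way: your interval lengths are chosen only to satisfy $\min I_k\geqslant L(|I_k|)$ (plus an $\mathcal S_1$-condition that belongs to the Tsirelson application, not the general theorem), whereas the paper chooses $n_k$ so that there exist convex coefficients $(p_j^{(k)})_{j=m_k}^{n_k}$ with $\sum_j p_j^{(k)}=1$ and $\|\sum_j p_j^{(k)}e_j\|<1/(km_k)$. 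Without building these ``flat'' convex combinations into the construction, there is nothing to drive the estimate.

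Your plan to find, for each pre-ideal element $S=\sum_l A_l T_N B_l$, a single unit vector $v\in E_i$ with $\|T_NB_lv\|$ simultaneously small in $l$ is not justified: the operators $T_NB_l$ restricted to $E_i$ could a priori be bounded below uniformly, and ``the basis is not $\ell_1$'' does not produce a near-kernel vector inside a prescribed finite block. The ``averaging/Ramsey argument'' is invoked but not carried out, and I do not see a way to carry it out along these lines. The paper instead works with the trace-type functionals $\Psi_k(S)=\sum_{j=m_k}^{n_k}p_j^{(k)}e_j^*(Se_j)$, which have norm one, satisfy $\Psi_k(T_M)=1$ for $k\in M$, and are shown to tend to $0$ on $A T_N B$ along $k\in M\setminus N$; this is an averaging over the diagonal of $S$, not an evaluation at a single test vector. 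The ``high-frequency'' part of the estimate, $\Sigma_2$, is exactly where the $L(m)$-hypothesis enters: one pushes $PB(\mathrm{span}(e_j:m_k\leqslant j\leqslant n_k))$ into an $\ell_1^{\dim F}$-block via the automorphism, then sums $\sum_s\sum_j p_j^{(k)}|\widetilde B_s(e_j)|$ and uses unconditionality and the $\ell_1$-equivalence of $F$ to bound this by a multiple of $\|\sum_j p_j^{(k)}e_j\|$, which is small by construction. Your sketch uses the $\ell_1$-structure of $E_i$ in the wrong direction (to make the blocks of $X$ look like $\ell_1$), whereas the proof uses the $L(m)$-hypothesis to make the \emph{image} $PB(E_i)$ behave like $\ell_1$ so that the small convex combinations can be exploited. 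To repair the proposal you would need to (a) add the convex-combination requirement to the choice of intervals, (b) replace the test-vector strategy with the functionals $\Psi_k$, and (c) redo the estimate using unconditionality and the $\ell_1$-equivalence of the target subspace $F$ rather than of the source block $E_i$.
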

 
 \begin{proof} Using on the one hand that $(e_j)$ is not equivalent to the unit vector basis of $ \ell_1$,
 on the other hand the sequence $(L(m))$, we 
 choose recursively  natural  numbers $m_1<n_1< m_2< n_2<\cdots $ as follows:
 
 We let $m_1=1$, and assuming that $m_k\in \mathbb N$ has been chosen, we choose $n_k>m_k$ so that 
 there exist numbers  $(p_j^{(k)})_{j=m_k}^{n_k}\subset [0,1]$ so that 
 $\sum_{j=m_k}^{n_k} p_j^{(k)}=1$ and 
 \begin{equation}\left\|\sum_{j=m_k}^{n_k} p^{(k)}_j e_j\right\|\ <\ \frac1{k m_{k}},
 \end{equation} 
 and assuming $n_k\in \mathbb N$ has been chosen, we choose $m_{k+1}=L(n_k)$.
 
For $N\in[\mathbb N]^{\omega}$, we define  $T_N$ to be the projection onto the subspace of $X$ generated by $(e_i: i\in \bigcup_{j\in N} [m_j,n_j])$
 and claim that the so defined  family $(T_N:N\in [\mathbb N]^{\omega})$ satisfies (T).
 The first condition in (T) is trivially satisfied. To verify the second condition, we define for $k\in\mathbb N$ the following functional $\Psi_k\in \mathcal{L}^*(X)$:
 $$\Psi_k(S)\ =\ \sum_{j=m_k}^{n_k} p^{(k)}_j e^*_j(S(e_j)), \text{ for }S\in \mathcal{L}(X).$$
 Clearly,  $\|\Psi_k\|_{\mathcal{L}^*(X)}=1$, and if  $N\in [\mathbb N]^{\omega}$ and $k\in N$, then $\Psi_k(T_N)=1$.
 It is therefore enough to show that for $A,B\in \mathcal{L}(X)$, $\|A\|,\|B\|\leqslant1$, and $M,N\in [\mathbb N]^{\omega}$, with 
 $M\setminus N\in[\mathbb N]^{\omega}$, we have 
 \begin{equation}\label{E:6.2}\lim_{k\to \infty, k\in M\setminus N} \Psi_k(A\circ T_N\circ B)\ =\ 0\end{equation}
 We compute for $k\not\in N$ that
 \begin{align*} \big| \Psi_k(A\circ T_N\circ B)\big|&\ \leqslant\ \sum_{j=m_k}^{n_k} p^{(k)}_j  | e^*_j (AT_NB)(e_j)|\\
 &\ \leqslant\ \sum_{j=m_k}^{n_k} p^{(k)}_j  \|T_NB(e_j)\|\\
 &\ \leqslant\ \underbrace{ \sum_{s=1}^{n_{k-1}} \sum_{j=m_k}^{n_k} p^{(k)}_j  |B_s(e_j)|}_{=\ \Sigma_1}+ 
 \underbrace{\sum_{j=m_k}^{n_k} p^{(k)}_j  \| P\circ B(e_j)\|}_{=\ \Sigma_2},
 \end{align*} 
 where $B_s= e^*_s\circ B\in X^*$, with $\| B_s\|_{X^*}\leqslant1$, for $s\in\mathbb N$, and 
 $P: X\to X_{m_{k+1}}$ is the basis  projection onto $X_{m_{k+1}} =\overline{\mbox{span}(e_j: j\geqslant m_{k+1})}$.
 
To estimate $\Sigma_1$, we observe that 
\begin{align}\label{E:6.3}\Sigma_1\ =\ \sum_{s=1}^{n_{k-1}} B_s\left(\sum_{j=m_{k}}^{n_{k}} p^{(k)}_j \mbox{sign}(B_s(e_i))e_j\right)
\ \leqslant\ \sum_{s=1}^{n_{k-1}} \left\|\sum_{j=m_{k}}^{n_{k}} p^{(k)}_j \mbox{sign}(B_s(e_i))e_j\right\|\ \leqslant\ \frac{1}{k}.
\end{align}

 To estimate $\Sigma_2$, we consider the  space $E=P\circ B(\mbox{span}(e_j: m_k\leqslant j\leqslant n_k))$
which is a subspace of $X_{m_{k+1}}=X_{L(n_k)}$  and has dimension not larger than $n_k$.
 Therefore, we  can find a  finite-dimensional subspace $F$ of $X_{L(n_k)}$, with a basis $(f_{j})_{j=1}^{\mbox{dim} F}$ which is $c$-equivalent to the $\ell_1^{\mbox{dim} F}$-unit basis, and an automorphism $S$ on  $X_{m_{k+1}}$,
for which $S(E)\subset F$, and $\|S\|\cdot\|S^{-1}\|\leqslant2$. We have
\begin{align*}\Sigma_2&\ \leqslant\ 2 \sum_{j=m_k}^{n_k} p^{(k)}_j \|SPB(e_j)\|\\
&\ \leqslant\ 2\sum_{j=m_k}^{n_k} p^{(k)}_j \sum_{s=1}^{\dim(F)} |\widetilde B_s(e_j)|
\intertext{where $\widetilde B_s(x)=f^*_s(SPB(x))$, for $s=1,2, \ldots, \mbox{dim} F$, $x\in X$, $(f^*_j)_{j=1}^{\mbox{dim} F}$ being the 
coordinate functionals in $F$,}
&\ =\ 2\sum_{s=1}^{\dim(F)} \sum_{j=m_k}^{n_k} p^{(k)}_j |\widetilde B_s(e_j)|\\
&\ =\ 2\sum_{s=1}^{\dim(F)} \widetilde B_s\left( \sum_{j=m_k}^{n_k} p^{(k)}_j \mbox{sign}(\widetilde B_s(e_j))e_j\right)\\
&\ \leqslant\ 2c \left\|\sum_{s=1}^{\dim(F)} \widetilde B_s\left( \sum_{j=m_k}^{n_k} p^{(k)}_j \mbox{sign}(\widetilde B_s(e_j))e_j\right)f_s\right\|_F\\
&\ =\ 2c \left\| SPB\left(\sum_{j=m_k}^{n_k} p^{(k)}_j \mbox{sign}(\widetilde B_s(e_j))e_j\right)\right\|_F\\
&\ \leqslant\ 4c\left\|\sum_{j=m_k}^{n_k} p^{(k)}_j e_j\right\|\ \leqslant\ \frac{ 4c}{k}.
\end{align*} 
This, together with \eqref{E:6.3}, implies our claim \eqref{E:6.2}.
 \end{proof}

\end{document}